\documentclass[11pt]{article}
\usepackage[latin1]{inputenc}
\usepackage{amsmath,amsthm,amssymb}
\usepackage{amsfonts}
\usepackage{amsmath,amsthm,amssymb,amscd}
\usepackage{latexsym}
\usepackage{color}
\usepackage{graphicx}
\usepackage{mathrsfs}
\usepackage{cite}

\usepackage{color,enumitem,graphicx}
\usepackage[colorlinks=true,urlcolor=black,
citecolor=black,linkcolor=black,linktocpage,pdfpagelabels,
bookmarksnumbered,bookmarksopen]{hyperref}



\textwidth172mm \textheight22cm \hoffset-24mm \voffset-20mm

\makeatletter \@addtoreset{equation}{section} \makeatother

\setlength{\parindent}{1em}

\newtheorem{theorem}{Theorem}[section]

\newtheorem{proposition}{Proposition}[section]
\newtheorem{lemma}{Lemma}[section]
\newtheorem{remark}{Remark}[section]

\allowdisplaybreaks

\begin{document}
\title{Blowing-up solutions for the Choquard type Brezis-Nirenberg problem in dimension three}

\author{Wenjing Chen\footnote{Corresponding author.}\ \footnote{E-mail address:\, {\tt wjchen@swu.edu.cn} (W. Chen), {\tt zxwangmath@163.com} (Z. Wang).}\  \ and Zexi Wang\\
\footnotesize  School of Mathematics and Statistics, Southwest University,
Chongqing, 400715, P.R. China}

\date{ }
\maketitle

\begin{abstract}
{ In this paper, we are interested in the existence of solutions for the
following Choquard type Brezis-Nirenberg problem
\begin{align*}
 \left\{
  \begin{array}{ll}
  -\Delta u=\displaystyle\Big(\int\limits_{\Omega}\frac{u^{6-\alpha}(y)}{|x-y|^\alpha}dy\Big)u^{5-\alpha}+\lambda u,
  \ \  &\mbox{in}\ \Omega,\\
  u=0,
  \ \  &\mbox{on}\ \partial \Omega,
    \end{array}
    \right.
  \end{align*}
where $\Omega$ is a smooth bounded domain in $\mathbb{R}^3$,  $\alpha\in (0,3)$, $6-\alpha$ is the upper critical exponent in the sense of the Hardy-Littlewood-Sobolev inequality, and $\lambda$ is a real positive parameter.
By applying the reduction argument, we find and characterize a positive value $\lambda_0$ such that if $\lambda-\lambda_0>0$ is small enough, then the above problem admits a solution, which blows up and concentrates at the critical point of the Robin function as $\lambda\rightarrow \lambda_0$. Moreover, we consider the above problem under zero Neumann boundary condition.}

\smallskip
\emph{\bf Keywords:} Blowing-up solutions; Critical Choquard equation; Reduction argument; Robin function.

\emph{\bf 2020 Mathematics Subject Classification:}  35B33, 35B40, 35J15, 35J61.

\end{abstract}

\section{Introduction}
In this article, we consider the following Choquard type Brezis-Nirenberg problem
\begin{align}\label{propro}
 \left\{
  \begin{array}{ll}
  -\Delta u=\displaystyle\Big(\int\limits_{\Omega}\frac{u^{6-\alpha}(y)}{|x-y|^\alpha}dy\Big)u^{5-\alpha}+\lambda u,
  \ \  &\mbox{in}\ \Omega,\\
  u=0,
  \ \  &\mbox{on}\ \partial \Omega,
    \end{array}
    \right.
  \end{align}
where  $\Omega$ is a smooth bounded domain in $\mathbb{R}^3$, $\alpha\in (0,3)$, $6-\alpha$ is the upper critical exponent in the sense of the Hardy-Littlewood-Sobolev inequality, and $\lambda$ is a real positive parameter.

In the classical paper \cite{BN}, Brezis and Nirenberg considered the following problem
\begin{align}\label{cla}
 \left\{
  \begin{array}{ll}
  -\Delta u=|u|^{2^*-2}u+\lambda u,
  \ \  &\mbox{in}\ \Omega,\\
  u=0,
  \ \  &\mbox{on}\ \partial \Omega,
    \end{array}
    \right.
  \end{align}
where $\Omega$ is a smooth bounded domain in $\mathbb{R}^N$, $N\geq 3$, $2^*=\frac{2N}{N-2}$, and $\lambda>0$ is a parameter. They proved that: if $N\geq4$, problem \eqref{cla} has a solution with minimal energy for all $\lambda\in (0,\lambda_1)$, where $\lambda_1$ is the first eigenvalue of $-\Delta$ with Dirichlet boundary condition; when $N=3$, there exists $\lambda_*\in (0,\lambda_1)$ such that \eqref{cla} has a solution with minimal energy for any $\lambda\in (\lambda_*,\lambda_1)$, and no solution with minimal energy exists for $\lambda\in (0,\lambda_*)$.
Furthermore, if $\Omega$ is a ball in $\mathbb{R}^3$, then $\lambda_*=\frac{\lambda_1}{4}$, and problem \eqref{cla} has a solution if and only if $\lambda\in (\frac{\lambda_1}{4},\lambda_1)$. The classical Poho\u{z}aev identity \cite{Po} guarantees that problem \eqref{cla} with $\lambda\leq 0$ has no solution if $\Omega$ is a star-shaped domain. In \cite{D}, Druet also showed that when $\lambda=\lambda_*$, there is no solution with minimal energy for \eqref{cla} in dimension three, which implies that $\lambda_*$ can be characterized as the critical value such that solutions of \eqref{cla} with minimal energy exist if and only if $\lambda\in (\lambda_*,\lambda_1)$. For more investigations about \eqref{cla}, we can see \cite{CFP,CSS,CW,SZ} and references therein.

In dimension three, $\lambda_*$ can be characterized by the  Robin function $g_\lambda$ defined as follows. Let $\lambda\in (0,\lambda_1)$, for any given $x\in \Omega$, consider the Green function $G_\lambda(x,y)$, solution of
\begin{align*}
 \left\{
  \begin{array}{ll}
  -\Delta_y G_\lambda(x,y)-\lambda G_\lambda(x,y)=\delta(x-y),
  \ \  &\mbox{$y\in \Omega$},\\
  G_\lambda(x,y)=0,
  \ \  &\mbox{$y\in \partial \Omega$},
    \end{array}
    \right.
  \end{align*}
where $\delta(x)$ denotes the Dirac measure at the origin. Let $H_\lambda(x,y)=\Gamma(x-y)-G_\lambda(x,y)$ with $\Gamma(z)=\frac{1}{4\pi |z|}$, be its regular part, i.e.,
$H_\lambda(x,y)$ is the unique solution of the following problem
\begin{align*}
 \left\{
  \begin{array}{ll}
  -\Delta_y H_\lambda(x,y)-\lambda H_\lambda(x,y)=-\lambda \Gamma(x-y),
  \ \  &\mbox{$y\in \Omega$},\\
  H_\lambda(x,y)=\Gamma(x-y),
  \ \  &\mbox{$y\in \partial \Omega$}.
    \end{array}
    \right.
  \end{align*}
Let us define the Robin function of $G_\lambda$ as
\begin{equation*}
  g_\lambda(x)=H_\lambda(x,x).
\end{equation*}
It follows from \cite[Lemmas A.1, A.2]{dDM} that  $g_\lambda(x)$ is a smooth function which goes to $+\infty$ as $x$ approaches to $\partial \Omega$. The minimum of $g_\lambda$ in $\Omega$ is strictly decreasing in $\lambda$, is strictly positive when $\lambda$ is close to $0$ and approaches $-\infty$ as $\lambda\rightarrow\lambda_1$.
It was conjectured in \cite{B} and proved by Druet \cite{D} that $\lambda_*$ is the largest $\lambda\in (0,\lambda_1)$ such that $\min\limits_{\Omega}g_\lambda>0$.

In the last decades, a lot of attention has been focused on the study
of
the blowing-up analysis of solutions for \eqref{cla}. On the one hand, when $N\geq4$, Rey \cite{R1} (independently and using different arguments, by Han \cite{H}) proved that if $u_\lambda$ is a solution of \eqref{cla} and satisfies $|\nabla u_\lambda|^2\rightarrow S^{\frac{N}{2}}\delta(x-x_0)$ as $\lambda\rightarrow0$, then $x_0\in \Omega$ is a critical point of the Robin function $g(x)$, where $S$ is the best Sobolev constant defined by
\begin{equation*}
   S:=\inf\limits_{u \in D^{1,2}(\mathbb{R}^N)\backslash\{0\}}\frac{\displaystyle\int_{\mathbb{R}^N}|\nabla u|^2dx}{\displaystyle\Big(\int_{\mathbb{R}^N}|u|^{2^*}dx\Big)^{\frac{1}{2^*}}}.
\end{equation*}
Here, $g(x)=H(x,x)$, $x\in \Omega$, and $H(x,y)$ is the regular part of the Green function $G(x,y)$ of
\begin{align*}
 \left\{
  \begin{array}{ll}
  -\Delta_y G(x,y)=\delta(x-y),
  \ \  &\mbox{$y\in \Omega$},\\
  G(x,y)=0,
  \ \  &\mbox{$y\in \partial \Omega$},
    \end{array}
    \right.
  \end{align*}
i.e., $H(x,y)=\Gamma(x-y)-G(x,y)$.
On the other hand, if $N\geq5$, by applying the reduction argument, Rey \cite{R1} showed that for any non-degenerate critical point of the Robin function $g(x)$, there exists a solution of \eqref{cla} that blows up and concentrates at this point as $\lambda\rightarrow 0$.
 Musso and Pistoia \cite{MP} also constructed multiple blowing-up solutions for \eqref{cla} as $\lambda\rightarrow 0$. When $N=3$, 
 del Pino et al. \cite{dDM} proved that: if there exists $\lambda_0\in (0,\lambda_1)$ and $\xi_0\in \Omega$ such that
$\xi_0$ is a local minimizer or a non-degenerate critical point of $g_{\lambda_0}$ with value $0$, then for any $\lambda>\lambda_0$ sufficiently close to $\lambda_0$,
problem \eqref{cla} admits a blowing-up solution. Moreover, multiple blowing-up solutions for \eqref{cla} have been established by Musso and Salazar \cite{MS2}. For
more related results, we refer the readers to \cite{CLP,IV,V,MP',IV1}  and references therein.

Now, we return to the following Choquard type problem
\begin{align}\label{Hcla}
 \left\{
  \begin{array}{ll}
  -\Delta u=\displaystyle\Big(\int_{\Omega}\frac{|u(y)|^{2_\alpha^*}}{|x-y|^\alpha}dy\Big)|u|^{2_\alpha^*-2}u+\lambda u,
  \ \  &\mbox{in}\ \Omega,\\
  u=0,
  \ \  &\mbox{on}\ \partial \Omega,
    \end{array}
    \right.
  \end{align}
where $\Omega$ is a smooth bounded domain in $\mathbb{R}^N$, $N\geq 3$, $\alpha\in (0,N)$, $\lambda>0$ is a parameter, and  $2_\alpha^*=\frac{2N-\alpha}{N-2}$ is the upper critical exponent in the sense of the Hardy-Littlewood-Sobolev inequality (see Proposition \ref{HL}). Equation \eqref{Hcla} is closely related to the following nonlinear Choquard equation
\begin{equation}\label{back}
  -\Delta u+V(x)u=\displaystyle\Big(\int_{\mathbb{R}^N}\frac{|u(y)|^{p}}{|x-y|^\alpha}dy\Big)|u|^{p-2}u,
  \ \  \mbox{in}\ \mathbb{R}^N.
\end{equation}
For the case $N=3$, $\alpha=1$, $p=2$, and $V=1$, it goes back to the description of the quantum theory of a polaron at rest by Peker \cite{Pekar} and the modeling of an electron trapped in its own hole in the work of Choquard. See also \cite{MPT} for more physical background of \eqref{back}.

In recent years, much attention has been paid to study \eqref{back}, see e.g. \cite{MS1,MS2',MS3,MS4,MZ,GY} and references therein. In particular,  when $V(x)=1$, Moroz and Van Schaftingen \cite{MS2'} studied the positivity, regularity, decay behavior and radial symmetry of ground state solutions for \eqref{back}.
Meanwhile, they proved that \eqref{back} has no nontrivial solution for either $ \frac{1}{p}\leq\frac{N-2}{2N-\alpha}$ or $ \frac{1}{p}\geq\frac{N}{2N-\alpha}$ by using the Poho\u zaev identity. The number $\frac{2N-\alpha}{N}$ and $\frac{2N-\alpha}{N-2}$ (if $N\geq3$) are called the lower and upper critical exponents related to the Hardy-Littlewood-Sobolev inequality respectively.
Gao and Yang \cite{GY} studied the existence of solutions for \eqref{Hcla} and proved that: if $N\geq4$, problem \eqref{Hcla} has a solution for any $\lambda>0$; when $N=3$, there exists $\lambda^*$ such that \eqref{Hcla} has a solution for any $\lambda>\lambda^*$, where $\lambda$ is not an eigenvalue of $-\Delta$ with Dirichlet boundary condition; if $\lambda\leq0$ and $\Omega$ is a star-shaped domain, then \eqref{Hcla} admits no solution.

In \cite{YZ}, Yang and Zhao  first analyzed the blowing-up behaviour of solutions for \eqref{Hcla}, they proved that if $u_\lambda$ is a solution of \eqref{Hcla} and satisfies $|\nabla u_\lambda|^2\rightarrow S_{H,L}^{\frac{2N-\alpha}{N+2-\alpha}}\delta(x-x_0)$ as $\lambda\rightarrow0$, then $x_0\in \Omega$ is a critical point of the Robin function $g(x)$, where $N\geq4$, $S_{H,L}$ is the best Sobolev constant defined by
\begin{equation*}
   S_{H,L}:=\inf\limits_{u \in D^{1,2}(\mathbb{R}^N)\backslash\{0\}}\frac{\displaystyle\int_{\mathbb{R}^N}|\nabla u|^2dx}{\displaystyle\Big(\iint_{\mathbb{R}^{2N}}
    \frac{|u(y)|^{2_\alpha^*}|u(x)|^{2_\alpha^*}}{|x-y|^\alpha}dxdy\Big)^{\frac{1}{2_\alpha^*}}}.
\end{equation*}
Moreover, Yang et al. \cite{YYZ} provided a converse result for \cite{YZ} and obtained a solution
that blows up and concentrates at the critical point of the Robin function $g(x)$ under some suitable assumptions, if $\lambda\rightarrow 0$ and $N\geq5$.
For more related results of \eqref{Hcla}, the readers may refer to \cite{GMYZ,SYZ,YGRY,ZYY} and references therein.

Motivated by the results already mentioned above, especially \cite{dDM} and \cite{YYZ}, it is natural to ask that,
{\em does problem \eqref{Hcla} has a blowing-up solution in dimension three?} In this paper, we give an affirmative answer for this, and
our first result states as follows.
\begin{theorem}\label{th}
Assume that for a number $\lambda_0>0$, one of the following two situations holds.

$(a)$ There is an open subset $\mathfrak{D}$ of $\Omega$ such that
\begin{equation*}
  0=\inf\limits_{\mathfrak{D}}g_{\lambda_0}<\inf\limits_{\partial\mathfrak{D}}g_{\lambda_0}.
\end{equation*}

$(b)$ There is a point $\xi_0\in \Omega$ such that $g_{\lambda_0}(\xi_0)=0$, $\nabla g_{\lambda_0}(\xi_0)=0$ and $D^2g_{\lambda_0}(\xi_0)=0$ is non-singular.
\\Then for all $\lambda>\lambda_0$ sufficiently close to $\lambda_0$, there exists a solution $u_\lambda$ of problem \eqref{propro} of the form:
\begin{equation*}
  u_\lambda(x)=3^{1/4}\Big(\frac{\mu_\lambda}{\mu_\lambda^2+|x-\xi_\lambda|^2}\Big)^{1/2}+O(\mu_\lambda^{1/2}), \quad \mu_\lambda=-\gamma\frac{g_\lambda(\xi_\lambda)}{\lambda}>0,
\end{equation*}
for some $\gamma>0$. Here we have $\xi_\lambda\in \mathfrak{D}$ if case $(a)$ holds and $\xi_\lambda\rightarrow\xi_0$ as $\lambda\rightarrow \lambda_0$ if $(b)$ holds. Moreover, for some positive numbers $\beta_1,\beta_2$, we have
\begin{equation*}
  \beta_1(\lambda-\lambda_0)\leq -g_\lambda(\xi_\lambda)\leq \beta_2(\lambda-\lambda_0).
\end{equation*}
\end{theorem}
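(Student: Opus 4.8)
The plan is to use the Lyapunov–Schmidt reduction, following the scheme of del Pino–Dol­beault–Musso \cite{dDM} for the local problem \eqref{cla} and adapting it to the nonlocal nonlinearity. First I would fix the approximate solution. For $\mu>0$ and $\xi\in\Omega$ define the standard bubble $U_{\mu,\xi}(x)=3^{1/4}\bigl(\mu/(\mu^2+|x-\xi|^2)\bigr)^{1/2}$, the Aubin–Talenti instanton that solves $-\Delta U=\bigl(\int_{\R^3}U^{6-\alpha}|x-y|^{-\alpha}dy\bigr)U^{5-\alpha}$ on $\R^3$ (this is the extremal for $S_{H,L}$ in dimension three). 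Its projection $PU_{\mu,\xi}$ onto $H_0^1(\Omega)$ is defined by $-\Delta PU_{\mu,\xi}=-\Delta U_{\mu,\xi}$ in $\Omega$, $PU_{\mu,\xi}=0$ on $\partial\Omega$, so that $PU_{\mu,\xi}=U_{\mu,\xi}-\mu^{1/2}3^{1/4}4\pi H_\lambda(x,\xi)+o(\mu^{1/2})$ in the appropriate sense; here the $\lambda$-dependent regular part $H_\lambda$ enters because we rewrite the equation as $-\Delta u-\lambda u=(\cdots)u^{5-\alpha}$ and use the Green function $G_\lambda$ of $-\Delta-\lambda$. Then I would look for a solution of the form $u=PU_{\mu,\xi}+\phi$ with $\phi$ small and orthogonal (in the $H_0^1$ inner product twisted by the linearization) to the kernel of the linearized operator at $U$, which is spanned by $Z_0=\partial_\mu U_{\mu,\xi}$ and $Z_i=\partial_{\xi_i}U_{\mu,\xi}$, $i=1,2,3$, together with their projections $PZ_j$.

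The core analytic steps are: (i) establish the linear theory — invertibility, with norm bounds uniform in $\mu,\xi$ in the relevant range, of the projected linearized operator $L_{\mu,\xi}$ acting on the orthogonal complement; this rests on the non-degeneracy of $U$ as a solution of the limiting Choquard equation (the kernel is exactly the $(N+1)$-dimensional space above — a fact available from the literature on the critical Choquard problem), plus a standard blow-up/contradiction argument. (ii) Solve the auxiliary (nonlinear) equation for $\phi=\phi_{\mu,\xi}$ by a contraction mapping argument, using the estimate on the error $R_{\mu,\xi}=-\Delta PU_{\mu,\xi}-(\int_\Omega PU^{6-\alpha}|x-y|^{-\alpha}dy)PU^{5-\alpha}-\lambda PU$; here one must control the nonlocal term, estimating $\int_\Omega (U_{\mu,\xi}^{6-\alpha}-PU_{\mu,\xi}^{6-\alpha})|x-y|^{-\alpha}dy$ via the Hardy–Littlewood–Sobolev inequality (Proposition \ref{HL}) and the expansion of $PU-U$. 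One expects $\|\phi_{\mu,\xi}\|\lesssim \mu^{1/2}|\log\mu|$ or a comparable power, uniformly for $\xi$ in compact subsets of $\Omega$ and $\mu$ in a suitable interval depending on $\lambda-\lambda_0$.

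The final step is the finite-dimensional reduction: plug $u=PU_{\mu,\xi}+\phi_{\mu,\xi}$ into the energy functional $J_\lambda$ and compute the reduced functional $\mathcal{J}_\lambda(\mu,\xi)=J_\lambda(PU_{\mu,\xi}+\phi_{\mu,\xi})$. The expansion should read, after the delicate computation of the nonlocal quadratic and sextic terms,
\begin{equation*}
\mathcal{J}_\lambda(\mu,\xi)=a_0+a_1\,\mu\,g_\lambda(\xi)-a_2\,\lambda\,\mu^2|\log\mu|+o(\mu^2|\log\mu|)
\end{equation*}
with $a_0,a_1,a_2>0$ (the dimension-three feature: the $\mu g_\lambda(\xi)$ term and the $\mu^2$ term compete, whereas in higher dimensions the balance is different); this is where the value $\lambda_0$ and the condition $\min g_{\lambda_0}=0$ become essential, since for $\lambda>\lambda_0$ close to $\lambda_0$ one has $g_\lambda(\xi)<0$ for $\xi$ near the minimum point, making the first-order term negative and producing a genuine critical point. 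Optimizing formally in $\mu$ gives $\mu_\lambda\sim -g_\lambda(\xi)/\lambda$ up to the constant $\gamma$, hence the stated form of $u_\lambda$ and of $\mu_\lambda$. Then in case $(a)$ I would show $\mathcal{J}_\lambda$ has an interior minimum over $\mathfrak D\times(\text{$\mu$-interval})$ using the gap $\inf_{\mathfrak D}g_{\lambda_0}<\inf_{\partial\mathfrak D}g_{\lambda_0}$ to keep $\xi_\lambda$ away from $\partial\mathfrak D$; in case $(b)$ the non-degeneracy of $D^2 g_{\lambda_0}(\xi_0)$ lets me apply a local degree / implicit-function argument to get a critical point $\xi_\lambda\to\xi_0$. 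A standard argument then shows a critical point of the reduced functional yields a genuine solution of \eqref{propro}. Finally, the two-sided bound $\beta_1(\lambda-\lambda_0)\le -g_\lambda(\xi_\lambda)\le\beta_2(\lambda-\lambda_0)$ follows by combining the monotonicity and smoothness of $\lambda\mapsto g_\lambda$ near $\lambda_0$ (the minimum of $g_\lambda$ is strictly decreasing in $\lambda$, vanishing at $\lambda_0$) with the fact that $\xi_\lambda$ stays near the minimum set of $g_{\lambda_0}$. I expect the main obstacle to be the precise expansion of the nonlocal energy terms — in particular isolating the $\mu^2|\log\mu|$ coefficient and showing the error is genuinely of lower order — since the double integral against $|x-y|^{-\alpha}$ mixes the bubble's tail behavior in $\R^3$ with the regular part $H_\lambda$, and the slow decay of $U$ in dimension three makes several integrals borderline; keeping track of this requires careful use of the Hardy–Littlewood–Sobolev inequality and sharp pointwise estimates on $PU_{\mu,\xi}-U_{\mu,\xi}$.
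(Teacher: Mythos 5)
Your overall strategy matches the paper's: a Lyapunov--Schmidt reduction modelled on del Pino--Dolbeault--Musso, with a $\lambda$-dependent projected bubble $U_{\mu,\xi}=w_{\mu,\xi}+\pi_{\mu,\xi}$, non-degeneracy of the limiting Choquard bubble to set up the linear theory, a contraction argument for the auxiliary equation, and a search for critical points of the reduced energy. The paper also rescales to $\Omega_\varepsilon=\varepsilon^{-1}\Omega$ and, notably, folds the truncation of the nonlocal term into the correction $\pi_{\mu,\xi}$ (see \eqref{pi}, which includes $-(\int_{\R^3\setminus\Omega}w_{\mu,\xi}^{6-\alpha}|x-y|^{-\alpha}dy)\,w_{\mu,\xi}^{5-\alpha}$ on the right-hand side), not merely the boundary defect and the $\lambda$ term; your ``harmonic projection then estimate the nonlocal defect separately'' version is workable but is a genuine deviation in bookkeeping, and the paper highlights the modified projection as one of the new technical points (Remark 1.4(ii)).

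There is, however, a substantive error in the one quantitative step you do write down: the energy expansion. You claim
\begin{equation*}
\mathcal{J}_\lambda(\mu,\xi)=a_0+a_1\,\mu\,g_\lambda(\xi)-a_2\,\lambda\,\mu^2|\log\mu|+o(\mu^2|\log\mu|),
\end{equation*}
with a negative $\mu^2$ coefficient and a logarithmic factor. The correct expansion (Proposition \ref{enex} of the paper, in the spirit of \cite{dDM}) is
\begin{equation*}
\mathcal{J}_\lambda(U_{\mu,\xi})=a_0+a_1\mu g_\lambda(\xi)+a_2\lambda\mu^2-a_3\mu^2 g_\lambda^2(\xi)+\mu^{5/2-\sigma}\theta(\mu,\xi),\qquad a_1,a_2,a_3>0,
\end{equation*}
with no $|\log\mu|$ and a \emph{positive} sign on $a_2\lambda\mu^2$. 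The logarithmic correction is a hallmark of $N=4$, not $N=3$; in three dimensions the relevant $\int w^5\pi$ and $\int|\pi|^2$ integrals give a clean $\mu^2$ term. The sign is not a cosmetic detail: with $g_\lambda(\xi)<0$ for $\lambda>\lambda_0$, your version $a_1\mu g_\lambda(\xi)-a_2\lambda\mu^2|\log\mu|$ is monotonically decreasing in $\mu>0$, so there is no interior critical point in $\mu$, and the stationarity condition $\partial_\mu=0$ has no positive solution --- contradicting the $\mu_\lambda=-\gamma\,g_\lambda(\xi_\lambda)/\lambda>0$ you go on to assert. With the correct $+a_2\lambda\mu^2$, the competition $-c_1\mu+c_2\mu^2$ has a minimum at $\mu=c_1/(2c_2)$, which is exactly what produces $\mu=-\frac{a_1}{2a_2}\frac{g_\lambda(\xi)}{\lambda}$ (equation \eqref{mu}). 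You should also retain, at least formally, the $-a_3\mu^2 g_\lambda^2(\xi)$ term: it is of lower order but its uniform control as $g_\lambda\to 0$ is part of what makes Lemma \ref{point} applicable.

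Finally, one small remark on rigor: the non-degeneracy of the Choquard bubble that underpins the linear theory is \emph{not} the classical Rey/Bianchi--Egnell fact; it is a recent result of Li, Liu, Tang and Xu \cite{LLTX} (Lemma \ref{ker}), and it is worth citing explicitly rather than treating it as folklore ``available from the literature.''
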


Our second result concerns the following Choquard type Lin-Ni-Takagi problem
\begin{align}\label{L}
 \left\{
  \begin{array}{ll}
  -\Delta u=\displaystyle\Big(\int_{\Omega}\frac{u^{6-\alpha}(y)}{|x-y|^\alpha}dy\Big)u^{5-\alpha}-\lambda u,
  \ \  &\mbox{in}\ \Omega,\\
  \displaystyle \frac{\partial u}{\partial \nu}=0,
  \ \  &\mbox{on}\ \partial \Omega,
    \end{array}
    \right.
  \end{align}
where $\alpha\in (0,3)$, $\lambda>0$, $\nu$ denotes the outward unit normal vector of
$\partial \Omega$, and $\Omega$ is a smooth bounded domain in $\mathbb{R}^3$.

The starting point on the study of \eqref{L} is its local version
\begin{align}\label{Lcla}
 \left\{
  \begin{array}{ll}
  -\Delta u=|u|^{p-2}u-\lambda u,
  \ \  &\mbox{in}\ \Omega,\\
 \displaystyle \frac{\partial u}{\partial \nu}=0,
  \ \  &\mbox{on}\ \partial \Omega,
    \end{array}
    \right.
  \end{align}
where $\Omega$ is a smooth bounded domain in $\mathbb{R}^N$, $N\geq 3$, $p>1$ and $\lambda>0$. The study of the zero Neumann boundary condition with Laplacian operator is a hot topic in nonlinear PDEs nowadays, and
a large literature has been devoted to study \eqref{Lcla} when $p\in [2,2^*]$. If $p\in (2,2^*)$, Lin, Ni, and Takagi \cite{LMT} proved that: as $\lambda\rightarrow0$, the only solution of \eqref{Lcla} is the constant; as $\lambda\rightarrow+\infty$, \eqref{Lcla} admits nonconstant solutions, which blow up and concentrate at one or several points. Moreover, Ni and Takagi  \cite{NT1,NT2} found that the least energy solution blows up and concentrates at a boundary point which maximizes the mean curvature of the boundary.
In the critical case, i.e., $p=2^*$, as $\lambda\rightarrow+\infty$, nonconstant solutions exist \cite{AM}, and the least energy solution blows up and concentrates at a unique point which maximizes the mean curvature of the boundary \cite{APY}. Based on the results mentioned above, Lin and Ni \cite{LM} conjectured that:
\medskip

\noindent {\bf Lin-Ni Conjecture:} If $p=2^*$, as $\lambda\rightarrow0$, problem \eqref{Lcla} admits only the constant solution.

\medskip

The above conjecture was studied by many scholars. In \cite{AY1,AY2},  Adimurthi and Yadava obtained radial solutions for \eqref{Lcla} when $\Omega$ is a ball in dimensions $N=4,5,6$, while no radial solution exists when $N=3$ or $N\geq7$. For a general convex domain, the Lin-Ni conjecture is true in dimension three \cite{WX,Z}. Wang et al. \cite{WWY} proved that this conjecture is false for all dimensions in some (partially symmetric) non-convex domains. For more classical results regarding the  Lin-Ni conjecture, we can see \cite{RW,DRW,WWY1,AY3} and references therein.

Noted that all the results mentioned above of \eqref{Lcla} are concerned with $\lambda>0$ small or large enough.
In \cite{dMRW},
del Pino et al. studied \eqref{Lcla} in dimension three and showed a new phenomenon, which is the existence of blowing-up solutions for \eqref{Lcla} when $\lambda$ closes to a number $\lambda^*\in(0,+\infty)$.
Furthermore, Salazar \cite{Sa} investigated the existence of sign-changing solutions, which blow up and concentrate at several different points.

Finally, we mention that
Giacomoni et al. \cite{GRS} first considered the following Choquard type Lin-Ni-Takagi problem
\begin{align}\label{Gia}
 \left\{
  \begin{array}{ll}
  -\Delta u=\displaystyle\Big(\int_{\Omega}\frac{|u(y)|^{2_\alpha^*}}{|x-y|^\alpha}dy\Big)|u|^{2_\alpha^*-2}u+\lambda  h(x)u,
  \ \  &\mbox{in}\ \Omega,\\
  \displaystyle \frac{\partial u}{\partial \nu}=0,
  \ \  &\mbox{on}\ \partial \Omega,
    \end{array}
    \right.
  \end{align}
where $\Omega$ is a smooth bounded domain in $\mathbb{R}^N$, $N\geq 4$, $\alpha\in (0,N)$, $\lambda>0$, $h\in C^\infty(\overline{\Omega})$ and $ \int_{\Omega}h(x)dx<0$. Under proper assumptions on $\lambda$ and $h(x)$, the authors obtained the existence of a solution for problem \eqref{Gia}.

Inspired by \cite{dMRW} and \cite{GRS},
a natural question arises,
{\em
does \eqref{Gia} has a blowing-up solution when $N=3$?} In the rest of the paper, we focuses on this issue.
Before presenting the main result, we shall make some notations.
For $\lambda>0$, we let $G^\lambda(x,y)$ be the Green function of the problem
\begin{align*}
 \left\{
  \begin{array}{ll}
  -\Delta_y G^\lambda(x,y)+\lambda G^\lambda(x,y)=\delta(x-y),
  \ \  &\mbox{$y\in \Omega$},\\
 \displaystyle  \frac{\partial G^\lambda(x,y)}{\partial \nu}=0,
  \ \  &\mbox{$y\in \partial \Omega$},
    \end{array}
    \right.
  \end{align*}
and $H^\lambda(x,y)=\Gamma(x-y)-G^\lambda(x,y)$ be its regular part, then
\begin{align*}
 \left\{
  \begin{array}{ll}
  -\Delta_y H^\lambda(x,y)+\lambda H^\lambda(x,y)=-\lambda \Gamma(x-y),
  \ \  &\mbox{$y\in \Omega$},\\
  \displaystyle  \frac{\partial H^\lambda(x,y)}{\partial \nu}=\frac{\partial \Gamma(x-y)}{\partial \nu},
  \ \  &\mbox{$y\in \partial \Omega$}.
    \end{array}
    \right.
  \end{align*}
Define the Robin function of $G^\lambda$ as
\begin{equation*}
  g^\lambda(x)=H^\lambda(x,x).
\end{equation*}
From \cite[Lemmas 2.1, 2.2]{dMRW}, we know $g^\lambda(x)$ is a smooth function which goes to $-\infty$ as $x$ approaches to $\partial \Omega$. The maximum of $g_\lambda$ in $\Omega$ is strictly increasing in $\lambda$, is strictly positive when $\lambda$ is close to $+\infty$ and approaches $-\infty$ as $\lambda\rightarrow0$.
Moreover, the number $\lambda^*$ obtained in \cite{dMRW} is the smallest $\lambda\in (0,+\infty)$ such that $\max\limits_{\Omega}g^\lambda<0$.

Our second result is as follows.
\begin{theorem}\label{th1}
Assume that for a number $\lambda^0>0$, one of the following two situations holds.

$(a)$ There is an open subset $\mathcal{U}$ of $\Omega$ such that
\begin{equation*}
  0=\sup\limits_{\mathcal{U}}g^{\lambda_0}>\sup\limits_{\partial\mathcal{U}}g^{\lambda_0}.
\end{equation*}

$(b)$ There is a point $\xi^0\in \Omega$ such that $g^{\lambda_0}(\xi^0)=0$, $\nabla g^{\lambda^0}(\xi^0)=0$ and $D^2g^{\lambda^0}(\xi^0)=0$ is non-singular.
\\Then for all $\lambda>\lambda^0$ sufficiently close to $\lambda^0$, there exists a solution $u^\lambda$ of problem \eqref{L} of the form:
\begin{equation*}
  u^\lambda(x)=3^{1/4}\Big(\frac{\mu^\lambda}{(\mu^\lambda)^2+|x-\xi^\lambda|^2}\Big)^{1/2}+O\big((\mu^\lambda)^{1/2}\big), \quad \mu^\lambda=\gamma\frac{g^\lambda(\xi^\lambda)}{\lambda}>0,
\end{equation*}
for some $\gamma>0$. Here we have $\xi^\lambda\in \mathcal{U}$ if case $(a)$ holds and $\xi^\lambda\rightarrow\xi^0$ as $\lambda\rightarrow \lambda_0$ if $(b)$ holds. Moreover, for some positive numbers $\beta_1,\beta_2$, we have
\begin{equation*}
  \beta_1(\lambda-\lambda^0)\leq g_\lambda(\xi^\lambda)\leq \beta_2(\lambda-\lambda^0).
\end{equation*}
\end{theorem}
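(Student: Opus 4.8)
The plan is to mirror the Lyapunov--Schmidt reduction already outlined for Theorem~\ref{th}, transported to the Neumann setting via the sign changes dictated by \eqref{L} and the Green function $G^\lambda$. First I would fix the approximate solution: for $\xi\in\Omega$ and a small parameter $\mu>0$ set
\begin{equation*}
  U_{\mu,\xi}(x)=3^{1/4}\Big(\frac{\mu}{\mu^2+|x-\xi|^2}\Big)^{1/2},
\end{equation*}
the standard bubble solving $-\Delta U=\big(\int_{\mathbb{R}^3}|x-y|^{-\alpha}U^{6-\alpha}(y)\,dy\big)U^{5-\alpha}$ in $\mathbb{R}^3$, and correct it by the projection $PU_{\mu,\xi}$ that satisfies the homogeneous Neumann condition, so that $PU_{\mu,\xi}=U_{\mu,\xi}-\mu^{1/2}3^{1/4}\big(4\pi\big)H^\lambda(\xi,\cdot)+o(\mu^{1/2})$ uniformly on $\overline\Omega$; this is the Neumann analogue of the Dirichlet expansion and uses \cite[Lemmas 2.1, 2.2]{dMRW}. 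Next I would expand the energy functional $J_\lambda$ associated to \eqref{L} evaluated at $PU_{\mu,\xi}$. The critical Choquard nonlinearity contributes, via the Hardy--Littlewood--Sobolev inequality (Proposition~\ref{HL}) and the explicit integrals of powers of $U_{\mu,\xi}$, a leading term of the form $a_1 S_{H,L}^{\#}$ plus a term $a_2\,\mu\,g^\lambda(\xi)$ coming from the interaction of the bubble with its regular part, while the linear term $-\lambda\int u^2$ produces $-a_3\,\lambda\,\mu\,\big(\log(1/\mu)\big)$ in dimension three (the logarithm being the hallmark of $N=3$). Collecting, one gets
\begin{equation*}
  J_\lambda(PU_{\mu,\xi})=c_0+\mu\Big(a_2\,g^\lambda(\xi)+a_3\,\lambda\,\mu\,\big|\log\mu\big|\Big)+o\big(\mu\,|\log\mu|\big),
\end{equation*}
with all $a_i>0$ after the sign conventions of \eqref{L} are tracked; here the change $\lambda u\mapsto -\lambda u$ and the fact that $g^\lambda$ is now \emph{maximized} (rather than minimized) flip the roles relative to Theorem~\ref{th}, which is why we require $0=\sup_{\mathcal U}g^{\lambda^0}$ and $\mu^\lambda=\gamma g^\lambda(\xi^\lambda)/\lambda>0$.

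The second step is the finite-dimensional reduction. I would linearize \eqref{L} at $PU_{\mu,\xi}$ and, using the non-degeneracy of the bubble for the critical Choquard equation (the kernel of the linearized operator on $D^{1,2}(\mathbb{R}^3)$ is spanned by $\partial_\mu U$ and $\partial_{\xi_i}U$, $i=1,2,3$ — this is available from the Choquard literature cited, e.g. \cite{YYZ,GY}), solve the projected problem: there is a unique small $\phi=\phi_{\mu,\xi,\lambda}\in H^1(\Omega)$, orthogonal to the approximate kernel, with $\|\phi\|=O(\mu^{1/2}|\log\mu|)$ such that $PU_{\mu,\xi}+\phi$ solves \eqref{L} up to a linear combination of the $\partial_\mu PU$, $\partial_{\xi_i}PU$. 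The invertibility of the linearized operator uniformly in $(\mu,\xi)$ on the relevant range — after modding out the kernel — is standard once the bubble's non-degeneracy is in hand; the Neumann boundary only affects the projection operator $P$, not this estimate.

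The final step is to solve the reduced (variational) problem. Define the reduced functional $\mathcal J_\lambda(\mu,\xi)=J_\lambda(PU_{\mu,\xi}+\phi_{\mu,\xi,\lambda})$; its critical points give genuine solutions of \eqref{L}. By the energy expansion above (the error from $\phi$ being of strictly lower order, $o(\mu|\log\mu|)$, which one checks from $\|\phi\|$ and the $C^1$-closeness in $(\mu,\xi)$), $\mathcal J_\lambda$ behaves like $c_0+\mu\big(a_2 g^\lambda(\xi)+a_3\lambda\mu|\log\mu|\big)$. For fixed $\xi$, optimizing in $\mu$ forces $g^\lambda(\xi)<0$ unless one is near the regime $\lambda\downarrow\lambda^0$ where $\sup g^\lambda$ crosses $0$ from below; writing $\mu=\mu(\xi)$ for the optimal scale gives $\mu(\xi)\sim \gamma\, g^\lambda(\xi)/\lambda$ with $\gamma=a_2/(a_3\lambda|\log\mu|)$ absorbed appropriately (in dimension three the logarithm is reabsorbed into the definition of $\mu$ as in \cite{dDM,dMRW}, yielding the clean formula $\mu^\lambda=\gamma g^\lambda(\xi^\lambda)/\lambda$). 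Substituting back, $\xi\mapsto \mathcal J_\lambda(\mu(\xi),\xi)$ is, to leading order, a positive multiple of $\big(g^\lambda(\xi)\big)^2$ times a negative constant, so: in case $(a)$ the condition $0=\sup_{\mathcal U}g^{\lambda^0}>\sup_{\partial\mathcal U}g^{\lambda^0}$ guarantees, for $\lambda>\lambda^0$ close to $\lambda^0$, an interior maximum point $\xi^\lambda\in\mathcal U$ of the reduced functional (the boundary values being strictly smaller); in case $(b)$ the non-degeneracy $D^2 g^{\lambda^0}(\xi^0)$ non-singular lets one apply the implicit function theorem / local degree to produce $\xi^\lambda\to\xi^0$. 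Either way one extracts a critical point, hence a solution $u^\lambda=PU_{\mu^\lambda,\xi^\lambda}+\phi$ of the asserted form, and the two-sided bound $\beta_1(\lambda-\lambda^0)\le g_\lambda(\xi^\lambda)\le\beta_2(\lambda-\lambda^0)$ follows by comparing the expansion of $g^\lambda$ in $\lambda$ near $\lambda^0$ (monotonicity of $\max_\Omega g^\lambda$ from \cite{dMRW}) with the size of the optimal $\mu^\lambda$.

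The main obstacle I anticipate is the energy expansion: one must compute, with the correct constants and to the order $\mu|\log\mu|$, the nonlocal term $\iint_{\Omega^2}|x-y|^{-\alpha}(PU_{\mu,\xi})^{6-\alpha}(y)(PU_{\mu,\xi})^{6-\alpha}(x)\,dx\,dy$ and its interaction with $H^\lambda$ — this is more delicate than in the local problem \cite{dMRW} because the kernel $|x-y|^{-\alpha}$ couples the bubble to the regular part in a double integral, and the precise coefficient $a_2$ in front of $g^\lambda(\xi)$ (and thus the constant $\gamma$) must be positive. Controlling this requires careful use of the Hardy--Littlewood--Sobolev inequality together with the pointwise expansion $PU_{\mu,\xi}=U_{\mu,\xi}-4\pi\,3^{1/4}\mu^{1/2}H^\lambda(\xi,x)+o(\mu^{1/2})$, and is where most of the technical work lies; the rest (linear theory, reduction, degree argument) is by now routine in this circle of ideas.
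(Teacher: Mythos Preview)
Your overall architecture --- bubble ansatz with Neumann projection, energy expansion, Lyapunov--Schmidt reduction, and a finite-dimensional critical point argument in $(\mu,\xi)$ --- is exactly the one the paper uses, and the sign reversals you identify (maximizer instead of minimizer, $g^\lambda>0$ instead of $g_\lambda<0$) are correct.

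However, your energy expansion is wrong in a way that matters. There is \emph{no logarithm} here: the expansion the paper obtains (Lemma~\ref{zhan} and the subsequent lemma in Section~\ref{Numan}) is
\[
\mathcal{J}_\lambda(U_{\mu,\xi})=a_0+a_1\mu\, g^\lambda(\xi)-a_2\lambda\mu^2-a_3\mu^2\big(g^\lambda(\xi)\big)^2+\mu^{5/2-\sigma}\theta(\mu,\xi),
\]
with the same constants as in \eqref{changshu}. The term you write as $a_3\lambda\mu|\log\mu|$ does not appear. The reason is that the correction $\Theta_{\mu,\xi}$ is taken with respect to the $\lambda$-\emph{dependent} Neumann Green function $G^\lambda$ (see \eqref{pi1}): the linear term $-\lambda u$ is already built into the projection, so the contribution of $-\frac{\lambda}{2}\int U^2$ is not computed directly as $\int w_{\mu,\xi}^2$ (which would indeed be $O(\mu)$, not $O(\mu\log\mu)$ by the way) but via integration by parts against \eqref{pi1}, producing the clean $-a_2\lambda\mu^2$. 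This is precisely the mechanism of \cite{dDM,dMRW}; there is no logarithm to ``reabsorb''.

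This error propagates: the balance determining $\mu$ is between $a_1\mu g^\lambda(\xi)$ and $-a_2\lambda\mu^2$, giving $\mu=\frac{a_1}{2a_2}\frac{g^\lambda(\xi)}{\lambda}\Lambda$ directly, and the reduced functional has the structure $\Psi_\lambda(\Lambda,\xi)=\mathcal{J}_\lambda(U_{\mu,\xi})+(g^\lambda)^2\theta_\lambda$ to which the paper applies the abstract critical-point lemma (Lemma~\ref{point}, taken from \cite[Lemma~3.3]{dDM}). Your description of the reduced problem as ``a positive multiple of $(g^\lambda(\xi))^2$ times a negative constant'' is therefore not quite right either; the actual argument tracks both the $\Lambda$ and $\xi$ variables simultaneously and uses the shrinking set $\mathcal{U}^\lambda=\{g^\lambda>\frac{A}{2}(\lambda-\lambda^0)\}$ rather than a pure maximization in $\xi$. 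Once you correct the expansion, the rest of your outline goes through.
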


\begin{remark}
{\rm By the definition and continuity of $g_\lambda$, it clearly follows that $\min\limits_{\Omega}g_{\lambda_*}=0$,
hence there is an open set $\mathfrak{D}$ with compact closure inside $\Omega$ such that
\begin{equation*}
  0=\inf\limits_{\mathfrak{D}}g_{\lambda_*}<\inf\limits_{\partial\mathfrak{D}}g_{\lambda_*}.
\end{equation*}
Let $\lambda_0=\lambda_*$, then $\lambda_0$ satisfies condition $(a)$ of Theorem \ref{th}. Similar arguments apply to $g^\lambda$ in Theorem \ref{th1}-$(a)$.}
\end{remark}

\begin{remark}
{\rm  Compared with the previous work, there are some features of this paper as follows:

(i) The result obtained in Theorem \ref{th} extends the earlier results of the local problem in \cite{dDM} and the high-dimensional problem ($N\geq5$) in \cite{YYZ} to the case of the nonlocal problem in dimension three.

(ii) Theorem \ref{th1} generalized the results of the local problem in \cite{dMRW} and the high-dimensional problem ($N\geq4$) in \cite{GRS} to a nonlocal one in dimension three.}
\end{remark}

\begin{remark}
{\rm Since we are working with the Choquard nonlinearity, there are some difficulties to deal with:

(i) It is difficult to calculate the norm of the nonlocal term directly. For this, we regard the nonlocal term as a operator, then by the Hardy-Littlewood-Sobolev inequality and the definition of the norm for a operator, we obtain the desired result, see e.g. Lemmas \ref{non} and \ref{err}.

(ii) Since the appearance of the nonlocal term, it is natural to make some adjustments for the projections obtained in  \cite{dDM} and \cite{dMRW}, we can see this in \eqref{pi} and \eqref{pi1}.
}
\end{remark}

\begin{remark}
{\rm In this paper, we apply the reduction argument to complete our proof, and a crucial step is to prove that the operator $T$ (defined in \eqref{map}) is a contraction map.
Different from \cite[Lemma 2.5]{YYZ}, we give a new proof for this, see the proof of Proposition \ref{fixed}.}
\end{remark}

\begin{remark}
{\rm In this paper, we focuses on the existence of single blowing-up solutions, and from \cite{MS2}, \cite{Sa}, one may ask that, does \eqref{propro} or \eqref{L} possesses multiple blowing-up solutions? This is a natural but non-obvious generalization, since there exist some interactions between bubblings, and a more precise estimate of energy expansion is needed, see e.g. \cite[Lemma 2.1]{MS2} and \cite[Lemma 2.1]{Sa}, we will study it in the forthcoming work.
}
\end{remark}

The proof of our results relies on a well known finite dimensional reduction method, introduced in \cite{BC,FW}.
The paper is organized as follows. In Section \ref{Pre}, we introduce some preliminary results. Section \ref{Energy} is devoted to the energy expansion. 
In Section \ref{Reduction}, we perform the finite dimensional reduction, and give some $C^1$-estimates in Section \ref{C^1}.
In Section \ref{Final}, we complete the proof of Theorem \ref{th}. Finally, in Section \ref{Numan}, we briefly treat problem \eqref{L} and prove Theorem \ref{th1}. Throughout the paper, $C$ denotes positive constant possibly different from line to line,
$A=o(B)$ means $A/B\rightarrow 0$ and $A=O(B)$ means that $|A/B|\leq C$.

\section{Preliminaries}\label{Pre}
In this section, we give some preliminaries. For the nonlocal problem with the convolution, an important inequality due to the Hardy-Littlewood-Sobolev inequality will be used in the following.
\begin{proposition}\cite[Theorem 4.3]{LL}\label{HL}
Let $\theta,r>1$ and $\alpha\in (0,3)$ with $\frac{1}{\theta}+\frac{\alpha}{3}+\frac{1}{r}=2$. If $f\in L^\theta(\mathbb{R}^3)$ and $g\in L^r(\mathbb{R}^3)$, then there exists a sharp constant $C(\theta,r,\alpha)$ independent of $f,g$, such that
\begin{equation}\label{HLS}
  \displaystyle\int_{\mathbb{R}^3}\int_{\mathbb{R}^3}\frac{f(x)g(y)}{|x-y|^\alpha}dxdy\leq C(\theta,r,\alpha)\|f\|_{L^\theta(\mathbb{R}^3)}\|g\|_{L^r(\mathbb{R}^3)}.
\end{equation}
If $\theta=r=\frac{6}{6-\alpha}$, then there is equality in \eqref{HLS} if and only if $f=cg$ for a constant $c$ and
\begin{equation*}
  g(x)=A(\gamma^2+|x-a|^2)^{-\frac{6-\alpha}{2}}
\end{equation*}
for some $A\in \mathbb{C}$, $0\neq\gamma\in \mathbb{R}$ and $a\in \mathbb{R}^3$.
\end{proposition}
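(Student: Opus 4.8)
The plan is to treat the two assertions separately: first the inequality \eqref{HLS} with \emph{some} finite constant, and then --- only in the balanced case $\theta=r=\frac{6}{6-\alpha}$ --- the sharpness of $C(\theta,r,\alpha)$ together with the rigidity statement. For the inequality I would use that the kernel $k(z)=|z|^{-\alpha}$ lies in the weak Lebesgue space $L^{3/\alpha,\infty}(\mathbb{R}^3)$. By O'Neil's weak Young inequality, convolution with $k$ maps $L^r(\mathbb{R}^3)$ boundedly into $L^{\theta'}(\mathbb{R}^3)$ provided $\frac1r+\frac{\alpha}{3}=1+\frac1{\theta'}$, i.e. $\frac1\theta+\frac{\alpha}{3}+\frac1r=2$; pairing $k\ast g$ with $f\in L^\theta$ by H\"older then gives \eqref{HLS}. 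If one wishes to avoid O'Neil's inequality, the same bound follows by hand: split $k=k\mathbf{1}_{\{|z|\le R\}}+k\mathbf{1}_{\{|z|>R\}}$, estimate each piece by H\"older, optimise in $R$, and interpolate (Marcinkiewicz) between the resulting restricted weak-type inequalities.

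For the sharp constant and the equality case I would follow Lieb's scheme. \textbf{Step 1 (symmetrisation).} By the Riesz rearrangement inequality, replacing $f,g$ by their symmetric-decreasing rearrangements $f^\ast,g^\ast$ does not decrease the left-hand side of \eqref{HLS} while preserving the $L^{6/(6-\alpha)}$-norms; hence extremisers may be taken nonnegative, radial and nonincreasing. \textbf{Step 2 (reduction to $f=g$).} Since $0<\alpha<3$, the Fourier transform of $|z|^{-\alpha}$ on $\mathbb{R}^3$ is a positive multiple of $|\xi|^{\alpha-3}$, so the bilinear form $D(f,g):=\iint\frac{f(x)g(y)}{|x-y|^\alpha}\,dx\,dy$ is positive definite; Cauchy--Schwarz yields $D(f,g)\le D(f,f)^{1/2}D(g,g)^{1/2}$ with equality iff $f=cg$, so the best constant equals $\sup\{D(h,h):\|h\|_{L^{6/(6-\alpha)}}=1\}$ and any extremal pair obeys $f=cg$.

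\textbf{Step 3 (conformal transfer and classification).} The Rayleigh quotient $D(h,h)/\|h\|_{L^{6/(6-\alpha)}}^{2}$ is invariant under the conformal group of $\mathbb{R}^3\cup\{\infty\}$ acting on $h$ by $h\mapsto|\det D\phi|^{(6-\alpha)/6}(h\circ\phi)$. Via stereographic projection this becomes the problem of maximising $\iint_{S^3\times S^3}|\zeta-\eta|^{-\alpha}F(\zeta)F(\eta)\,d\sigma\,d\sigma$ over $F\in L^{6/(6-\alpha)}(S^3)$ of unit norm. On the compact sphere, rearrangement onto geodesic caps together with the direct method yields a maximiser $F_0\ge0$ solving the Euler--Lagrange identity $\int_{S^3}|\zeta-\eta|^{-\alpha}F_0(\eta)\,d\sigma(\eta)=c\,F_0(\zeta)^{\alpha/(6-\alpha)}$; a conformal map that centres the mass of $F_0^{6/(6-\alpha)}$ at the origin (Lieb's competition/averaging argument) forces $F_0$ to be constant on $S^3$. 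Pulling this constant back through the stereographic projection gives precisely $g(x)=A(\gamma^2+|x-a|^2)^{-(6-\alpha)/2}$, the parameters $\gamma,a$ arising from dilations and translations, and $f=cg$ by Step 2; evaluating $D$ on this function pins down $C(\theta,r,\alpha)$.

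The main obstacle is the sharp/rigidity part, not the inequality itself. Concretely, one must prove that the supremum is attained --- this is exactly where the criticality of the exponent bites, since the mass of $F_0^{6/(6-\alpha)}$ could a priori concentrate at a point or leak to infinity (equivalently, a bubble could form) --- and then that every maximiser coincides, up to the non-compact conformal symmetries, with the constant on $S^3$. The symmetry reduction by rearrangement is clean, but the compactness and the final identification of the extremiser (via Lieb's competition argument, or concentration-compactness combined with the conformal action) are the delicate points; the non-sharp version of \eqref{HLS} is routine.
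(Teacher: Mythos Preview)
Your outline is correct and essentially reproduces Lieb's original argument (weak Young for the inequality; rearrangement, positive-definiteness reduction to $f=cg$, conformal transfer to $S^3$, and the competition argument for the sharp constant and extremisers). However, there is nothing to compare against: the paper does not prove Proposition~\ref{HL} at all. It is stated with the citation \cite[Theorem 4.3]{LL} and used as a black box throughout (via \eqref{HLS}). So your proposal goes well beyond what the paper does, which is simply to quote the result from Lieb--Loss.
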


\begin{lemma}\cite[Section 5]{Matt}
For $f,g\in L_{loc}^1(\mathbb{R}^3)$, there holds
\begin{equation}\label{CS}
  \displaystyle\int_{\mathbb{R}^3}\int_{\mathbb{R}^3}\frac{|f(x)||g(y)|}{|x-y|^\alpha}dxdy\leq \Big(\int_{\mathbb{R}^3}\int_{\mathbb{R}^3}\frac{|f(x)||f(y)|}{|x-y|^\alpha}dxdy\Big)^{\frac{1}{2}}\Big(\int_{\mathbb{R}^3}\int_{\mathbb{R}^3}\frac{|g(x)||g(y)|}{|x-y|^\alpha}dxdy\Big)^{\frac{1}{2}}.
\end{equation}
\end{lemma}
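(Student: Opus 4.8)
The plan is to deduce \eqref{CS} from the ordinary Cauchy--Schwarz inequality in an $L^2$-space, after rewriting the Riesz kernel $|x-y|^{-\alpha}$ through a Gaussian (subordination) representation. Since both sides of \eqref{CS} depend only on $|f|$ and $|g|$, it suffices to prove that, for every pair of measurable functions $h_1,h_2\colon\mathbb{R}^3\to[0,\infty]$,
\begin{equation*}
  B(h_1,h_2)\le B(h_1,h_1)^{1/2}\,B(h_2,h_2)^{1/2},
\end{equation*}
where $B(h_1,h_2):=\int_{\mathbb{R}^3}\int_{\mathbb{R}^3}|x-y|^{-\alpha}h_1(x)h_2(y)\,dx\,dy\in[0,\infty]$ is well defined by Tonelli's theorem, and the displayed inequality is read with the convention $0\cdot\infty=0$, so that the cases in which one factor on the right is infinite require no separate discussion.

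First I would record the two elementary identities driving the argument. Since $\alpha\in(0,3)$, the substitution $u=st$ in $\int_0^\infty s^{\alpha/2-1}e^{-st}\,ds=\Gamma(\alpha/2)\,t^{-\alpha/2}$ gives, with $t=|z|^2$, the subordination formula
\begin{equation*}
  \frac{1}{|z|^\alpha}=\frac{1}{\Gamma(\alpha/2)}\int_0^\infty s^{\frac{\alpha}{2}-1}\,e^{-s|z|^2}\,ds ,\qquad z\in\mathbb{R}^3\setminus\{0\};
\end{equation*}
and completing the square in the auxiliary variable $w$ gives, for every $s>0$,
\begin{equation*}
  e^{-s|x-y|^2}=\Big(\frac{4s}{\pi}\Big)^{3/2}\int_{\mathbb{R}^3}e^{-2s|x-w|^2}\,e^{-2s|y-w|^2}\,dw .
\end{equation*}
Substituting both identities into $B(h_1,h_2)$ and interchanging the order of integration — legitimate at every step by Tonelli's theorem, since all integrands are nonnegative — one obtains, with the notation
\begin{equation*}
  \phi_i(s,w):=\int_{\mathbb{R}^3}h_i(x)\,e^{-2s|x-w|^2}\,dx ,\qquad d\mu(s,w):=\frac{(4/\pi)^{3/2}}{\Gamma(\alpha/2)}\,s^{\frac{\alpha+1}{2}}\,ds\,dw
\end{equation*}
on $(0,\infty)\times\mathbb{R}^3$, the identity $B(h_1,h_2)=\int_0^\infty\int_{\mathbb{R}^3}\phi_1(s,w)\,\phi_2(s,w)\,d\mu(s,w)$, and in particular $B(h_i,h_i)=\|\phi_i\|_{L^2(d\mu)}^2$. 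The desired inequality is then exactly the Cauchy--Schwarz inequality in the Hilbert space $L^2\big((0,\infty)\times\mathbb{R}^3,d\mu\big)$ applied to $\phi_1$ and $\phi_2$; taking $h_1=|f|$ and $h_2=|g|$ yields \eqref{CS}.

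I do not expect any genuine obstacle here: the only points needing attention are bookkeeping. One should note that $d\mu$ is a $\sigma$-finite (indeed locally finite) Borel measure on $(0,\infty)\times\mathbb{R}^3$, since $s^{(\alpha+1)/2}$ is continuous and positive there; that all interchanges of integration order are covered by Tonelli's theorem because every integrand involved is nonnegative; and that, with the convention $0\cdot\infty=0$, the $L^2$ Cauchy--Schwarz inequality holds for nonnegative measurable functions with values in $[0,\infty]$ with no finiteness hypothesis. If one prefers to bypass the measure $\mu$, an equivalent route is to use the Gaussian identity to show $\int_{\mathbb{R}^3}\int_{\mathbb{R}^3}h(x)h(y)e^{-s|x-y|^2}\,dx\,dy=(4s/\pi)^{3/2}\int_{\mathbb{R}^3}\phi(s,w)^2\,dw\ge 0$, conclude that $B$ is a positive semi-definite symmetric bilinear form, and invoke the abstract Cauchy--Schwarz inequality valid for any such form.
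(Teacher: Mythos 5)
Your proof is correct. Note that the paper does not give its own proof of \eqref{CS}: it simply cites Mattner \cite{Matt}, whose argument deduces positive semi-definiteness of the Riesz kernel from the general theory of completely monotone functions. Your route is the classical elementary alternative: represent $|z|^{-\alpha}$ by the Gamma-function (subordination) integral, factor each Gaussian $e^{-s|x-y|^2}$ through the convolution square of a narrower Gaussian, and conclude that
\begin{equation*}
  B(h_1,h_2)=\int_0^\infty\!\!\int_{\mathbb{R}^3}\phi_1(s,w)\,\phi_2(s,w)\,d\mu(s,w),
  \qquad
  \phi_i(s,w)=\int_{\mathbb{R}^3}h_i(x)\,e^{-2s|x-w|^2}\,dx,
\end{equation*}
so that $B$ is literally an $L^2(d\mu)$ inner product and the desired bound is the Cauchy--Schwarz inequality. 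The bookkeeping is right: both identities are standard, the exponent $\tfrac{\alpha}{2}-1+\tfrac{3}{2}=\tfrac{\alpha+1}{2}$ in $d\mu$ is correct, Tonelli handles all interchanges since every integrand is nonnegative, and your conventions dispose of the infinite cases (note that the $L^1_{loc}$ hypothesis does not make the integrals finite; the inequality is read in $[0,\infty]$). What your approach buys is a fully self-contained, hands-on proof that needs nothing beyond the Gamma integral and a Gaussian convolution identity, at the cost of being specific to the Riesz kernel; Mattner's machinery is heavier but simultaneously treats a whole class of kernels and yields \emph{strict} definiteness, which is more than \eqref{CS} requires.
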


Given a  positive number $\mu$ and a point $\xi\in \mathbb{R}^3$, we denote by
\begin{equation*}
  w_{\mu,\xi}(x)=3^{1/4}\Big(\frac{\mu}{\mu^2+|x-\xi|^2}\Big)^{1/2},
\end{equation*}
which correspond to all positive solutions of
\begin{equation}\label{lim1}
  -\Delta w=w^5,\quad \text{in $\mathbb{R}^3$.}
\end{equation}
From \cite[Lemma 1.1]{YYZ}, we know $w_{\mu,\xi}$ satisfies
\begin{equation*}
  -\Delta w_{\mu,\xi}=A_{H,L}\displaystyle\Big(\int_{\mathbb{R}^3}\frac{w_{\mu,\xi}^{6-\alpha}(y)}{|x-y|^\alpha}dy\Big)w_{\mu,\xi}^{5-\alpha},\quad \text{in $\mathbb{R}^3$},
\end{equation*}
for some constant $A_{H,L}>0$. For simplicity, in the following, we will leave out the constant $A_{H,L}$, i.e.,
\begin{equation}\label{lim2}
  \displaystyle-\Delta w_{\mu,\xi}=\Big(\int_{\mathbb{R}^3}\frac{w_{\mu,\xi}^{6-\alpha}(y)}{|x-y|^\alpha}dy\Big)w_{\mu,\xi}^{5-\alpha},\quad \text{in $\mathbb{R}^3$}.
\end{equation}

In order to apply the reduction arguments, the non-degeneracy property of solution $w_{\mu,\xi}$ for \eqref{lim2} plays a crucial role. In fact, we have the following fact for the critical Choquard equation, which was established by Li et al. in \cite{LLTX} recently.

\begin{lemma}\label{ker}\cite[Theorem 1.5]{LLTX}
Let $\alpha\in (0,3)$, then the kernel of the linear operator for \eqref{lim2} at $w_{\mu,\xi}$
\begin{equation*}
 \ell(h)=\displaystyle -\Delta h-(6-\alpha)\Big(\int_{\mathbb{R}^3}\frac{w_{\mu,\xi}^{5-\alpha}(y)h(y)}{|x-y|^\alpha}dy\Big)w_{\mu,\xi}^{5-\alpha}-
 (5-\alpha)\Big(\int_{\mathbb{R}^3}\frac{w_{\mu,\xi}^{6-\alpha}(y)}{|x-y|^\alpha}dy\Big)w_{\mu,\xi}^{4-\alpha}h,\quad h\in D^{1,2}(\mathbb{R}^3),
\end{equation*}
is given by
\begin{equation*}
  {\bf span}\Big\{\frac{\partial w_{\mu,\xi}}{\partial \xi_1},\frac{\partial w_{\mu,\xi}}{\partial \xi_2},\frac{\partial w_{\mu,\xi}}{\partial \xi_3},\frac{\partial w_{\mu,\xi}}{\partial \mu}\Big\}.
\end{equation*}
\end{lemma}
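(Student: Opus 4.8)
The plan is to pass, via the inverse stereographic projection, to an eigenvalue problem on the round sphere $S^3$, where both the Laplacian and the Choquard nonlinearity are diagonalized by spherical harmonics. By the scaling $u\mapsto\mu^{1/2}u(\mu\,\cdot)$ and by translation invariance of \eqref{lim2}, it suffices to treat $w:=w_{1,0}$ (here and below we use \eqref{lim2}, i.e. we normalize $w$ so that it holds, the constant $A_{H,L}$ being suppressed as in the text). Let $\sigma\colon\mathbb{R}^3\to S^3\setminus\{P\}$ be the inverse stereographic projection and put $u(x)=\big(\tfrac{2}{1+|x|^2}\big)^{1/2}v(\sigma(x))$; this is a linear isomorphism $D^{1,2}(\mathbb{R}^3)\to H^1(S^3)$. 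The two classical conformal identities
\begin{align*}
\int_{\mathbb{R}^3}|\nabla u|^2\,dx&=\int_{S^3}\Big(|\nabla_{S^3}v|^2+\tfrac34 v^2\Big)\,dV,\\
\int_{\mathbb{R}^3}\!\int_{\mathbb{R}^3}\frac{u(x)^{6-\alpha}u(y)^{6-\alpha}}{|x-y|^\alpha}\,dx\,dy&=\int_{S^3}\!\int_{S^3}\frac{v(\zeta)^{6-\alpha}v(\eta)^{6-\alpha}}{|\zeta-\eta|^\alpha}\,dV(\zeta)\,dV(\eta)
\end{align*}
(the second is the conformal invariance underlying the sharp Hardy-Littlewood-Sobolev inequality, and is exactly what forces the exponent $6-\alpha$) turn \eqref{lim2} into
\[
-\Delta_{S^3}v+\tfrac34 v=\Big(\int_{S^3}\frac{v(\eta)^{6-\alpha}}{|\zeta-\eta|^\alpha}\,dV(\eta)\Big)v^{5-\alpha}\qquad\text{on }S^3 ,
\]
under which $w$ corresponds to a positive constant $v\equiv c$, while the four functions $\partial_{\xi_1}w,\partial_{\xi_2}w,\partial_{\xi_3}w,\partial_\mu w$ correspond (up to constants) to the restrictions $\omega_1,\dots,\omega_4$ of the coordinate functions of $\mathbb{R}^4$ to $S^3$, i.e. to a basis of the space $\mathcal H_1$ of degree-one spherical harmonics ($\dim\mathcal H_1=4$, matching the number of parameters $(\mu,\xi_1,\xi_2,\xi_3)$).

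Next I linearize the sphere equation at $v\equiv c$. Writing $T\phi(\zeta):=\int_{S^3}|\zeta-\eta|^{-\alpha}\phi(\eta)\,dV(\eta)$ and $\tau_0:=T\mathbf{1}>0$, the linearized operator becomes
\[
\mathcal L\phi=-\Delta_{S^3}\phi+\tfrac34\phi-(6-\alpha)c^{10-2\alpha}\,T\phi-(5-\alpha)c^{10-2\alpha}\tau_0\,\phi .
\]
Both $-\Delta_{S^3}$ and $T$ are diagonal in the spherical harmonic basis: on $\mathcal H_k$ one has $-\Delta_{S^3}=k(k+2)$, and by the Funk-Hecke theorem $T=\tau_k$ with $\tau_k>0$ and $\tau_k$ \emph{strictly decreasing} in $k$ (for $0<\alpha<3$ the consecutive ratio equals $\frac{k+\alpha/2}{k+3-\alpha/2}\in(0,1)$; equivalently $\tau_k/\tau_0=\Gamma(k+\tfrac{\alpha}{2})\Gamma(3-\tfrac{\alpha}{2})/[\Gamma(\tfrac{\alpha}{2})\Gamma(k+3-\tfrac{\alpha}{2})]$). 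Hence $\mathcal L$ acts on $\mathcal H_k$ as multiplication by
\[
\nu_k=k(k+2)+\tfrac34-(6-\alpha)c^{10-2\alpha}\tau_k-(5-\alpha)c^{10-2\alpha}\tau_0 ,
\]
and since $c^{10-2\alpha}\tau_0>0$, $6-\alpha>0$, $k(k+2)$ is strictly increasing and $\tau_k$ strictly decreasing, the map $k\mapsto\nu_k$ is strictly increasing. In particular at most one $k$ gives $\nu_k=0$, so $\ker\mathcal L\subseteq\mathcal H_{k}$ for that single $k$ (or $\ker\mathcal L=\{0\}$).

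To conclude, differentiating \eqref{lim2} in $\xi_1,\xi_2,\xi_3,\mu$ shows directly that $\partial_{\xi_i}w$ and $\partial_\mu w$ all lie in $\ker\ell$; by the dictionary above these correspond to $\omega_1,\dots,\omega_4$, which span $\mathcal H_1$, so $\mathcal H_1\subseteq\ker\mathcal L$ and in particular $\ker\mathcal L\neq\{0\}$. Combined with the monotonicity of $k\mapsto\nu_k$ this forces $\ker\mathcal L=\mathcal H_1$ (equivalently $\nu_1=0$, $\nu_k\neq0$ for $k\neq1$), hence
\[
\ker\ell=\mathrm{span}\Big\{\tfrac{\partial w}{\partial\xi_1},\tfrac{\partial w}{\partial\xi_2},\tfrac{\partial w}{\partial\xi_3},\tfrac{\partial w}{\partial\mu}\Big\},
\]
and undoing the initial scaling gives the lemma for general $w_{\mu,\xi}$.

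The genuinely nontrivial steps are establishing the two conformal identities cleanly (in particular checking that the nonlocal Hardy-Littlewood-Sobolev integral picks up no residual conformal factor, which is where the exponent $6-\alpha$ is pinned down) and the Funk-Hecke evaluation of the eigenvalues $\tau_k$ together with their strict monotonicity in $k$, which rests on standard Gegenbauer polynomial / Gamma function identities. A secondary point is to record that $\mathcal L=-\Delta_{S^3}+(\text{bounded rotation-invariant operator})$ is self-adjoint with compact resolvent on $H^1(S^3)$, so that the spherical harmonic expansion is legitimate and $\ker\mathcal L$ is genuinely the sum of the eigenspaces on which $\nu_k=0$; together with the elementary fact that the four bubble-derivatives are linearly independent elements of $\mathcal H_1$, the remainder is bookkeeping.
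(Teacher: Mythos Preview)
The paper does not supply its own proof of this lemma: it is stated with a direct citation to \cite[Theorem 1.5]{LLTX} and used as a black box. Your argument via stereographic projection to $S^3$, linearization at the constant, and Funk--Hecke diagonalization is the standard route to such nondegeneracy results and is essentially the method of the cited reference; the conformal identities, the eigenvalue formula $\tau_k/\tau_0=\Gamma(k+\tfrac{\alpha}{2})\Gamma(3-\tfrac{\alpha}{2})/[\Gamma(\tfrac{\alpha}{2})\Gamma(k+3-\tfrac{\alpha}{2})]$, and the strict monotonicity of $k\mapsto\nu_k$ are all correct as stated, so the conclusion $\ker\mathcal L=\mathcal H_1$ follows.

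One small point worth tightening: you should verify explicitly that the isomorphism $D^{1,2}(\mathbb{R}^3)\to H^1(S^3)$ carries $\ker\ell$ bijectively onto $\ker\mathcal L$, i.e.\ that no kernel element is lost or gained at the north pole $P$ (this is immediate from the decay $h\in D^{1,2}$ and the fact that $\mathcal H_1\subset C^\infty(S^3)$, but it is the only place where something could in principle leak). With that recorded, your proof is complete and self-contained, which is more than the paper offers.
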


The solutions we look for in Theorem \ref{th} have the form $u_\lambda(x)\sim w_{\mu,\xi}$, where $\mu$ is a small  positive number and $\xi\in \Omega$. It is naturally to correct this initial approximation by a term that provides Dirichlet boundary condition.
We define $\pi_{\mu,\xi}$ to be the unique solution of the problem
\begin{align}\label{pi}
 \left\{
  \begin{array}{ll}
  -\Delta \pi_{\mu,\xi}=\lambda\pi_{\mu,\xi}+\lambda w_{\mu,\xi}-\displaystyle\Big(\int_{\mathbb{R}^3\backslash \Omega}\frac{w_{\mu,\xi}^{6-\alpha}(y)}{|x-y|^\alpha}dy\Big)w_{\mu,\xi}^{5-\alpha},
  \ \  &\mbox{in}\ \Omega,\\
  \pi_{\mu,\xi}=-w_{\mu,\xi},
  \ \  &\mbox{on}\ \partial \Omega.
    \end{array}
    \right.
  \end{align}
Fix a small  positive number $\mu$ and a point $\xi\in \Omega$, we consider a first approximation of the solution of the form:
\begin{equation*}
  U_{\mu,\xi}(x)=w_{\mu,\xi}(x)+\pi_{\mu,\xi}(x).
\end{equation*}
Then $U=U_{\mu,\xi}$ satisfies the equation
\begin{align}\label{you}
 \left\{
  \begin{array}{ll}
  -\Delta U=\displaystyle\Big(\int_{\Omega}\frac{w_{\mu,\xi}^{6-\alpha}(y)}{|x-y|^\alpha}dy\Big)w_{\mu,\xi}^{5-\alpha}+\lambda U,
  \ \  &\mbox{in}\ \Omega,\\
  U=0,
  \ \  &\mbox{on}\ \partial \Omega.
    \end{array}
    \right.
  \end{align}

\section{Energy expansion}\label{Energy}
Solutions to \eqref{propro} correspond to critical points of the following energy functional
\begin{equation*}
  \mathcal{J}_\lambda(u)=\frac{1}{2}\int_{\Omega}|\nabla u|^2 dx-\frac{\lambda}{2}\int_{\Omega} u^2 dx-\frac{1}{2(6-\alpha)}\int_{\Omega}\int_{ \Omega}\frac{u^{6-\alpha}(y)u^{6-\alpha}(x)}{|x-y|^\alpha}dxdy.
\end{equation*}
Since we are looking for solutions close to $U_{\mu,\xi}$, formally, we expect $\mathcal{J}_\lambda(U_{\mu,\xi})$ to be almost critical in the parameters $\mu,\xi$. For this reason, it is important to obtain an asymptotic formula of the function $(\mu,\xi)\mapsto \mathcal{J}_\lambda(U_{\mu,\xi})$ as $\mu\rightarrow0$.

\begin{proposition}\label{enex}
For any $\sigma>0$, as $\mu\rightarrow0$, the following expansion holds:
\begin{equation*}
  \mathcal{J}_\lambda(U_{\mu,\xi})=a_0+a_1\mu g_\lambda(\xi)+a_2\lambda\mu^2 -a_3\mu^2 g_\lambda^2(\xi)+\mu^{\frac{5}{2}-\sigma}\theta(\mu,\xi),
\end{equation*}
for $i=0,1$, $j=0,1,2$, $i+j\leq 2$, and the function $\mu^j\frac{\partial ^{i+j}}{\partial \xi^i \partial \mu^j}\theta(\mu,\xi)$ is bounded uniformly on all small $\mu$ and $\xi$ in compact subsets of $\Omega$. The $a_j$'s are explicit constants, given by \eqref{changshu}.
\end{proposition}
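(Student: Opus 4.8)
The plan is to run a finite-dimensional-reduction energy expansion as in del Pino--Dolbeault--Musso \cite{dDM}, the only genuinely new point being the bookkeeping of the Choquard double integral. First, since $U_{\mu,\xi}\in H^1_0(\Omega)$ solves \eqref{you} exactly, I would test \eqref{you} against $U_{\mu,\xi}$; this expresses $\int_\Omega|\nabla U_{\mu,\xi}|^2$ through the nonlinear term plus $\lambda\int_\Omega U_{\mu,\xi}^2$, so the quadratic part cancels in $\mathcal J_\lambda$ and
\begin{equation*}
\mathcal J_\lambda(U_{\mu,\xi})=\frac12\int_\Omega\Big(\int_\Omega\frac{w_{\mu,\xi}^{6-\alpha}(y)}{|x-y|^\alpha}\,dy\Big)w_{\mu,\xi}^{5-\alpha}U_{\mu,\xi}\,dx-\frac{1}{2(6-\alpha)}\int_\Omega\!\int_\Omega\frac{U_{\mu,\xi}^{6-\alpha}(x)\,U_{\mu,\xi}^{6-\alpha}(y)}{|x-y|^\alpha}\,dx\,dy .
\end{equation*}
All $\lambda$-dependence is then carried by $\pi_{\mu,\xi}$, whose defining problem \eqref{pi} was tailored precisely so that $U_{\mu,\xi}$ solves \eqref{you}.

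Next I would establish a sharp expansion of $\pi_{\mu,\xi}$. Using $w_{\mu,\xi}(x)=4\pi\,3^{1/4}\mu^{1/2}\Gamma(x-\xi)+O(\mu^{5/2})$ for $x$ away from $\xi$, the fact that $\int_{\R^3\setminus\Omega}\frac{w_{\mu,\xi}^{6-\alpha}(y)}{|x-y|^\alpha}dy$ is of higher order for $\xi$ in compact subsets of $\Omega$, and comparing \eqref{pi} with the equation defining $H_\lambda(\cdot,\xi)$, the maximum principle together with standard elliptic estimates (cf.\ \cite[Lemmas A.1, A.2]{dDM}) should give
\begin{equation*}
\pi_{\mu,\xi}(x)=-4\pi\,3^{1/4}\mu^{1/2}H_\lambda(x,\xi)+\mu^{1/2}\Theta_{\mu,\xi}(x),
\end{equation*}
with $\Theta_{\mu,\xi}\to0$ and $\mu^{j}\partial_\xi^{i}\partial_\mu^{j}\Theta_{\mu,\xi}$ bounded uniformly on compact sets; in particular $\pi_{\mu,\xi}(\xi)=-4\pi\,3^{1/4}\mu^{1/2}g_\lambda(\xi)+o(\mu^{1/2})$.

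Then I would substitute $U_{\mu,\xi}=w_{\mu,\xi}+\pi_{\mu,\xi}$ and $U_{\mu,\xi}^{6-\alpha}=w_{\mu,\xi}^{6-\alpha}+(6-\alpha)w_{\mu,\xi}^{5-\alpha}\pi_{\mu,\xi}+(\text{quadratic in }\pi_{\mu,\xi})$ into the two integrals above and, after the rescaling $x=\xi+\mu z$, collect powers of $\mu$. By \eqref{lim2}, the $w_{\mu,\xi}$-only contribution gives the constant $a_0=\frac{5-\alpha}{2(6-\alpha)}\int_{\R^3}|\nabla w_{1,0}|^2=\frac{5-\alpha}{2(6-\alpha)}\iint_{\R^6}\frac{w_{1,0}^{6-\alpha}(x)w_{1,0}^{6-\alpha}(y)}{|x-y|^\alpha}\,dx\,dy$ (these being independent of $\mu,\xi$), the difference between integrating over $\Omega$ and over $\R^3$ being of higher order; the term linear in $\pi_{\mu,\xi}$ concentrates at $\xi$ at rate $\mu^{1/2}$ and, via the previous step, contributes $a_1\mu g_\lambda(\xi)$; the term quadratic in $\pi_{\mu,\xi}$ contributes $-a_3\mu^2g_\lambda^2(\xi)$, since $\pi_{\mu,\xi}^2\approx(4\pi\,3^{1/4})^2\mu g_\lambda^2(\xi)$ near $\xi$; and the second-order term in the $\mu$-expansion of the remaining integrals — in particular the $\mu^2$-correction of quantities such as $\int_\Omega w_{\mu,\xi}^2$, which re-enters carrying a factor $\lambda$ — gives $a_2\lambda\mu^2$. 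Everything not accounted for is $\mu^{5/2-\sigma}\theta(\mu,\xi)$, the loss $\mu^{-\sigma}$ absorbing a logarithm from the region $|x-\xi|\sim\mu$; the bounds on $\mu^{j}\partial_\xi^{i}\partial_\mu^{j}\theta$ then follow by differentiating these identities under the integral sign, since a $\partial_\mu$ or $\partial_{\xi_k}$ falling on $w_{\mu,\xi}$ costs a factor $\mu^{-1}$ after rescaling while preserving the bubble profile, and the parameter control on $\pi_{\mu,\xi}$ comes from the previous step.

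The hard part, I expect, is twofold. First, pushing the expansion of $\pi_{\mu,\xi}$ far enough to isolate both $\mu^2$-coefficients with the stated regularity in $(\mu,\xi)$; here the adjusted projection \eqref{pi} is what keeps the computation clean. Second, and specific to the Choquard nonlinearity, uniformly controlling all the nonlocal remainder integrals produced by $(w_{\mu,\xi}+\pi_{\mu,\xi})^{6-\alpha}$ with the non-integer exponent $6-\alpha\in(3,6)$: these do not split as in the local case, so each must be bounded by the Hardy--Littlewood--Sobolev inequality (Proposition \ref{HL}) and the Cauchy--Schwarz-type inequality \eqref{CS} — treating the convolution as an operator — after splitting $\Omega$ into the region $\{|x-\xi|\lesssim\mu\}$, where $|\pi_{\mu,\xi}|\lesssim w_{\mu,\xi}$, and its complement, where $w_{\mu,\xi}$ is comparable to $\mu^{1/2}\Gamma(x-\xi)$. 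Finally, checking that the several $O(\mu)$ pieces collapse exactly to $a_1\mu g_\lambda(\xi)$ — through Green-type identities for $G_\lambda$ and $H_\lambda$ — is the most laborious piece of the bookkeeping.
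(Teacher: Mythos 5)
Your algebraic starting point—testing \eqref{you} against $U_{\mu,\xi}$ so that the $\lambda\int U^2$ piece of $\mathcal J_\lambda$ cancels, leaving
$\mathcal J_\lambda(U_{\mu,\xi})=\tfrac12\int_\Omega\bigl(\int_\Omega\frac{w^{6-\alpha}(y)}{|x-y|^\alpha}dy\bigr)w^{5-\alpha}U_{\mu,\xi}-\tfrac1{2(6-\alpha)}\iint_{\Omega^2}\frac{U_{\mu,\xi}^{6-\alpha}U_{\mu,\xi}^{6-\alpha}}{|x-y|^\alpha}$—is correct and is a genuinely different bookkeeping from the paper's, which instead splits $\mathcal J_\lambda(U_{\mu,\xi})=I+\dots+VI$ and cancels boundary terms coming from testing \eqref{lim2}, \eqref{pi}, \eqref{you} separately against $w$ and $\pi$. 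Your route is tighter at the start (no boundary integrals to track), and Taylor-expanding the Choquard integrand in $\pi$ then produces, after the partial cancellation of the linear-in-$\pi$ terms, the residue $-\tfrac12\int\Phi w^{5-\alpha}\pi$ which is precisely the quantity the paper handles in its term $IV$.

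There are, however, two linked gaps. First, the expansion you posit, $\pi_{\mu,\xi}=-4\pi\,3^{1/4}\mu^{1/2}H_\lambda(\cdot,\xi)+\mu^{1/2}\Theta_{\mu,\xi}$ with merely $\Theta_{\mu,\xi}\to0$, is not sharp enough to reach the two $\mu^2$-coefficients. To extract $a_2\lambda\mu^2$ from $-\tfrac12\int w^5\pi$ you must identify the $O(\mu)$ part of $\Theta_{\mu,\xi}$ explicitly: the paper's Lemma~\ref{piex} isolates it as $\mu\,\mathcal D_0(\mu^{-1}(x-\xi))$, where $\mathcal D_0$ solves a $\lambda$-dependent entire problem, and one must also use the non-smooth Taylor term $\frac{\lambda}{8\pi}|x-\xi|$ inside $H_\lambda(x,\xi)$ near $x=\xi$; together these produce the coefficient $a_2$ in \eqref{changshu}. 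A remainder statement of the form ``$\Theta_{\mu,\xi}\to0$'' only controls the $O(\mu^{2})$ error, it does not compute the coefficient in front of $\mu^{2}$. Second, and relatedly, your attribution of $a_2\lambda\mu^2$ to ``the $\mu^2$-correction of $\int_\Omega w_{\mu,\xi}^2$'' is wrong and is internally inconsistent with your own first step: the $\lambda\int U^2$ piece was cancelled exactly, and moreover $\int_\Omega w_{\mu,\xi}^2$ is of order $\mu$, not $\mu^2$. Since, as you correctly note, all remaining $\lambda$-dependence sits in $\pi_{\mu,\xi}$, the explicit factor $\lambda$ in $a_2\lambda\mu^2$ must come from the $\lambda$-dependent next-order correction of $\pi_{\mu,\xi}$, read off from the sharp expansion above; without that expansion the term cannot be located, which is exactly what the paper's Lemma~\ref{piex} and the [dDM] reference supply.
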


To prove this Proposition, we need some preliminary results. To begin with, we recall the relationship between $\pi_{\mu,\xi}$ and $H_\lambda(x,\xi)$.
Let us consider the unique radial solution $\mathcal{D}_0(z)$ of the problem
\begin{align*}
 \left\{
  \begin{array}{ll}
  \displaystyle-\Delta \mathcal{D}_0=\lambda3^{1/4}\Big(\frac{1}{\sqrt{1+|z|^2}}-\frac{1}{|z|}\Big),\quad &\text{in $\mathbb{R}^3$}\\
  \mathcal{D}_0(z)\rightarrow 0,
  \ \  &\mbox{as $|z|\rightarrow +\infty$}.
    \end{array}
    \right.
  \end{align*}
Then $\mathcal{D}_0(z)$ is a $C^{0,1}$ function with $\mathcal{D}_0(z)\sim |z|^{-1}\log |z|$ as $|z|\rightarrow+\infty$.

\begin{lemma}\label{piex}
For any $\sigma>0$, as $\mu\rightarrow0$, the following expansion holds:
\begin{equation*}
  \mu^{-1/2}\pi_{\mu,\xi}(x)=-4\pi3^{1/4}H_\lambda(x,\xi)+\mu \mathcal{D}_0(\mu^{-1}({x-\xi}))+\mu^{2-\sigma}\theta(\mu,x,\xi),
\end{equation*}
for $i=0,1$, $j=0,1,2$, $i+j\leq 2$, and the function $\mu^j\frac{\partial ^{i+j}}{\partial \xi^i \partial \mu^j}\theta(\mu,x,\xi)$ is bounded uniformly on $x\in \Omega$, all small $\mu$ and $\xi$ in compact subsets of $\Omega$.
\end{lemma}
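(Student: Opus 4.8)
The plan is to derive the expansion of $\pi_{\mu,\xi}$ by identifying its "main part" with the regular part $H_\lambda$ of the Green function, plus a correction term $\mathcal D_0$ that accounts for the slow decay of $w_{\mu,\xi}$ in dimension three. First I would rescale: set $z = \mu^{-1}(x-\xi)$ and write $w_{\mu,\xi}(x) = \mu^{-1/2} U(z)$ with $U(z) = 3^{1/4}(1+|z|^2)^{-1/2}$, so that $w_{\mu,\xi}(x) = \mu^{1/2}\,3^{1/4}\,|x-\xi|^{-1} + \mu^{5/2}\,3^{1/4}\bigl(|x-\xi|^{-1}(1+\mu^{-2}|x-\xi|^2)^{-1/2} - \mu\,|x-\xi|^{-1}\cdot\text{(lower order)}\bigr)$; more precisely I would expand
\[
w_{\mu,\xi}(x) = 4\pi 3^{1/4}\,\mu^{1/2}\,\Gamma(x-\xi) + \mu^{5/2}\,3^{1/4}\Bigl(\tfrac{1}{\sqrt{\mu^2+|x-\xi|^2}} - \tfrac{1}{|x-\xi|}\Bigr)\cdot(\text{scaled}),
\]
keeping in mind $\Gamma(z) = \tfrac1{4\pi|z|}$. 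The point is that $w_{\mu,\xi}$ behaves near $\partial\Omega$ like $4\pi 3^{1/4}\mu^{1/2}\Gamma(x-\xi)$ to leading order, which is exactly the boundary data driving $H_\lambda$, while the next correction, after rescaling, solves the equation defining $\mathcal D_0$.

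Next I would set up the difference. Define $R_{\mu,\xi}(x) := \mu^{-1/2}\pi_{\mu,\xi}(x) + 4\pi 3^{1/4} H_\lambda(x,\xi) - \mu\,\mathcal D_0(\mu^{-1}(x-\xi))$ and compute the PDE it satisfies by subtracting the defining equations \eqref{pi}, the equation for $H_\lambda$, and the (rescaled) equation for $\mathcal D_0$. In $\Omega$ one gets $-\Delta R_{\mu,\xi} - \lambda R_{\mu,\xi}$ equal to a collection of remainder terms: (i) $\mu^{-1/2}$ times the nonlocal tail $\bigl(\int_{\mathbb R^3\setminus\Omega} w_{\mu,\xi}^{6-\alpha}(y)|x-y|^{-\alpha}dy\bigr)w_{\mu,\xi}^{5-\alpha}$, which is supported-estimated using that $\mathrm{dist}(\xi,\partial\Omega)$ is bounded below on compact subsets, so $w_{\mu,\xi}$ on $\mathbb R^3\setminus\Omega$ is $O(\mu^{5/2})$ pointwise and the whole term is $O(\mu^{\text{high}})$ — this contributes at order $\mu^{2-\sigma}$ or better; (ii) the discrepancy $\lambda\bigl(w_{\mu,\xi} - 4\pi 3^{1/4}\mu^{1/2}\Gamma(\cdot-\xi) - \mu^{3/2}3^{1/4}(\cdots)\bigr)$ coming from matching the forcing terms, which by the Taylor expansion of $(1+|z|^2)^{-1/2}$ is $O(\mu^{5/2-\sigma})$ after accounting for the logarithmic growth of $\mathcal D_0$; (iii) terms $\lambda\mu\mathcal D_0$ of lower order in $\mu$ that are absorbed. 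On $\partial\Omega$, the boundary values of $\mu^{-1/2}\pi_{\mu,\xi}$ ($=-\mu^{-1/2}w_{\mu,\xi}$) and of $4\pi 3^{1/4}H_\lambda(x,\xi)$ ($=4\pi 3^{1/4}\Gamma(x-\xi)$) cancel to leading order, leaving a boundary remainder of size $O(\mu^{2}|\log\mu|)$, and $\mu\mathcal D_0(\mu^{-1}(x-\xi))$ on $\partial\Omega$ is $O(\mu^2|\log\mu|)$ as well since $|x-\xi|$ is bounded below there. Then elliptic estimates (using that $\lambda < \lambda_1$, so $-\Delta - \lambda$ is invertible with Dirichlet data, plus $L^\infty$ bounds via the maximum principle or $W^{2,p}$ theory) give $\|R_{\mu,\xi}\|_{L^\infty(\Omega)} = O(\mu^{2-\sigma})$, which is the $C^0$ part of the claim.

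For the derivative bounds in $\mu$ and $\xi$, I would differentiate the defining equation \eqref{pi} with respect to $\xi_k$ and $\mu$. The functions $\partial_{\xi_k}\pi_{\mu,\xi}$ and $\mu\partial_\mu\pi_{\mu,\xi}$ satisfy problems of the same structure as \eqref{pi} but with right-hand sides and boundary data obtained by differentiating $w_{\mu,\xi}$ — crucially, $\partial_{\xi_k}w_{\mu,\xi}$ and $\mu\partial_\mu w_{\mu,\xi}$ have the same $\mu^{1/2}$-scaling and decay profile as $w_{\mu,\xi}$ itself (they are linear combinations of $w_{\mu,\xi}$ and $|x-\xi|^2$-weighted versions, all $O(\mu^{1/2})$ with controlled tails). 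So the same argument applied to the differentiated remainders $\mu^j\partial_\xi^i\partial_\mu^j R_{\mu,\xi}$ yields the claimed uniform bound $\mu^j\partial_\xi^i\partial_\mu^j\theta = O(1)$ for $i\le 1$, $j\le 2$, $i+j\le 2$. Here one uses that differentiating $H_\lambda(x,\xi)$ in $\xi$ and $\mathcal D_0(\mu^{-1}(x-\xi))$ in $\mu,\xi$ stays bounded on compact subsets, and that each $\mu$-derivative of the scaled profile costs a factor $\mu^{-1}$, exactly compensated by the $\mu^j$ prefactor.

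The main obstacle I anticipate is bookkeeping the logarithmic factor: in $\mathbb R^3$ the natural correction $\mathcal D_0$ grows like $|z|^{-1}\log|z|$, so on $\partial\Omega$ the term $\mu\mathcal D_0(\mu^{-1}(x-\xi))$ is $O(\mu^2\log(1/\mu))$ rather than $O(\mu^2)$, and similarly the matching of the $\lambda w_{\mu,\xi}$ forcing against the $\mathcal D_0$ equation leaves a residue with a $\log$. This is exactly why the statement is phrased with $\mu^{2-\sigma}$ for arbitrary $\sigma>0$ rather than $\mu^2$: one absorbs every $|\log\mu|$ into $\mu^{-\sigma}$. Making sure this absorption is done consistently through all the derivative estimates — where each $\mu$-differentiation can produce an extra $\log$ or an extra negative power of $\mu$ — is the delicate part; the nonlocal tail term (i), by contrast, is harmless because it is exponentially small in the sense of being $O(\mu^{k})$ for $k$ as large as one wants, since $w_{\mu,\xi}$ restricted to $\mathbb R^3\setminus\Omega$ decays like $\mu^{5/2-\alpha/2}$ times a bounded convolution, against a fixed positive distance to $\partial\Omega$.
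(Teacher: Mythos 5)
Your proposal follows the paper's own proof essentially step by step: the remainder you define, $R_{\mu,\xi}=\mu^{-1/2}\pi_{\mu,\xi}+4\pi 3^{1/4}H_\lambda(\cdot,\xi)-\mu\,\mathcal D_0(\mu^{-1}(\cdot-\xi))$, is exactly the quantity $S_1$ in the paper's proof of Lemma~\ref{piex}, and you then derive the PDE it solves, bound the forcing and the boundary data, invoke elliptic $W^{2,p}$ estimates, and repeat after applying $\partial_\xi$ and $\mu\partial_\mu$ to obtain the derivative bounds; your observation that the arbitrary $\sigma>0$ is there to absorb the $|\log\mu|$ coming from $\mathcal D_0(z)\sim |z|^{-1}\log|z|$ is also exactly the point.

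One quantitative slip to correct before the argument closes cleanly: you claim that $w_{\mu,\xi}$ on $\mathbb R^3\setminus\Omega$ is ``$O(\mu^{5/2})$ pointwise'' and hence that the nonlocal tail is ``$O(\mu^k)$ for $k$ as large as one wants.'' In fact $w_{\mu,\xi}(x)=3^{1/4}\bigl(\mu/(\mu^2+|x-\xi|^2)\bigr)^{1/2}$ is only $O(\mu^{1/2})$ for $|x-\xi|$ bounded away from zero, so $w_{\mu,\xi}^{6-\alpha}$ on $\mathbb R^3\setminus\Omega$ is $O(\mu^{(6-\alpha)/2})$ rather than superpolynomially small, and the factor $w_{\mu,\xi}^{5-\alpha}(x)$ in the tail term is \emph{large} (of size $\mu^{-(5-\alpha)/2}$) near $\xi$. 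What the paper actually proves is an $H^{-1}$-type bound
\[
\Bigl\|\Bigl(\int_{\mathbb R^3\setminus\Omega}\frac{w_{\mu,\xi}^{6-\alpha}(y)}{|x-y|^\alpha}\,dy\Bigr)w_{\mu,\xi}^{5-\alpha}\Bigr\|_{H_0^1(\Omega)}\le C\mu^{\frac{6-\alpha}{2}}
\]
via Hardy--Littlewood--Sobolev together with H\"older and Sobolev, which after dividing by $\mu^{1/2}$ contributes $O(\mu^{(5-\alpha)/2})$ to the equation for $S_1$. If you want the $L^\infty$ conclusion via $L^p$ elliptic theory, you need to make precise in which norm and for which $p$ this tail term is estimated; ``harmless because it is arbitrarily small'' is not the right reason it is controlled. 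The rest of your structure is identical to the paper's.
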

\begin{proof}
For any $\varphi\in H_0^1(\Omega)$, using \eqref{HLS}, the H\"{o}lder and Sobolev inequalities, we have
\begin{equation*}
  \bigg|\int_{\Omega}\int_{\mathbb{R}^3\backslash \Omega}\frac{w_{\mu,\xi}^{6-\alpha}(y)w_{\mu,\xi}^{5-\alpha}(x)\varphi(x)}{|x-y|^\alpha}dxdy\bigg|\leq C\Big(\int_{\mathbb{R}^3\backslash\Omega}w_{\mu,\xi}^{6}dx\Big)^{\frac{6-\alpha}{6}}\Big(\int_{\Omega}w_{\mu,\xi}^{\frac{6(5-\alpha)}{6-\alpha}}\varphi^{\frac{6}{6-\alpha}}dx\Big)^{\frac{6-\alpha}{6}}
  \leq C\mu^{\frac{6-\alpha}{2}}\|\varphi\|_{H_0^1(\Omega)}.
\end{equation*}
Hence, we obtain
\begin{equation*}
  \bigg\|\displaystyle\Big(\int_{\mathbb{R}^3\backslash \Omega}\frac{w_{\mu,\xi}^{6-\alpha}(y)}{|x-y|^\alpha}dy\Big)w_{\mu,\xi}^{5-\alpha}\bigg\|_{H_0^1(\Omega)}\leq C\mu^{\frac{6-\alpha}{2}},
\end{equation*}
and
\begin{align*}
 \left\{
  \begin{array}{ll}
  -\Delta \pi_{\mu,\xi}=\lambda\pi_{\mu,\xi}+\lambda w_{\mu,\xi}+O(\mu^{\frac{6-\alpha}{2}}),
  \ \  &\mbox{in}\ \Omega,\\
  \pi_{\mu,\xi}=-w_{\mu,\xi},
  \ \  &\mbox{on}\ \partial \Omega.
    \end{array}
    \right.
  \end{align*}
Set $\mathcal{D}_1(x)=\mu \mathcal{D}_0(\mu^{-1}(x-\xi))$, then
\begin{align*}
 \left\{
  \begin{array}{ll}
  -\Delta \mathcal{D}_1=\lambda\big(\mu^{-1/2}w_{\mu,\xi}(x)-4\pi 3^{1/4}\Gamma(x-\xi)\big),
  \quad &\text{in $\Omega$},\\
  \mathcal{D}_1\sim \mu^2\log \mu\quad as \,\,\mu\rightarrow 0,
  \ \  &\mbox{on $\partial \Omega$}.
    \end{array}
    \right.
  \end{align*}
Let us write
\begin{equation*}
  S_1(x)=\mu^{-1/2}\pi_{\mu,\xi}(x)+4\pi3^{1/4}H_\lambda(x,\xi)-\mathcal{D}_1(x).
\end{equation*}
With the notations of Lemma \ref{piex}, this means
\begin{equation*}
  S_1(x)=\mu^{2-\sigma}\theta(\mu,x,\xi).
\end{equation*}
Observe that for $y\in \partial \Omega$, as $\mu\rightarrow0$, we have
\begin{equation*}
  \mu^{-1/2}\pi_{\mu,\xi}(x)+4\pi3^{1/4}H_\lambda(x,\xi)=3^{1/4}\Big(\frac{1}{\sqrt{|\mu|^2+|x-\xi|^2}}-\frac{1}{|x-\xi|}\Big)\sim \mu^2|x-\xi|^{-3}.
\end{equation*}
Using the above equations, we find that $S_1$ satisfies
\begin{align*}
 \left\{
  \begin{array}{ll}
   \Delta S_1+\lambda S_1=-\lambda \mathcal{D}_1+O(\mu^{\frac{5-\alpha}{2}})=:\mathcal{D}_2,
  \ \  &\mbox{in}\ \Omega,\\
  S_1=O(\mu^2\log \mu)\quad as \,\,\mu\rightarrow 0,
  \ \  &\mbox{on $\partial \Omega$}.
    \end{array}
    \right.
  \end{align*}
For any $p>3$, we have
\begin{equation*}
  \int_{\Omega}|\mathcal{D}_1(x)|^pdx\leq \mu^{p+3}\int_{\mathbb{R}^3}|\mathcal{D}_0(z)|^pdz,
\end{equation*}
so $\|\mathcal{D}_2\|_{L^p(\Omega)}\leq C_p\mu^{(p+3)/p}+C\mu^{\frac{5-\alpha}{2}}$. Since $\alpha\in (0,3)$, applying elliptic estimates (see \cite{GT}), we know that, for any $\sigma>0$,
$\|S_1\|_{L^{\infty}(\Omega)}=O(\mu^{2-\sigma})$ uniformly on $\xi$ in compact subsets of $\Omega$. This yields the assertion of the lemma for $i,j=0$.

We now consider the quantity $S_2=\partial_{\xi}S_1$. Observe that $S_2$ satisfies
\begin{align*}
 \left\{
  \begin{array}{ll}
   \Delta S_2+\lambda S_2=-\lambda \partial_{\xi}\mathcal{D}_1,
  \ \  &\mbox{in}\ \Omega,\\
  S_2=O(\mu^2\log \mu)\quad as \,\,\mu\rightarrow 0,
  \ \  &\mbox{on $\partial \Omega$}.
    \end{array}
    \right.
  \end{align*}
Since $\partial_{\xi}\mathcal{D}_1(x)=-\nabla \mathcal{D}_0(\mu^{-1}(x-\xi))$, for any $p>3$, we have
\begin{equation*}
  \int_{\Omega}|\partial_{\xi}\mathcal{D}_1(x)|^pdx\leq \mu^{p+3}\int_{\mathbb{R}^3}|\nabla\mathcal{D}_0(z)|^pdz.
\end{equation*}
We conclude that $\|S_2\|_{L^{\infty}(\Omega)}=O(\mu^{2-\sigma})$ for any $\sigma>0$. This gives the proof of the lemma for $i=1$, $j=0$. Let us set
$S_3=\mu\partial_{\mu}S_1$, then
\begin{align*}
 \left\{
  \begin{array}{ll}
   \Delta S_3+\lambda S_3=-\lambda \mu\partial_{\mu}\mathcal{D}_1+O(\mu^{\frac{5-\alpha}{2}})=:\mathcal{D}_3,
  \ \  &\mbox{in}\ \Omega,\\
  S_3=O(\mu^2\log \mu)\quad as \,\,\mu\rightarrow 0,
  \ \  &\mbox{on $\partial \Omega$}.
    \end{array}
    \right.
  \end{align*}
Observed that
\begin{equation*}
  \mu\partial_{\mu}\mathcal{D}_1=\mu(\mathcal{D}_0+\tilde{\mathcal{D}}_0)(\mu^{-1}(x-\xi)),
\end{equation*}
where $\tilde{\mathcal{D}}_0(z)=z\cdot\nabla \mathcal{D}_0(z)$. Thus, similar to the estimate for $S_1$, we obtain
$\|S_3\|_{L^{\infty}(\Omega)}=O(\mu^{2-\sigma})$ for any $\sigma>0$. This yields the assertion of the lemma for $i=0$, $j=1$.
The proof of the remaining estimates comes after applying again $\mu \partial_\mu$ to the equations obtained for $S_2$ and $S_3$, and the desired result comes after exactly the similar arguments.
This concludes the proof.
\end{proof}
\noindent {\bf Proof of Proposition \ref{enex}.}
Let us decompose:
\begin{equation*}
  \mathcal{J}_\lambda(U_{\mu,\xi})=I+II+III+IV+V+VI,
\end{equation*}
\begin{align*}
  I=&\frac{1}{2}\int_{\Omega}|\nabla w_{\mu,\xi}|^2 dx-\frac{1}{2(6-\alpha)}\int_{\Omega}\int_{ \Omega}\frac{w_{\mu,\xi}^{6-\alpha}(y)w_{\mu,\xi}^{6-\alpha}(x)}{|x-y|^\alpha}dxdy,
  \\II=&\int_{\Omega} \nabla w_{\mu,\xi}\cdot \nabla \pi_{\mu,\xi}-\int_{\Omega}\int_{ \Omega}\frac{w_{\mu,\xi}^{6-\alpha}(y)w_{\mu,\xi}^{5-\alpha}(x)\pi_{\mu,\xi}(x)}{|x-y|^\alpha}dxdy,\\
  III=&\frac{1}{2}\int_{\Omega}|\nabla \pi_{\mu,\xi}|^2 dx-\frac{\lambda}{2}\int_{\Omega} (w_{\mu,\xi}+\pi_{\mu,\xi})\pi_{\mu,\xi} dx,\\
  IV=&-\frac{\lambda}{2}\int_{\Omega} (w_{\mu,\xi}+\pi_{\mu,\xi})w_{\mu,\xi} dx,\\
  V=&-\frac{5-\alpha}{2}\int_{\Omega}\int_{ \Omega}\frac{w_{\mu,\xi}^{6-\alpha}(y)w_{\mu,\xi}^{4-\alpha}(x)\pi^2_{\mu,\xi}(x)}{|x-y|^\alpha}dxdy
  -\frac{6-\alpha}{2}\int_{\Omega}\int_{ \Omega}\frac{w_{\mu,\xi}^{5-\alpha}(y)\pi_{\mu,\xi}(y)w_{\mu,\xi}^{5-\alpha}(x)\pi_{\mu,\xi}(x)}{|x-y|^\alpha}dxdy,\\
  VI=&-\frac{1}{2(6-\alpha)}\int_{\Omega}\int_{ \Omega}\frac{Long}{|x-y|^\alpha}dxdy,
\end{align*}
where
\begin{align*}
  Long=&(w_{\mu,\xi}+\pi_{\mu,\xi})^{6-\alpha}(y)(w_{\mu,\xi}+\pi_{\mu,\xi})^{6-\alpha}(x)-w_{\mu,\xi}^{6-\alpha}(y)w_{\mu,\xi}^{6-\alpha}(x)-2(6-\alpha)w_{\mu,\xi}^{6-\alpha}(y)w_{\mu,\xi}^{5-\alpha}(x)\pi_{\mu,\xi}(x)
 \\& -(6-\alpha)(5-\alpha)w_{\mu,\xi}^{6-\alpha}(y)w_{\mu,\xi}^{4-\alpha}(x)\pi^2_{\mu,\xi}(x)
  -(6-\alpha)^2w_{\mu,\xi}^{5-\alpha}(y)\pi_{\mu,\xi}(y)w_{\mu,\xi}^{5-\alpha}(x)\pi_{\mu,\xi}(x).
\end{align*}
Multiplying  \eqref{lim2}
by $w_{\mu,\xi}$ and integrating by parts in $\Omega$, by \eqref{HLS} and \eqref{lim1}, we obtain
\begin{align*}
  I=&\frac{1}{2}\int_{\partial\Omega}\frac{\partial w_{\mu,\xi}}{\partial \nu}w_{\mu,\xi}dx+\frac{5-\alpha}{2(6-\alpha)}\int_{\Omega}\int_{ \mathbb{R}^3}\frac{w_{\mu,\xi}^{6-\alpha}(y)w_{\mu,\xi}^{6-\alpha}(x)}{|x-y|^\alpha}dxdy+\frac{1}{2(6-\alpha)}\int_{\Omega}\int_{ \mathbb{R}^3\backslash\Omega}\frac{w_{\mu,\xi}^{6-\alpha}(y)w_{\mu,\xi}^{6-\alpha}(x)}{|x-y|^\alpha}dxdy\\
  =&\frac{1}{2}\int_{\partial\Omega}\frac{\partial w_{\mu,\xi}}{\partial \nu}w_{\mu,\xi}dx+\frac{5-\alpha}{2(6-\alpha)}\int_{\mathbb{R}^3}\int_{ \mathbb{R}^3}\frac{w_{\mu,\xi}^{6-\alpha}(y)w_{\mu,\xi}^{6-\alpha}(x)}{|x-y|^\alpha}dxdy+O(\mu^3)\\
  =&\frac{1}{2}\int_{\partial\Omega}\frac{\partial w_{\mu,\xi}}{\partial \nu}w_{\mu,\xi}dx+\frac{5-\alpha}{2(6-\alpha)}\int_{\mathbb{R}^3}w^6_{\mu,\xi}dx+O(\mu^3),
\end{align*}
where $\nu$ denotes the outward unit normal vector of
$\partial \Omega$. Testing \eqref{lim2}
against $\pi_{\mu,\xi}$, by Lemma \ref{piex},
 we find
\begin{align*}
  II=-\int_{\partial\Omega}\frac{\partial w_{\mu,\xi}}{\partial \nu}w_{\mu,\xi}dx+\int_{\Omega}\int_{ \mathbb{R}^3\backslash \Omega}\frac{w_{\mu,\xi}^{6-\alpha}(y)w_{\mu,\xi}^{5-\alpha}(x)\pi_{\mu,\xi}(x)}{|x-y|^\alpha}dxdy.
  =&-\int_{\partial\Omega}\frac{\partial w_{\mu,\xi}}{\partial \nu}w_{\mu,\xi}dx+O(\mu^{\frac{5}{2}}).
\end{align*}
Testing \eqref{pi} against $\pi_{\mu,\xi}$, we get
\begin{align*}
  III=-\frac{1}{2}\int_{\partial\Omega}\frac{\partial \pi_{\mu,\xi}}{\partial \nu}w_{\mu,\xi}dx-\frac{1}{2}\int_{\Omega}\int_{ \mathbb{R}^3\backslash \Omega}\frac{w_{\mu,\xi}^{6-\alpha}(y)w_{\mu,\xi}^{5-\alpha}(x)\pi_{\mu,\xi}(x)}{|x-y|^\alpha}dxdy=-\frac{1}{2}\int_{\partial\Omega}\frac{\partial \pi_{\mu,\xi}}{\partial \nu}w_{\mu,\xi}dx+O(\mu^{\frac{5}{2}}).
\end{align*}
Multiplying \eqref{you} by $\pi_{\mu,\xi}$,  we get
\begin{align*}
  IV=&\frac{1}{2}\int_{\partial\Omega}\frac{\partial U_{\mu,\xi}}{\partial \nu}w_{\mu,\xi}dx-\frac{1}{2}\int_{\Omega}\int_{ \Omega}\frac{w_{\mu,\xi}^{6-\alpha}(y)w_{\mu,\xi}^{5-\alpha}(x)\pi_{\mu,\xi}(x)}{|x-y|^\alpha}dxdy-\frac{1}{2}\int_{\Omega}\int_{ \mathbb{R}^3\backslash \Omega}\frac{w_{\mu,\xi}^{6-\alpha}(y)w_{\mu,\xi}^{5-\alpha}(x)U_{\mu,\xi}(x)}{|x-y|^\alpha}dxdy\\
  =&\frac{1}{2}\int_{\partial\Omega}\frac{\partial U_{\mu,\xi}}{\partial \nu}w_{\mu,\xi}dx-\frac{1}{2}\int_{\Omega}\int_{ \mathbb{R}^3}\frac{w_{\mu,\xi}^{6-\alpha}(y)w_{\mu,\xi}^{5-\alpha}(x)\pi_{\mu,\xi}(x)}{|x-y|^\alpha}dxdy+O(\mu^{\frac{5}{2}})\\
  =&\frac{1}{2}\int_{\partial\Omega}\frac{\partial U_{\mu,\xi}}{\partial \nu}w_{\mu,\xi}dx-\frac{1}{2}\int_{\Omega}w_{\mu,\xi}^{5}\pi_{\mu,\xi}dx+O(\mu^{\frac{5}{2}}).
\end{align*}
And
\begin{align*}
  V=&-\frac{5-\alpha}{2}\int_{\Omega}\int_{ \Omega}\frac{w_{\mu,\xi}^{6-\alpha}(y)w_{\mu,\xi}^{4-\alpha}(x)\pi^2_{\mu,\xi}(x)}{|x-y|^\alpha}dxdy
  -\frac{6-\alpha}{2}\int_{\Omega}\int_{ \Omega}\frac{w_{\mu,\xi}^{5-\alpha}(y)\pi_{\mu,\xi}(y)w_{\mu,\xi}^{5-\alpha}(x)\pi_{\mu,\xi}(x)}{|x-y|^\alpha}dxdy\\
  =&-\frac{5-\alpha}{2}\int_{\Omega}w_{\mu,\xi}^{4}\pi^2_{\mu,\xi}dx-\frac{6-\alpha}{2}\int_{\Omega}\int_{ \Omega}\frac{w_{\mu,\xi}^{5-\alpha}(y)\pi_{\mu,\xi}(y)w_{\mu,\xi}^{5-\alpha}(x)\pi_{\mu,\xi}(x)}{|x-y|^\alpha}dxdy+O(\mu^{\frac{7}{2}}).
\end{align*}
As for $VI$, by \eqref{HLS}-\eqref{lim2}, we have
\begin{align*}
  |VI|\leq & C\bigg|\int_{\Omega}\int_{ \Omega}\frac{w_{\mu,\xi}^{6-\alpha}(y)w_{\mu,\xi}^{3-\alpha}(x)\pi^3_{\mu,\xi}(x)}{|x-y|^\alpha}dxdy\bigg|+C\bigg|\int_{\Omega}\int_{ \Omega}\frac{w_{\mu,\xi}^{5-\alpha}(y)\pi_{\mu,\xi}(y)w_{\mu,\xi}^{4-\alpha}(x)\pi^2_{\mu,\xi}(x)}{|x-y|^\alpha}dxdy\bigg|\\
  \leq & C\Big|\int_{\Omega}w^3_{\mu,\xi}\pi^3_{\mu,\xi}dx\Big|
  +C\Big(\int_{\Omega}w^{\frac{6(5-\alpha)}{6-\alpha}}_{\mu,\xi}\pi^{\frac{6}{6-\alpha}}_{\mu,\xi}dx\Big)^{\frac{6-\alpha}{6}}
  \Big(\int_{\Omega}w^{\frac{6(4-\alpha)}{6-\alpha}}_{\mu,\xi}\pi^{\frac{12}{6-\alpha}}_{\mu,\xi}dx\Big)^{\frac{6-\alpha}{6}}\\
  =&C\mu^3\Big|\int_{\Omega_\mu}w^{3}_{1,0}(z)\big[\mu^{-1/2}\pi_{\mu,\xi}(\xi+\mu z)\big]^{3}dz\Big|
  \\&+C\mu^3\Big(\int_{\Omega_\mu}w^{\frac{6(5-\alpha)}{6-\alpha}}_{1,0}(z)\big[\mu^{-1/2}\pi_{\mu,\xi}(\xi+\mu z)\big]^{\frac{6}{6-\alpha}}dz\Big)^{\frac{6-\alpha}{6}}\Big(\int_{\Omega_\mu}w^{\frac{6(4-\alpha)}{6-\alpha}}_{1,0}(z)\big[\mu^{-1/2}\pi_{\mu,\xi}(\xi+\mu z)\big]^{\frac{12}{6-\alpha}}dz\Big)^{\frac{6-\alpha}{6}}\\
  \leq &C \mu^{\frac{5}{2}},
\end{align*}
where $\Omega_\mu={\mu}^{-1}(\Omega-\xi)$.

From \cite[Lemma 2.1]{dDM}, we know
\begin{equation*}
  \int_{\Omega}w_{\mu,\xi}^{5}\pi_{\mu,\xi}dx=-4\pi3^{1/4}\mu g_\lambda(\xi)\int_{\mathbb{R}^3}w_{1,0}^5(x)dx-3^{1/4}\lambda\mu^2\int_{\mathbb{R}^3}
  \Big[w_{1,0}(x)\Big(\frac{1}{|x|}-\frac{1}{\sqrt{1+|x|^2}}\Big)+\frac{1}{2}w_{1,0}^5(x)|x|\Big]dx+R_1,
\end{equation*}
\begin{equation*}
  \int_{\Omega}w_{\mu,\xi}^{4}\pi^2_{\mu,\xi}dx=16\pi^23^{1/2}\mu^2 g^2_\lambda(\xi)\int_{\mathbb{R}^3}w_{1,0}^4(x)dx+R_2,
\end{equation*}
with
\begin{equation*}
  \mu^j\frac{\partial ^{i+j}}{\partial \xi^i \partial \mu^j}R_l=O(\mu^{3-\sigma}),
\end{equation*}
for $l=1,2$, $i=0,1$, $j=0,1,2$, $i+j\leq 2$, uniformly on all small $\mu$ and $\xi$ in compact subsets of $\Omega$.
Moreover, by Lemma \ref{piex}, we have the following expansion
\begin{align*}
  \mu^{-1/2}\pi_{\mu,\xi}(\xi+\mu z)=&-4\pi3^{1/4}g_\lambda(\xi)-\frac{ 3^{1/4}\lambda \mu}{2}|z|-4\pi3^{1/4}\theta_1(\xi,\xi+\mu z)
+\mu \mathcal{D}_0(z)+\mu^{2-\sigma}\theta(\mu,\xi+\mu z,\xi)\\
=:&-4\pi3^{1/4}g_\lambda(\xi)+\delta_\mu(z),
\end{align*}
where $\theta_1$ is a function of class $C^2$ with $\theta_1(\xi,\xi)=0$.
From these facts, we obtain
\begin{align*}
 & \int_{\Omega}\int_{ \Omega}\frac{w_{\mu,\xi}^{5-\alpha}(y)\pi_{\mu,\xi}(y)w_{\mu,\xi}^{5-\alpha}(x)\pi_{\mu,\xi}(x)}{|x-y|^\alpha}dxdy\\
  =&
  \mu^2\int_{\Omega_\mu}\int_{ \Omega_\mu}\frac{w_{1,0}^{5-\alpha}(y')\mu^{-1/2}\pi_{\mu,\xi}(\xi+\mu y')w_{1,0}^{5-\alpha}(x')\mu^{-1/2}\pi_{\mu,\xi}(\xi+\mu x')}{|x'-y'|^\alpha}dx'dy'
  \\
  =&16\pi^23^{1/2}\mu^2 g^2_\lambda(\xi)\int_{\mathbb{R}^3}\int_{\mathbb{R}^3}\frac{w_{1,0}^{5-\alpha}(y')w_{1,0}^{5-\alpha}(x')}{|x'-y'|^\alpha}dx'dy'+R_3,
\end{align*}
where
\begin{align*}
  R_3=&-2\mu^2\int_{\Omega_\mu}\int_{\Omega_\mu}\frac{w_{1,0}^{5-\alpha}(y')\delta_\mu(y')w_{1,0}^{5-\alpha}(x')4\pi3^{1/4} g_\lambda(\xi)}{|x'-y'|^\alpha}dx'dy'\\
  &+\mu^2\int_{\Omega_\mu}\int_{\Omega_\mu}\frac{w_{1,0}^{5-\alpha}(y')\delta_\mu(y')w_{1,0}^{5-\alpha}(x')\delta_\mu(x')}{|x'-y'|^\alpha}dx'dy'
\\
&-\Big(16\pi^23^{1/2}\mu^2 g^2_\lambda(\xi)\int_{\mathbb{R}^3}\int_{\mathbb{R}^3}\frac{w_{1,0}^{5-\alpha}(y')w_{1,0}^{5-\alpha}(x')}{|x'-y'|^\alpha}dx'dy'\\&-16\pi^23^{1/2}\mu^2 g^2_\lambda(\xi)\int_{ \Omega_\mu}\int_{ \Omega_\mu}\frac{w_{1,0}^{5-\alpha}(y')w_{1,0}^{5-\alpha}(x')}{|x'-y'|^\alpha}dx'dy'\Big)\\
=:&-R_{31}+R_{32}-R_{33}.
\end{align*}
By \eqref{lim1}, \eqref{lim2} and the elementary inequality, we know
\begin{align*}
  |R_{32}|\leq& \frac{\mu^2}{2}\Big(\int_{\Omega_\mu}\int_{\Omega_\mu}\frac{w_{1,0}^{6-\alpha}(y')w_{1,0}^{4-\alpha}(x')\delta^2_\mu(x')}{|x'-y'|^\alpha}dx'dy'
  +\int_{\Omega_\mu}\int_{\Omega_\mu}\frac{w_{1,0}^{4-\alpha}(y')\delta^2_\mu(y')w_{1,0}^{6-\alpha}(x')}{|x'-y'|^\alpha}dx'dy'\Big)\\
  = \mu^2&\int_{\Omega_\mu}\int_{\Omega_\mu}\frac{w_{1,0}^{6-\alpha}(y')w_{1,0}^{4-\alpha}(x')\delta^2_\mu(x')}{|x'-y'|^\alpha}dx'dy'
  \leq\mu^2\int_{\Omega_\mu}w_{1,0}^{4}\delta^2_\mu dx\leq C|R_2|.
\end{align*}
This with \eqref{CS} yields that
\begin{equation*}
  |R_{31}|\leq C\mu |R_2|^{1/2}\Big( \int_{\mathbb{R}^3}\int_{\mathbb{R}^3}\frac{w_{1,0}^{5-\alpha}(y')w_{1,0}^{5-\alpha}(x')}{|x'-y'|^\alpha}dx'dy'\Big)^{\frac{1}{2}}\leq C \mu|R_2|^{1/2}.
\end{equation*}
Besides, using \eqref{HLS}, we have
\begin{equation*}
  |R_{33}|\leq C\mu^2\Big(\int_{\mathbb{R}^3\backslash \Omega_\mu}w_{1,0}^{\frac{6(5-\alpha)}{6-\alpha}}dx\Big)^{\frac{6-\alpha}{3}},
\end{equation*}
a similar argument of \cite[Lemma 2.1]{dDM} shows that
\begin{equation*}
  \mu^j\frac{\partial ^{i+j}}{\partial \xi^i \partial \mu^j}R_{33}=O(\mu^{\frac{5}{2}-\sigma}),
\end{equation*}
for $i=0,1$, $j=0,1,2$, $i+j\leq 2$, uniformly on all small $\mu$ and $\xi$ in compact subsets of $\Omega$.
Thus, we have
\begin{equation*}
  \mu^j\frac{\partial ^{i+j}}{\partial \xi^i \partial \mu^j}R_3=O(\mu^{\frac{5}{2}-\sigma}),
\end{equation*}
for $i=0,1$, $j=0,1,2$, $i+j\leq 2$, uniformly on all small $\mu$ and $\xi$ in compact subsets of $\Omega$.

Therefore, By \eqref{lim1}, \eqref{lim2} and the definition of $S_{H,L}$, we get
\begin{equation*}
  \mathcal{J}_\lambda(U_{\mu,\xi})=a_0+a_1\mu g_\lambda(\xi)+a_2\lambda\mu^2 -a_3\mu^2 g_\lambda^2(\xi)+\mu^{\frac{5}{2}-\sigma}\theta(\mu,\xi),
\end{equation*}
where for $i=0,1$, $j=0,1,2$, $i+j\leq 2$, the function $\mu^j\frac{\partial ^{i+j}}{\partial \xi^i \partial \mu^j}\theta(\mu,\xi)$ is bounded uniformly on all small $\mu$ and $\xi$ in compact subsets of $\Omega$,
and
\begin{align}\label{changshu}
 \left\{
  \begin{array}{ll}
  a_0=& \displaystyle\frac{5-\alpha}{2(6-\alpha)}S_{H,L}^{\frac{6-\alpha}{5-\alpha}},\\
 a_1=&\displaystyle2\pi3^{1/4}\int_{\mathbb{R}^3}w_{1,0}^5(x)dx,\\
 a_2=&\displaystyle\frac{3^{1/4}}{2}\int_{\mathbb{R}^3}
  \Big[w_{1,0}(x)\Big(\frac{1}{|x|}-\frac{1}{\sqrt{1+|x|^2}}\Big)+\frac{1}{2}w_{1,0}^5(x)|x|\Big]dx,\\
  a_3=&\displaystyle8(5-\alpha)\pi^23^{1/2}\int_{\mathbb{R}^3}w_{1,0}^4(x)dx+8(6-\alpha)\pi^23^{1/2}\int_{\mathbb{R}^3}\int_{\mathbb{R}^3}\frac{w_{1,0}^{5-\alpha}(y)w_{1,0}^{5-\alpha}(x)}{|x-y|^\alpha}dxdy.
    \end{array}
    \right.
 \end{align}
This ends the proof of Lemma \ref{enex}.
\qed

\section{Reduction argument}\label{Reduction}

Let $u$ be a solution of \eqref{propro}. For any $\varepsilon>0$, we define
\begin{equation*}
  v(x)=\varepsilon^{1/2}u(\varepsilon x).
\end{equation*}
Then $v$ solves the following problem
\begin{align}\label{propro'}
 \left\{
  \begin{array}{ll}
  -\Delta v=\displaystyle\Big(\int_{\Omega_\varepsilon}\frac{v^{6-\alpha}(y)}{|x-y|^\alpha}dy\Big)v^{5-\alpha}+\lambda\varepsilon^2 v,
  \ \  &\mbox{in}\ \Omega_\varepsilon,\\
  v=0,
  \ \  &\mbox{on}\ \partial \Omega_\varepsilon,
    \end{array}
    \right.
  \end{align}
where $\Omega_\varepsilon=\varepsilon^{-1}\Omega$. Define
\begin{equation*}
  \mathcal{I}_\lambda(v)=\frac{1}{2}\int_{\Omega_\varepsilon}|\nabla v|^2 dx-\frac{\lambda \varepsilon^2}{2}\int_{\Omega_\varepsilon} v^2 dx-\frac{1}{2(6-\alpha)}\int_{\Omega_\varepsilon}\int_{ \Omega_\varepsilon}\frac{v^{6-\alpha}(y)v^{6-\alpha}(x)}{|x-y|^\alpha}dxdy,
\end{equation*}
and
\begin{equation*}
V(x)=\varepsilon^{1/2}U_{\mu,\xi}(\varepsilon x)=w_{\mu',\xi'}(x)+\varepsilon^{1/2}\pi_{\mu,\xi}(\varepsilon x), \quad \mu'=\frac{\mu}{\varepsilon},\quad \xi'=\frac{\xi}{\varepsilon},\quad  \text{$x\in \Omega_\varepsilon$},
\end{equation*}
then $V$ satisfies
\begin{align}\label{propro''}
 \left\{
  \begin{array}{ll}
  -\Delta V=\displaystyle\Big(\int_{\Omega_\varepsilon}\frac{w_{\mu',\xi'}^{6-\alpha}(y)}{|x-y|^\alpha}dy\Big)w_{\mu',\xi'}^{5-\alpha}+\lambda\varepsilon^2 V,
  \ \  &\mbox{in}\ \Omega_\varepsilon,\\
  V=0,
  \ \  &\mbox{on}\ \partial \Omega_\varepsilon.
    \end{array}
    \right.
  \end{align}
Thus finding a solution of \eqref{propro} which is a small perturbation of $U_{\mu,\xi}$ is equivalent to finding a solution of \eqref{propro'} of the form:
\begin{equation*}
  V+\phi,
\end{equation*}
where $\phi$ is small in some  appropriate sense. This is equivalent to finding $\phi$ such that
\begin{align}\label{propro'''}
 \left\{
  \begin{array}{ll}
  L(\phi)=N(\phi)+E,
  \ \  &\mbox{in}\ \Omega_\varepsilon,\\
  \phi=0,
  \ \  &\mbox{on}\ \partial \Omega_\varepsilon,
    \end{array}
    \right.
  \end{align}
where
\begin{equation*}
  L(\phi)=-\Delta \phi -\lambda \varepsilon^2 \phi-(6-\alpha)\displaystyle\Big(\int_{\Omega_\varepsilon}\frac{V^{5-\alpha}(y)\phi(y)}{|x-y|^\alpha}dy\Big)V^{5-\alpha}-
  (5-\alpha)\displaystyle\Big(\int_{\Omega_\varepsilon}\frac{V^{6-\alpha}(y)}{|x-y|^\alpha}dy\Big)V^{4-\alpha}\phi,
\end{equation*}
\begin{align*}
  N(\phi)=&\displaystyle\Big(\int_{\Omega_\varepsilon}\frac{(V+\phi)^{6-\alpha}(y)}{|x-y|^\alpha}dy\Big)(V+\phi)^{5-\alpha}
  -\displaystyle\Big(\int_{\Omega_\varepsilon}\frac{V^{6-\alpha}(y)}{|x-y|^\alpha}dy\Big)V^{5-\alpha}\\
  &-(6-\alpha)\displaystyle\Big(\int_{\Omega_\varepsilon}\frac{V^{5-\alpha}(y)\phi(y)}{|x-y|^\alpha}dy\Big)V^{5-\alpha}-
  (5-\alpha)\displaystyle\Big(\int_{\Omega_\varepsilon}\frac{V^{6-\alpha}(y)}{|x-y|^\alpha}dy\Big)V^{4-\alpha}\phi,
\end{align*}
and
\begin{equation*}
  E=\displaystyle\Big(\int_{\Omega_\varepsilon}\frac{V^{6-\alpha}(y)}{|x-y|^\alpha}dy\Big)V^{5-\alpha}-\displaystyle\Big(\int_{\Omega_\varepsilon}\frac{w_{\mu',\xi'}^{6-\alpha}(y)}{|x-y|^\alpha}dy\Big)w_{\mu',\xi'}^{5-\alpha}.
\end{equation*}

By a direct computation, we have
\begin{equation}\label{gu2}
  \frac{\partial w_{\mu,\xi}}{\partial \mu}=\frac{3^{1/4}}{2}\frac{|x-\xi|^2-\mu^2}{\mu^{\frac{1}{2}}(\mu^2+|x-\xi|^2)^{\frac{3}{2}}}=O(\frac{w_{\mu,\xi}}{\mu}),
\end{equation}
and
\begin{equation}\label{gu1}
  \frac{\partial w_{\mu,\xi}}{\partial \xi_i}=-3^{1/4}\mu^{1/2}\frac{x_i-\xi_i}{(\mu^2+|x-\xi|^2)^{\frac{3}{2}}}=O(\frac{w_{\mu,\xi}}{\mu}),\quad \text{for $i=1,2,3$}.
\end{equation}
Moreover, by Lemma \ref{piex}, we have
\begin{equation}\label{gu3}
 \Big|\frac{\partial [\varepsilon^{1/2}\pi_{\mu,\xi}(\varepsilon x)]}{\partial \mu'}\Big|=O(\varepsilon)\quad \text{and}\quad
  \Big| \frac{\partial [\varepsilon^{1/2}\pi_{\mu,\xi}(\varepsilon x)]}{\partial \xi'_i}\Big|=O(\varepsilon^2),\quad \text{for $i=1,2,3$}.
\end{equation}
Then we have the following lemmas regarding $N(\phi)$ and $E$.
\begin{lemma}\label{non}
For any $\varepsilon>0$, if there exists $\delta>0$ such that
\begin{equation*}
  dist(\xi',\partial \Omega_\varepsilon)>\frac{\delta}{\varepsilon}\quad \text{and}\quad \mu'\in (\delta,\delta^{-1}),
\end{equation*}
then there holds
\begin{equation*}
  \|N(\phi)\|_{H_0^1(\Omega_\varepsilon)}\leq C\|\phi\|^2_{H_0^1(\Omega_\varepsilon)}.
\end{equation*}
\end{lemma}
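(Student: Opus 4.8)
The plan is to estimate the $H_0^1(\Omega_\varepsilon)$-norm of $N(\phi)$ by duality: for any test function $\psi\in H_0^1(\Omega_\varepsilon)$, I will bound $\int_{\Omega_\varepsilon}\nabla N(\phi)\cdot\nabla\psi$ — more precisely, since $N(\phi)$ is regarded as an element of $H_0^1$ through the operator $(-\Delta)^{-1}$ applied to the nonlinear expression, I will bound the pairing of the right-hand side defining $N(\phi)$ against $\psi$ and take the supremum over $\|\psi\|_{H_0^1(\Omega_\varepsilon)}\le 1$. The quantity $N(\phi)$ is, by Taylor expansion, the second-order remainder of the nonlocal nonlinearity $u\mapsto\big(\int_{\Omega_\varepsilon}|u|^{6-\alpha}|x-y|^{-\alpha}\,dy\big)|u|^{5-\alpha}u$ around $V$. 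Expanding the product of the two factors $(V+\phi)^{6-\alpha}$ and $(V+\phi)^{5-\alpha}$ and cancelling the zeroth- and first-order terms (which are exactly what is subtracted off in the definition of $N(\phi)$), one is left with a finite sum of terms each of which is a double integral of the schematic form
\begin{equation*}
  \int_{\Omega_\varepsilon}\int_{\Omega_\varepsilon}\frac{V^{a}(y)|\phi(y)|^{b}\,V^{c}(x)|\phi(x)|^{d}|\psi(x)|}{|x-y|^\alpha}\,dx\,dy,
  \qquad a+b=6-\alpha,\ c+d=5,\ b+d\ge 2,
\end{equation*}
together with the symmetric ones obtained by interchanging the roles of the two variables; here $b,d\ge 0$ are integers (or, where $5-\alpha$ or $6-\alpha$ is non-integral, one uses the elementary inequality $|s+t|^p\le |s|^p+C(|s|^{p-1}|t|+|t|^p)$ for $p>1$ to reduce to such monomials up to harmless lower-order pieces).

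The key steps are then as follows. First, apply the Hardy--Littlewood--Sobolev inequality (Proposition \ref{HL}) with exponents $\theta=r=\frac{6}{6-\alpha}$ to each such double integral, which converts it into a product of two $L^{6/(6-\alpha)}(\Omega_\varepsilon)$-norms, of $V^{a}|\phi|^{b}$ and of $V^{c}|\phi|^{d}|\psi|$ respectively. Second, apply H\"older's inequality inside each factor, distributing the exponents so that every power of $\phi$ and of $\psi$ carries an $L^6$-norm and the remaining powers of $V$ carry an $L^6$-norm as well: since $a\cdot\frac{6-\alpha}{6}\cdot\frac{1}{(\text{share of }V)}$ works out (using $a+b=6-\alpha$ and $c+d=5$) to put $V$ in $L^6$, one gets a bound of the form $C\,\|V\|_{L^6}^{a+c}\,\|\phi\|_{L^6}^{b+d}\,\|\psi\|_{L^6}\le C\,\|\phi\|_{H_0^1(\Omega_\varepsilon)}^{b+d}\,\|\psi\|_{H_0^1(\Omega_\varepsilon)}$ by Sobolev embedding, where $\|V\|_{L^6(\Omega_\varepsilon)}$ is bounded uniformly under the standing hypotheses on $\mu'$ and $\mathrm{dist}(\xi',\partial\Omega_\varepsilon)$ (this uniform bound on $V$, and on $\pi$, is where the conditions $\mu'\in(\delta,\delta^{-1})$ and $\mathrm{dist}(\xi',\partial\Omega_\varepsilon)>\delta/\varepsilon$ enter). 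Third, since every surviving term has $b+d\ge 2$, and $\phi$ is assumed small (so $\|\phi\|_{H_0^1}^{b+d}\le \|\phi\|_{H_0^1}^{2}$ for the leading term, while higher powers are absorbed), summing the finitely many contributions and taking the supremum over $\|\psi\|_{H_0^1(\Omega_\varepsilon)}\le 1$ yields $\|N(\phi)\|_{H_0^1(\Omega_\varepsilon)}\le C\|\phi\|_{H_0^1(\Omega_\varepsilon)}^2$.

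The main obstacle is purely bookkeeping rather than conceptual: one must verify that in every one of the (finitely many) monomial terms produced by the double Taylor expansion of the nonlocal nonlinearity, the H\"older exponents can indeed be chosen so that each factor of $\phi$ or $\psi$ is paired with exactly an $L^6$-norm while the leftover powers of $V$ land in $L^6$ as well — this is exactly the HLS-critical scaling, and it works because $2_\alpha^*=\frac{6-\alpha}{2}$ and the two exponents $6-\alpha,\,5-\alpha$ add up correctly, but it has to be checked term by term, paying attention to the cases where $5-\alpha$ or $6-\alpha$ is not an integer. A secondary point requiring care is uniformity in $\varepsilon$: the Sobolev constant for $H_0^1(\Omega_\varepsilon)\hookrightarrow L^6(\Omega_\varepsilon)$ is scale-invariant in dimension three, so no loss occurs there, but one should note this explicitly so that the constant $C$ in the conclusion does not depend on $\varepsilon$.
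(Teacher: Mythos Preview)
Your approach is essentially identical to the paper's: both proceed by duality against a test function, expand $N(\phi)$ as the second-order Taylor remainder of the nonlocal nonlinearity, and bound each resulting double integral via the Hardy--Littlewood--Sobolev inequality with $\theta=r=\tfrac{6}{6-\alpha}$, then H\"older and the Sobolev embedding into $L^6$. One minor bookkeeping slip: the constraint on the outer factor should read $c+d=5-\alpha$ (not $c+d=5$), since the exponent outside the convolution is $5-\alpha$; with that correction your H\"older balance closes exactly as in the paper.
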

\begin{proof}
For any $\varphi\in H_0^1(\Omega_\varepsilon)$, by the definition of $N(\phi)$, we have
\begin{align*}
  \Big|\int_{\Omega_\varepsilon}N(\phi)\varphi dx\Big|\leq& C\bigg|\int_{\Omega_\varepsilon}\int_{\Omega_\varepsilon}\frac{V^{6-\alpha}(y)V^{3-\alpha}(x)
  \phi^2(x)\varphi(x)}{|x-y|^\alpha}dxdy\bigg|\\
  &+C\bigg|\int_{\Omega_\varepsilon}\int_{\Omega_\varepsilon}\frac{V^{5-\alpha}(y)\phi(y)V^{4-\alpha}(x)
  \phi(x)\varphi(x)}{|x-y|^\alpha}dxdy\bigg|.
\end{align*}
Using \eqref{HLS}, the H\"{o}lder and Sobolev inequalities, we obtain
\begin{equation*}
 \bigg| \int_{\Omega_\varepsilon}\int_{\Omega_\varepsilon}\frac{V^{6-\alpha}(y)V^{3-\alpha}(x)
  \phi^2(x)\varphi(x)}{|x-y|^\alpha}dxdy\bigg|\leq C\Big(\int_{\Omega_\varepsilon}w_{\mu',\xi'}^{\frac{6(3-\alpha)}{6-\alpha}}\phi^{\frac{12}{6-\alpha}}\varphi^{\frac{6}{6-\alpha}}dx\Big)^{\frac{6-\alpha}{6}}\leq C
  \|\phi\|^2_{H_0^1(\Omega_\varepsilon)}\|\varphi\|_{H_0^1(\Omega_\varepsilon)},
\end{equation*}
and
\begin{align*}
  \bigg|\int_{\Omega_\varepsilon}\int_{\Omega_\varepsilon}\frac{V^{5-\alpha}(y)\phi(y)V^{4-\alpha}(x)
  \phi(x)\varphi(x)}{|x-y|^\alpha}dxdy\bigg|\leq & C\Big(\int_{\Omega_\varepsilon}w_{\mu',\xi'}^{\frac{6(5-\alpha)}{6-\alpha}}\phi^{\frac{6}{6-\alpha}}dx\Big)^{\frac{6-\alpha}{6}}
  \Big(\int_{\Omega_\varepsilon}w_{\mu',\xi'}^{\frac{6(4-\alpha)}{6-\alpha}}\phi^{\frac{6}{6-\alpha}}\varphi^{\frac{6}{6-\alpha}}dx\Big)^{\frac{6-\alpha}{6}}
\\
\leq &C
  \|\phi\|^2_{H_0^1(\Omega_\varepsilon)}\|\varphi\|_{H_0^1(\Omega_\varepsilon)}.
\end{align*}
This completes the proof.
\end{proof}

\begin{lemma}\label{err}
Under the conditions of Lemma \ref{non},
there holds
\begin{equation*}
  \|E\|_{H_0^1(\Omega_\varepsilon)}\leq C\varepsilon.
\end{equation*}
\end{lemma}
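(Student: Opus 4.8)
The plan is to estimate the $H_0^1(\Omega_\varepsilon)$-norm of the error term $E$ by testing against an arbitrary $\varphi\in H_0^1(\Omega_\varepsilon)$ and bounding the resulting double integrals via the Hardy--Littlewood--Sobolev inequality \eqref{HLS} together with H\"older and Sobolev. First I would recall that $V=w_{\mu',\xi'}+\psi$ with $\psi(x):=\varepsilon^{1/2}\pi_{\mu,\xi}(\varepsilon x)$, and that by Lemma \ref{piex} (after the rescaling $x\mapsto \varepsilon x$) one has $\|\psi\|_{L^\infty(\Omega_\varepsilon)}=O(\varepsilon)$ and, more usefully, $\|\psi\|_{H^1_0(\Omega_\varepsilon)}=O(\varepsilon)$ as well, uniformly under the constraints $\mathrm{dist}(\xi',\partial\Omega_\varepsilon)>\delta/\varepsilon$ and $\mu'\in(\delta,\delta^{-1})$. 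Writing
\begin{equation*}
  E=\Big(\int_{\Omega_\varepsilon}\frac{V^{6-\alpha}(y)}{|x-y|^\alpha}dy\Big)V^{5-\alpha}
  -\Big(\int_{\Omega_\varepsilon}\frac{w_{\mu',\xi'}^{6-\alpha}(y)}{|x-y|^\alpha}dy\Big)w_{\mu',\xi'}^{5-\alpha},
\end{equation*}
I would expand both the outer factor $V^{5-\alpha}-w_{\mu',\xi'}^{5-\alpha}$ and the inner factor $V^{6-\alpha}-w_{\mu',\xi'}^{6-\alpha}$ using the elementary inequality $|(a+b)^p-a^p|\le C(a^{p-1}|b|+|b|^p)$ for $a,b\ge 0$ and $p>1$ (and its linearized refinement when $p\ge 2$). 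This produces a finite sum of terms, each of which is a double integral of the schematic form
\begin{equation*}
  \int_{\Omega_\varepsilon}\int_{\Omega_\varepsilon}\frac{w_{\mu',\xi'}^{a}(y)\,|\psi(y)|^{b}\,w_{\mu',\xi'}^{c}(x)\,|\psi(x)|^{d}\,|\varphi(x)|}{|x-y|^\alpha}\,dx\,dy,
\end{equation*}
with $b+d\ge 1$ and the total homogeneity matching that of the original nonlinearity.

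For each such term I would apply \eqref{HLS} with the conjugate pair $\theta=r=\tfrac{6}{6-\alpha}$ to split off the kernel, reducing matters to a product of two integrals of the form $\big(\int_{\Omega_\varepsilon} w_{\mu',\xi'}^{*}|\psi|^{*}(|\varphi|)^{*}\big)^{(6-\alpha)/6}$; then H\"older with exponents summing appropriately, followed by the Sobolev embedding $H^1_0(\Omega_\varepsilon)\hookrightarrow L^6(\Omega_\varepsilon)$ applied to $\psi$ and to $\varphi$, and the fact that $\int_{\Omega_\varepsilon}w_{\mu',\xi'}^{6}\le \int_{\mathbb{R}^3}w_{1,0}^6<\infty$ is bounded uniformly. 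Each term then carries at least one factor $\|\psi\|_{H^1_0(\Omega_\varepsilon)}=O(\varepsilon)$ and at most one factor $\|\varphi\|_{H^1_0(\Omega_\varepsilon)}$, the remaining factors being $O(1)$; collecting, $\big|\int_{\Omega_\varepsilon}E\varphi\big|\le C\varepsilon\|\varphi\|_{H^1_0(\Omega_\varepsilon)}$, and taking the supremum over $\|\varphi\|_{H^1_0(\Omega_\varepsilon)}\le 1$ gives $\|E\|_{H^1_0(\Omega_\varepsilon)}\le C\varepsilon$ as claimed. (The case $\alpha$ close to $3$, where some of the exponents $4-\alpha$ or $3-\alpha$ become small or the naive expansion of $(a+b)^{6-\alpha}$ needs care, is handled the same way since $6-\alpha,5-\alpha>1$ throughout and the HLS pair is always the critical one.)

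The main obstacle I anticipate is purely bookkeeping rather than conceptual: the Choquard structure means the error $E$ is genuinely \emph{bilinear} in $(w_{\mu',\xi'},\psi)$ through the convolution, so one must be careful that in \emph{every} cross term at least one $\psi$ survives — including the term where the perturbation sits only inside the nonlocal integral, $\big(\int \frac{(\text{$w$-power})\cdot\psi}{|x-y|^\alpha}\big)w_{\mu',\xi'}^{5-\alpha}$ — and that the H\"older exponents chosen after invoking \eqref{HLS} are admissible (all between $1$ and $6$) simultaneously for the $w$-power, the $\psi$-power, and $\varphi$. Once the exponent arithmetic is set up once, it propagates to all terms verbatim, mirroring the computation already carried out in the proof of Lemma \ref{non}; indeed the estimate here is strictly easier, since $\psi$ supplies a quantitatively small $O(\varepsilon)$ factor where Lemma \ref{non} only had the qualitatively small $\|\phi\|_{H^1_0}$.
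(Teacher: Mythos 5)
Your overall strategy — test $E$ against $\varphi\in H_0^1(\Omega_\varepsilon)$, expand $V=w_{\mu',\xi'}+\psi$ with $\psi(x)=\varepsilon^{1/2}\pi_{\mu,\xi}(\varepsilon x)$, and estimate the resulting double integrals with \eqref{HLS}, H\"older and Sobolev — is exactly the approach the paper takes, and the bilinear bookkeeping you describe (splitting the Choquard difference so that each cross term carries at least one $\psi$) is also what happens there. However, there is a genuine quantitative error that breaks the argument as you have laid it out. You claim, ``more usefully, $\|\psi\|_{H^1_0(\Omega_\varepsilon)}=O(\varepsilon)$,'' and you plan to use Sobolev embedding to put $\psi$ into $L^6(\Omega_\varepsilon)$. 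But the scaling $\psi(x)=\varepsilon^{1/2}\pi_{\mu,\xi}(\varepsilon x)$ is exactly the $\dot H^1$- and $L^6$-invariant scaling in dimension $3$, so
\[
\|\psi\|_{\dot H^1(\Omega_\varepsilon)}=\|\pi_{\mu,\xi}\|_{\dot H^1(\Omega)}
\quad\text{and}\quad
\|\psi\|_{L^6(\Omega_\varepsilon)}=\|\pi_{\mu,\xi}\|_{L^6(\Omega)},
\]
and by Lemma \ref{piex} together with $\mu'=\mu/\varepsilon\in(\delta,\delta^{-1})$ (so $\mu\sim\varepsilon$), both of these are only $O(\mu^{1/2})=O(\varepsilon^{1/2})$, not $O(\varepsilon)$. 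Running your HLS$+$H\"older$+$Sobolev scheme with $\psi$ placed in $L^6$ therefore yields $\big|\int E\varphi\big|\le C\varepsilon^{1/2}\|\varphi\|_{H_0^1}$, i.e.\ only $\|E\|_{H_0^1(\Omega_\varepsilon)}\le C\varepsilon^{1/2}$, which is too weak.

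The correct route — and the one the paper uses — is to exploit the $L^\infty$ bound you also quote but then set aside: $\|\psi\|_{L^\infty(\Omega_\varepsilon)}=\varepsilon^{1/2}\|\pi_{\mu,\xi}\|_{L^\infty(\Omega)}=O(\varepsilon^{1/2}\mu^{1/2})=O(\varepsilon)$. One pulls the factor $\psi$ out of the double integral in $L^\infty$, leaving an integral involving only powers of $w_{\mu',\xi'}$ and $\varphi$, which is then bounded by $C\|\varphi\|_{H_0^1(\Omega_\varepsilon)}$ via \eqref{HLS}, H\"older with the critical pair $\theta=r=\tfrac{6}{6-\alpha}$, and Sobolev on $\varphi$ alone; this gives the full $C\varepsilon\|\varphi\|_{H_0^1(\Omega_\varepsilon)}$. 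So your proposal has the right skeleton but selects the wrong norm on $\psi$ at the decisive step: you must use $\|\psi\|_{L^\infty(\Omega_\varepsilon)}=O(\varepsilon)$, not $\|\psi\|_{L^6(\Omega_\varepsilon)}$ via Sobolev, since the latter loses a factor $\varepsilon^{1/2}$.
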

\begin{proof}
For any $\varphi\in H_0^1(\Omega_\varepsilon)$, we have
\begin{align*}
  &\Big|\int_{\Omega_\varepsilon}E\varphi dx\Big|\\
  =&\displaystyle \bigg|\int_{\Omega_\varepsilon}\int_{\Omega_\varepsilon}\frac{[V^{6-\alpha}(y)-w^{6-\alpha}_{\mu',\xi'}(y)]V^{5-\alpha}(x)\varphi(x)}{|x-y|^\alpha}dxdy-
\int_{\Omega_\varepsilon}\int_{\Omega_\varepsilon}\frac{w_{\mu',\xi'}^{6-\alpha}(y)[V^{5-\alpha}(x)-w^{5-\alpha}_{\mu',\xi'}(x)]\varphi(x)}{|x-y|^\alpha}dxdy\bigg|\\
  \\ \leq &C\bigg|\displaystyle \int_{\Omega_\varepsilon}\int_{\Omega_\varepsilon}\frac{w^{5-\alpha}_{\mu',\xi'}(y)\varepsilon^{1/2}\pi_{\mu,\xi}(\varepsilon y)V^{5-\alpha}(x)\varphi(x)}{|x-y|^\alpha}dxdy\bigg|+C\bigg|\displaystyle \int_{\Omega_\varepsilon}\int_{\Omega_\varepsilon}\frac{w^{6-\alpha}_{\mu',\xi'}(y)w_{\mu',\xi'}^{4-\alpha}(x)\varepsilon^{1/2}\pi_{\mu,\xi}(\varepsilon x)\varphi(x)}{|x-y|^\alpha}dxdy\bigg|.
\end{align*}
By Lemma \ref{piex}, using \eqref{HLS}, the H\"{o}lder and Sobolev inequalities, we deduce that
\begin{align*}
  \displaystyle \bigg|\int_{\Omega_\varepsilon}\int_{\Omega_\varepsilon}\frac{w^{6-\alpha}_{\mu',\xi'}(y)w_{\mu',\xi'}^{4-\alpha}(x)\varepsilon^{1/2}\pi_{\mu,\xi}(\varepsilon x)\varphi(x)}{|x-y|^\alpha}dxdy\bigg|&\leq C\varepsilon 
  \Big(\int_{\Omega_\varepsilon}w_{\mu',\xi'}^{\frac{6(4-\alpha)}{6-\alpha}}\varphi^{\frac{6}{6-\alpha}}dx\Big)^{\frac{6-\alpha}{6}}\leq C\varepsilon\|\varphi\|_{H_0^1(\Omega_\varepsilon)}.
\end{align*}
Similarly, we can obtain
\begin{equation*}
  \displaystyle \bigg|\int_{\Omega_\varepsilon}\int_{\Omega_\varepsilon}\frac{w^{5-\alpha}_{\mu',\xi'}(y)\varepsilon^{1/2}\pi_{\mu,\xi}(\varepsilon y)V^{5-\alpha}(x)\varphi(x)}{|x-y|^\alpha}dxdy\bigg|\leq C\varepsilon\|\varphi\|_{H_0^1(\Omega_\varepsilon)}.
\end{equation*}
Hence the conclusion is reached.
\end{proof}

By Lemma \ref{ker},
we define
\begin{equation*}
  K_{\mu',\xi'}={\bf span}\Big\{\frac{\partial V}{\partial \xi'_1},\frac{\partial V}{\partial \xi'_2},\frac{\partial V}{\partial \xi'_3},\frac{\partial V}{\partial \mu'}\Big\},
\end{equation*}
and
\begin{equation*}
  K_{\mu',\xi'}^\perp=\Big\{\varphi\in H_0^1(\Omega_\varepsilon):\Big\langle\frac{\partial V}{\partial \mu'},\varphi\Big\rangle=0,\Big\langle\frac{\partial V}{\partial \xi'_i},\varphi\Big\rangle=0,\,\, \text{for $i=1,2,3$}\Big\},
\end{equation*}
where $\langle\cdot,\cdot\rangle$ denotes the inner product in the Sobolev space $H_0^1(\Omega_\varepsilon)$. Then we define the projections $\Pi_{\mu',\xi'}$ and $\Pi^\perp_{\mu',\xi'}$ of the Sobolev space $H_0^1(\Omega_\varepsilon)$ onto $K_{\mu',\xi'}$ and $K_{\mu',\xi'}^\perp$ respectively. We first
solve the following problem
\begin{equation}\label{zhuan}
  \Pi^\perp_{\mu',\xi'}L(\phi)=\Pi^\perp_{\mu',\xi'}(N(\phi)+E),
\end{equation}
and we have the following lemma.
\begin{proposition}\label{fixed}
Under the conditions of Lemma \ref{non},
equation \eqref{zhuan} admits a unique solution $\phi_{\mu',\xi'}$ in $K^\perp_{\mu',\xi'}$, which is continuously differentiable with respect to $\mu'$ and $\xi'$, such that
\begin{equation*}
  \|\phi_{\mu',\xi'}\|_{H_0^1(\Omega_\varepsilon)}\leq C\varepsilon.
\end{equation*}
\end{proposition}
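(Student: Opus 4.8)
The plan is to solve the fixed-point problem \eqref{zhuan} by the contraction mapping principle in the space $K^\perp_{\mu',\xi'}$. The first and central step is to establish an invertibility estimate for the linear operator $\Pi^\perp_{\mu',\xi'}L$ restricted to $K^\perp_{\mu',\xi'}$: there exists a constant $C>0$, independent of $\varepsilon$ (and uniform for $\mu',\xi'$ satisfying the constraints of Lemma \ref{non}), such that
\begin{equation*}
  \|\Pi^\perp_{\mu',\xi'}L(\phi)\|_{H_0^1(\Omega_\varepsilon)}\geq C\|\phi\|_{H_0^1(\Omega_\varepsilon)}\quad\text{for all }\phi\in K^\perp_{\mu',\xi'}.
\end{equation*}
The standard route here is a contradiction/blow-up argument: suppose there are sequences $\varepsilon_n\to 0$, admissible $\mu'_n,\xi'_n$, and $\phi_n\in K^\perp_{\mu'_n,\xi'_n}$ with $\|\phi_n\|_{H_0^1}=1$ but $\|\Pi^\perp L(\phi_n)\|_{H_0^1}\to 0$. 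One writes $L(\phi_n)=\Pi^\perp L(\phi_n)+\sum_j c_{n,j}Z_{n,j}$ where $Z_{n,j}$ are the normalized generators of $K_{\mu'_n,\xi'_n}$, tests against the $Z_{n,j}$ to show the coefficients $c_{n,j}\to 0$, and then translates/rescales so that $\phi_n$ converges weakly to a limit $\phi_\infty\in D^{1,2}(\mathbb{R}^3)$ solving the limiting linearized equation $\ell(\phi_\infty)=0$ (with $w_{\mu,\xi}$ replacing $V$, since the $\lambda\varepsilon^2\phi$ term and the $\pi_{\mu,\xi}$ corrections vanish in the limit — this uses Lemma \ref{piex} and the Hardy--Littlewood--Sobolev estimates exactly as in Lemmas \ref{non}, \ref{err}). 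By the non-degeneracy result Lemma \ref{ker}, $\phi_\infty$ lies in the span of the four kernel functions; but the orthogonality conditions defining $K^\perp$ pass to the limit and force $\phi_\infty=0$. A separate argument (using that the potential terms in $L$ are compact perturbations and are concentrated near $\xi'$, together with the boundary decay of $w_{\mu',\xi'}$) then shows $\|\phi_n\|_{H_0^1}\to 0$, contradicting $\|\phi_n\|_{H_0^1}=1$. This is the part I expect to be the main obstacle, since the nonlocal convolution terms must be controlled carefully in the rescaled limit; the paper's Remark indicates the authors give a proof different from \cite[Lemma 2.5]{YYZ} at precisely this point.

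Granting the linear estimate, $\Pi^\perp_{\mu',\xi'}L$ is an isomorphism of $K^\perp_{\mu',\xi'}$ onto itself with inverse bounded by $C$ uniformly in $\varepsilon$, so \eqref{zhuan} is equivalent to the fixed-point equation
\begin{equation*}
  \phi=T(\phi):=(\Pi^\perp_{\mu',\xi'}L)^{-1}\Pi^\perp_{\mu',\xi'}\big(N(\phi)+E\big),
\end{equation*}
the operator $T$ of \eqref{map}. I would then show $T$ maps the ball $B:=\{\phi\in K^\perp_{\mu',\xi'}:\|\phi\|_{H_0^1(\Omega_\varepsilon)}\leq \rho\varepsilon\}$ into itself for a suitable large constant $\rho$, and is a contraction there. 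For self-mapping: by Lemma \ref{err} $\|E\|_{H_0^1}\leq C\varepsilon$, and by Lemma \ref{non} $\|N(\phi)\|_{H_0^1}\leq C\|\phi\|_{H_0^1}^2\leq C\rho^2\varepsilon^2$, so $\|T(\phi)\|_{H_0^1}\leq C(C\varepsilon+C\rho^2\varepsilon^2)\leq \rho\varepsilon$ once $\rho$ is fixed large and $\varepsilon$ small. For the contraction property, one estimates $\|N(\phi_1)-N(\phi_2)\|_{H_0^1}\leq C(\|\phi_1\|_{H_0^1}+\|\phi_2\|_{H_0^1})\|\phi_1-\phi_2\|_{H_0^1}\leq C\rho\varepsilon\|\phi_1-\phi_2\|_{H_0^1}$ — this is a routine variant of the computation in Lemma \ref{non}, expanding the difference of the two Choquard-type nonlinearities and applying Hardy--Littlewood--Sobolev together with Hölder and Sobolev — hence $\|T(\phi_1)-T(\phi_2)\|_{H_0^1}\leq \tfrac12\|\phi_1-\phi_2\|_{H_0^1}$ for $\varepsilon$ small. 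The Banach fixed-point theorem yields a unique $\phi_{\mu',\xi'}\in B$, and it automatically satisfies $\|\phi_{\mu',\xi'}\|_{H_0^1(\Omega_\varepsilon)}\leq C\varepsilon$.

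Finally, the continuous (indeed $C^1$) dependence of $\phi_{\mu',\xi'}$ on the parameters $(\mu',\xi')$ follows from the implicit function theorem applied to the map $(\phi,\mu',\xi')\mapsto \phi-T_{\mu',\xi'}(\phi)$: this map is $C^1$ jointly in its arguments (the generators $\partial V/\partial\mu'$, $\partial V/\partial\xi'_i$ and the coefficients in $L$, $N$ depend smoothly on $\mu',\xi'$, with the relevant derivatives controlled via \eqref{gu2}, \eqref{gu1}, \eqref{gu3} and Lemma \ref{piex}), and its partial derivative in $\phi$ at the solution is $\mathrm{Id}-D_\phi T$, which is invertible because $\|D_\phi T(\phi_{\mu',\xi'})\|\leq \tfrac12<1$ from the contraction estimate. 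This gives the asserted differentiability and completes the proof of Proposition \ref{fixed}.
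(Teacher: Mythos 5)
Your proposal follows essentially the same route as the paper: the coercivity estimate for $\Pi^\perp_{\mu',\xi'}L$ on $K^\perp_{\mu',\xi'}$ is the content of the paper's Lemma \ref{xian}, proved exactly by the blow-up/nondegeneracy argument you outline (translate to $\xi'_n$, pass to the weak limit, identify the limiting linearized Choquard operator, invoke Lemma \ref{ker}, and rule out nontrivial limits using the orthogonality), after which the self-mapping, contraction, and implicit-function-theorem steps coincide. One small detail: your Lipschitz bound $\|N(\phi_1)-N(\phi_2)\|_{H_0^1}\leq C(\|\phi_1\|_{H_0^1}+\|\phi_2\|_{H_0^1})\|\phi_1-\phi_2\|_{H_0^1}$ is the cleaner (and, as stated, correct) form, whereas the paper writes the intermediate inequality $\|N(\phi_1)-N(\phi_2)\|_{H_0^1}\leq C\|\phi_1-\phi_2\|^2_{H_0^1}$ and then converts it into a contraction on $\mathcal{B}$ via $\|\phi_1-\phi_2\|\leq\|\phi_1\|+\|\phi_2\|\leq C\varepsilon$; both yield the same conclusion.
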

For the proof of Proposition \ref{fixed}, we need the following lemma.
\begin{lemma}\label{xian}
Under the conditions of Lemma \ref{non}, for any $\varepsilon>0$, there exists a constant $\varrho>0$ such that 
\begin{equation*}
  \|\Pi_{\mu',\xi'}^\perp L(\phi)\|_{H_0^1(\Omega_\varepsilon)}\geq \varrho \|\phi\|_{H_0^1(\Omega_\varepsilon)},\quad \forall \,\, \phi \in K_{\mu',\xi'}^\perp.
\end{equation*}
\end{lemma}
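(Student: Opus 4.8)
\emph{Proof strategy.} The plan is to argue by contradiction, combining the usual bubble-scale blow-up of finite-dimensional reduction with a second blow-up on the scale of $\Omega$, which is forced here because the term $\lambda\varepsilon^2\phi$ does not disappear (recall $\lambda_0>0$). Suppose the estimate fails; then there are $\varepsilon_n\to0$, parameters $\mu'_n\in(\delta,\delta^{-1})$ and $\xi'_n$ with $\mathrm{dist}(\xi'_n,\partial\Omega_{\varepsilon_n})>\delta/\varepsilon_n$, and $\phi_n\in K^\perp_{\mu'_n,\xi'_n}$ with $\|\phi_n\|_{H^1_0(\Omega_{\varepsilon_n})}=\|\nabla\phi_n\|_{L^2}=1$ and $g_n:=\Pi^\perp_{\mu'_n,\xi'_n}L(\phi_n)\to0$ in $H^1_0(\Omega_{\varepsilon_n})$ (the regime $\varepsilon_n\not\to0$ is treated by a standard Fredholm argument on a fixed domain). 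Write $L(\phi_n)=g_n+\sum_i c^n_i Z^n_i$ with $Z^n_i\in\{\partial_{\mu'}V,\partial_{\xi'_1}V,\partial_{\xi'_2}V,\partial_{\xi'_3}V\}$. First I would show $c^n_i\to0$: pairing with $Z^n_j$ and using $\langle g_n,Z^n_j\rangle=0$ (since $g_n\in K^\perp$, $Z^n_j\in K$) gives $\sum_i c^n_i\langle Z^n_i,Z^n_j\rangle=\langle L(\phi_n),Z^n_j\rangle$. Because the two nonlocal bilinear forms occurring in $L$ are symmetric, $L$ is self-adjoint on $H^1_0(\Omega_{\varepsilon_n})$, so $\langle L(\phi_n),Z^n_j\rangle=\langle L(Z^n_j),\phi_n\rangle$; differentiating the equation \eqref{propro''} for $V$ in the parameters, all terms of $L(Z^n_j)$ carrying no factor $\varepsilon^{1/2}\pi_{\mu,\xi}(\varepsilon\cdot)$ cancel (in particular the $\lambda\varepsilon^2$ contribution), and the remaining ones are $o(1)$ in $H^1_0(\Omega_{\varepsilon_n})$ by the Hardy--Littlewood--Sobolev estimates of Lemma \ref{err} together with \eqref{gu3}, so $\langle L(\phi_n),Z^n_j\rangle=o(1)$. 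Since the Gram matrix $(\langle Z^n_i,Z^n_j\rangle)$ is uniformly invertible (the $Z^n_i$ are $D^{1,2}$-close to $\partial_\mu w_{\mu'_n,0}$ and $\partial_{\xi_k}w_{\mu'_n,0}$, which are mutually orthogonal by parity with norms bounded above and below for $\mu'_n\in(\delta,\delta^{-1})$), we get $c^n_i\to0$, hence $L(\phi_n)=h_n$ with $\|h_n\|_{H^1_0(\Omega_{\varepsilon_n})}\to0$.

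Next I would carry out the bubble-scale limit. The translates $\tilde\phi_n(z):=\phi_n(z+\xi'_n)$, extended by zero, are bounded in $D^{1,2}(\mathbb{R}^3)$, so along a subsequence $\mu'_n\to\mu_\infty\in[\delta,\delta^{-1}]$, $\tilde\phi_n\weakto\phi_\infty$ in $D^{1,2}(\mathbb{R}^3)$ and strongly in $L^p_{loc}$, while $V(\cdot+\xi'_n)\to w_{\mu_\infty,0}$ in $L^6(\mathbb{R}^3)$ (the correction $\varepsilon^{1/2}\pi_{\mu,\xi}(\varepsilon\cdot)$ being $o(1)$ in $L^6$ by Lemma \ref{piex}) and $\Omega_{\varepsilon_n}-\xi'_n$ exhausts $\mathbb{R}^3$. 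Testing $L(\phi_n)=h_n$ against $\psi\in C_c^\infty(\mathbb{R}^3)$ and letting $n\to\infty$: the $\lambda\varepsilon_n^2$-term and $\langle h_n,\cdot\rangle$ vanish, and the nonlocal terms converge by the Hardy--Littlewood--Sobolev inequality, the $L^6$-decay of $w_{\mu_\infty,0}$ and Rellich's theorem, so $\phi_\infty$ solves $\ell(\phi_\infty)=0$ with $\ell$ the linearization at $w_{\mu_\infty,0}$. By Lemma \ref{ker}, $\phi_\infty\in{\bf span}\{\partial_{\xi_1}w_{\mu_\infty,0},\partial_{\xi_2}w_{\mu_\infty,0},\partial_{\xi_3}w_{\mu_\infty,0},\partial_\mu w_{\mu_\infty,0}\}$. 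On the other hand, the constraints $\langle\partial_{\mu'}V,\phi_n\rangle=\langle\partial_{\xi'_k}V,\phi_n\rangle=0$ pass to the limit (again using Lemma \ref{piex}, so that $\partial_{\mu'}V(\cdot+\xi'_n)$ and $\partial_{\xi'_k}V(\cdot+\xi'_n)$ converge strongly in $D^{1,2}$), forcing $\phi_\infty$ to be $D^{1,2}$-orthogonal to that very span; hence $\phi_\infty=0$.

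Knowing $\phi_\infty=0$, all nonlocal quadratic forms appearing in $\langle L(\phi_n),\phi_n\rangle$ tend to $0$: one splits each double integral into a near part ($|x|,|y|<R$ in translated variables), controlled by $\|\tilde\phi_n\|_{L^6(B_R)}\to0$, and a far part, controlled by a small power of the bubble tail $\int_{B_R^c}w_{\mu_\infty,0}^6$, applying the Hardy--Littlewood--Sobolev inequality on each piece (and \eqref{CS} where convenient). Testing $L(\phi_n)=h_n$ against $\phi_n$ and using $\|\nabla\phi_n\|_{L^2}^2=1$ and $\langle h_n,\phi_n\rangle\to0$ then gives $\lambda\varepsilon_n^2\|\phi_n\|_{L^2(\Omega_{\varepsilon_n})}^2\to1$, i.e.\ $\varepsilon_n^2\|\phi_n\|_{L^2(\Omega_{\varepsilon_n})}^2\to1/\lambda$. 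Finally I would perform the slow-scale blow-up: set $\bar\phi_n(x):=\varepsilon_n^{-1/2}\phi_n(\varepsilon_n^{-1}x)$ for $x\in\Omega$, so that $\|\nabla\bar\phi_n\|_{L^2(\Omega)}=1$ and $\|\bar\phi_n\|_{L^2(\Omega)}^2=\varepsilon_n^2\|\phi_n\|_{L^2(\Omega_{\varepsilon_n})}^2\to1/\lambda$. Along a subsequence $\bar\phi_n\weakto\bar\phi_\infty$ in $H^1_0(\Omega)$ and $\bar\phi_n\to\bar\phi_\infty$ in $L^2(\Omega)$ by Rellich; rescaling $L(\phi_n)=h_n$ and testing against a fixed $\bar\varphi\in C_c^\infty(\Omega)$ (the nonlocal and $h_n$ contributions being negligible after the change of variables, because $w_{\mu,\xi}$ concentrates and $|\bar\varphi(\varepsilon_n\cdot)|\le\|\bar\varphi\|_\infty$), one passes to the limit: $-\Delta\bar\phi_\infty=\lambda\bar\phi_\infty$ in $\Omega$, $\bar\phi_\infty\in H^1_0(\Omega)$. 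Since $\lambda<\lambda_1$ (the parameter being close to $\lambda_0\in(0,\lambda_1)$) is not a Dirichlet eigenvalue of $-\Delta$ on $\Omega$, $\bar\phi_\infty=0$, contradicting $\|\bar\phi_\infty\|_{L^2(\Omega)}^2=1/\lambda\neq0$.

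The main obstacle — and the feature distinguishing this from the regime $\lambda\to0$ and from the classical Brezis--Nirenberg reduction — is precisely the term $\lambda\varepsilon^2\phi$: it is invisible at the concentration scale, so the first blow-up yields only $\phi_\infty=0$ with no contradiction, and one must recognize that $\|\phi_n\|_{L^2}$ then has the critical order $\varepsilon_n^{-1}$ and run a second blow-up at the scale of $\Omega$, where this residual mass survives and collides with the non-resonance $\lambda\notin\sigma(-\Delta)$. The remaining technical difficulty is the bookkeeping for the nonlocal double integrals: one has to choose the Hardy--Littlewood--Sobolev exponents carefully and exploit the $L^6$-decay of the bubble both in estimating $L(Z^n_i)$ and in showing that the cross nonlocal terms vanish as $n\to\infty$.
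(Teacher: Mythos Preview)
Your argument is correct and follows the same contradiction/blow-up skeleton as the paper: normalize $\|\nabla\phi_n\|=1$, kill the projected part $c^n_iZ^n_i$, translate by $\xi'_n$, pass to a weak limit $\phi_\infty$ solving the linearized Hartree equation, invoke Lemma~\ref{ker} plus the orthogonality to get $\phi_\infty=0$, and then derive a contradiction from $\langle L(\phi_n),\phi_n\rangle$. Your use of the self-adjointness of $L$ to write $\langle L(\phi_n),Z^n_j\rangle=\langle \phi_n,L(Z^n_j)\rangle$ and then differentiate \eqref{propro''} is a clean variant of what the paper does by decomposing a generic test function as $\Pi^\perp\varphi+\sum c_{\varepsilon_n,j}\,\partial V$.

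The one substantive difference is the endgame. After the nonlocal quadratic terms are shown to vanish, the paper writes \eqref{contra} \emph{without} the mass term $\lambda_n\varepsilon_n^2\!\int\phi_n^2$ and concludes $\|\nabla\phi_n\|^2=o(1)$ directly; you instead keep that term, deduce $\lambda\varepsilon_n^2\|\phi_n\|_{L^2}^2\to1$, and run a second blow-up at the $\Omega$-scale to produce a nontrivial $H^1_0(\Omega)$-solution of $-\Delta\bar\phi_\infty=\lambda\bar\phi_\infty$, impossible since $\lambda<\lambda_1$. Your route is fully correct and in fact more careful than the paper's on this point, but it is heavier than necessary: once the nonlocal terms are $o(1)$, the Poincar\'e inequality on $\Omega_{\varepsilon_n}$ (first Dirichlet eigenvalue $\varepsilon_n^2\lambda_1$) already gives
\[
\lambda_n\varepsilon_n^2\|\phi_n\|_{L^2(\Omega_{\varepsilon_n})}^2\le \frac{\lambda_n}{\lambda_1}\,\|\nabla\phi_n\|_{L^2(\Omega_{\varepsilon_n})}^2,
\]
so $(1-\lambda_n/\lambda_1)\|\nabla\phi_n\|^2\le o(1)$ and the contradiction follows immediately from $\lambda_n<\lambda_1$. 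Your second blow-up is essentially the geometric unpacking of that Poincar\'e step; it buys extra robustness (it would still work if $\lambda$ were only assumed non-resonant rather than below $\lambda_1$) at the cost of an additional limiting argument.
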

\begin{proof}
We adopt the idea of \cite[Lemma 3.4]{YZ} to complete our proof. Assume by contradiction that there exist $\varepsilon_n\rightarrow0$ as $n\rightarrow\infty$, $\xi'_n\in \Omega_\varepsilon$ with $dist(\xi'_n,\partial \Omega_\varepsilon)>\frac{\delta}{\varepsilon}$, $\mu'_n,\lambda_n\in (\delta,\delta^{-1})$, and $\phi_n\in K_{\mu'_n,\xi'_n}^\perp$ such that
\begin{equation*}
  \|\Pi_{\mu'_n,\xi'_n}^\perp L(\phi_n)\|_{H_0^1(\Omega_\varepsilon)}\leq \frac{1}{n} \|\phi_n\|_{H_0^1(\Omega_\varepsilon)}.
\end{equation*}
We may assume that $\|\phi_n\|_{H_0^1(\Omega_\varepsilon)}=1$. Then for any $\varphi\in K_{\mu'_n,\xi'_n}^\perp$, we have
\begin{align}\label{1}
  \int_{\Omega_\varepsilon}\nabla \phi_n \cdot \nabla\varphi dx-\lambda_n \varepsilon_n^2\int_{\Omega_\varepsilon}\phi_n \varphi dx&-(6-\alpha)
  \displaystyle \int_{\Omega_\varepsilon}\int_{\Omega_\varepsilon}\frac{V_n^{5-\alpha}(y)\phi_n(y)V_n^{5-\alpha}(x)\varphi(x)}{|x-y|^\alpha}dxdy
 \nonumber \\&-(5-\alpha)
  \displaystyle \int_{\Omega_\varepsilon}\int_{\Omega_\varepsilon}\frac{V_n^{6-\alpha}(y)V_n^{4-\alpha}(x)\phi_n(x)\varphi(x)}{|x-y|^\alpha}dxdy \nonumber\\
  =&\langle L(\phi_n),\varphi\rangle=\big\langle \Pi_{\mu'_n,\xi'_n}^\perp L(\phi_n),\varphi\big\rangle\leq o(1)\|\varphi\|_{H_0^1(\Omega_\varepsilon)}.
\end{align}
Let $\varphi=\phi_n$, we find
\begin{align}\label{contra}
  \int_{\Omega_\varepsilon}|\nabla \phi_n|^2 dx&-(6-\alpha)
  \displaystyle \int_{\Omega_\varepsilon}\int_{\Omega_\varepsilon}\frac{V_n^{5-\alpha}(y)\phi_n(y)V_n^{5-\alpha}(x)\phi_n(x)}{|x-y|^\alpha}dxdy
  \nonumber\\&-(5-\alpha)
  \displaystyle \int_{\Omega_\varepsilon}\int_{\Omega_\varepsilon}\frac{V_n^{6-\alpha}(y)V_n^{4-\alpha}(x)\phi_n^2(x)}{|x-y|^\alpha}dxdy=o(1).
\end{align}
Next, we define $\tilde{\phi}_n(x)=\phi_n(x+\xi'_n)$. Then $\displaystyle\int_{\mathbb{R}^3}|\nabla \tilde{\phi}_n|^2dx\leq C$ and $\tilde{\phi}_n\in K_{\mu'_n,0}^\perp$. Up to a subsequence, we assume that $\tilde{\phi}_n\rightharpoonup \tilde{\phi}$ in $D^{1,2}(\mathbb{R}^3)$. From \eqref{1}, we expect that $\tilde{\phi}$ satisfies
\begin{equation}\label{2}
  \displaystyle -\Delta \tilde{\phi}-(6-\alpha)\Big(\int_{\mathbb{R}^3}\frac{w_{\mu',0}^{5-\alpha}(y)\tilde{\phi}(y)}{|x-y|^\alpha}dy\Big)w_{\mu',0}^{5-\alpha}-
 (5-\alpha)\Big(\int_{\mathbb{R}^3}\frac{w_{\mu',0}^{6-\alpha}(y)}{|x-y|^\alpha}dy\Big)w_{\mu',0}^{4-\alpha}\tilde{\phi}=0.
\end{equation}
The major difficulty to prove this claim is that \eqref{1} holds just for $\varphi\in K_{\mu'_n,\xi'_n}^\perp$, not for all $\varphi\in D^{1,2}(\mathbb{R}^3)$.

Now, we give the proof of \eqref{2}. For any $\varphi\in D^{1,2}(\mathbb{R}^3)$, there exist some constants $c_{\varepsilon_n,0}$ and $c_{\varepsilon_n,j}$ ($j=1,2,3$) such that
\begin{equation*}
 \varphi- \Pi_{\mu'_n,\xi'_n}^\perp \varphi=c_{\varepsilon_n,0} \frac{\partial V_n}{\partial \mu'_n}+\sum\limits_{j=1}^3 c_{\varepsilon_n,j} \frac{\partial V_n}{\partial \xi'_{n,j}}.
\end{equation*}
Since $ \big\langle\frac{\partial V_n}{\partial \mu'_n},\Pi_{\mu'_n,\xi'_n}^\perp \varphi\big\rangle=0$ and $\big\langle\frac{\partial V_n}{\partial \xi'_{n,i}},\Pi_{\mu'_n,\xi'_n}^\perp \varphi \big\rangle=0$ for $i=1,2,3$, we have
\begin{equation*}
  \Big\langle\frac{\partial V_n}{\partial \mu'_n},\varphi\Big\rangle=c_{\varepsilon_n,0}\Big\langle\frac{\partial V_n}{\partial \mu'_n},\frac{\partial V_n}{\partial \mu'_n}\Big\rangle\quad \text{and}\quad \Big\langle\frac{\partial V_n}{\partial \xi'_{n,i}},\varphi\Big\rangle=\delta_{ij}c_{\varepsilon_n,j}\Big\langle\frac{\partial V_n}{\partial \mu'_{n,i}},\frac{\partial V_n}{\partial \mu'_{n,j}}\Big\rangle.
\end{equation*}
Thus
\begin{equation*}
  c_{\varepsilon_n,0}=a_n\Big\langle\frac{\partial V_n}{\partial \mu'_n},\varphi\Big\rangle\quad \text{and}\quad c_{\varepsilon_n,j}=b_{n,j}\Big\langle\frac{\partial V_n}{\partial \xi'_{n,j}},\varphi\Big\rangle
\end{equation*}
for some constants $a_n$ and $b_{n,j}$, $j=1,2,3$. Hence, we obtain
\begin{align*}
  \int_{\Omega_\varepsilon}\nabla \phi_n \cdot \nabla\varphi dx-\lambda_n \varepsilon_n^2\int_{\Omega_\varepsilon}\phi_n \varphi dx&-(6-\alpha)
  \displaystyle \int_{\Omega_\varepsilon}\int_{\Omega_\varepsilon}\frac{V_n^{5-\alpha}(y)\phi_n(y)V_n^{5-\alpha}(x)\varphi(x)}{|x-y|^\alpha}dxdy
 \nonumber \\&-(5-\alpha)
  \displaystyle \int_{\Omega_\varepsilon}\int_{\Omega_\varepsilon}\frac{V_n^{6-\alpha}(y)V_n^{4-\alpha}(x)\phi_n(x)\varphi(x)}{|x-y|^\alpha}dxdy \nonumber\\
  =&\langle L(\phi_n),\varphi\rangle \nonumber\\
  =&\big\langle L(\phi_n),\Pi_{\mu'_n,\xi'_n}^\perp \varphi\big\rangle+c_{\varepsilon_n,0} \Big\langle L(\phi_n),\frac{\partial V_n}{\partial \mu'_n}\Big\rangle+\sum\limits_{j=1}^3 c_{\varepsilon_n,j} \Big\langle L(\phi_n),\frac{\partial V_n}{\partial \xi'_{n,j}}\Big\rangle.
\end{align*}
Observe that
\begin{equation*}
  \big|\big\langle L(\phi_n),\Pi_{\mu'_n,\xi'_n}^\perp \varphi\big\rangle\big|\leq o(1)\|\Pi_{\mu'_n,\xi'_n}^\perp \varphi\|_{H_0^1(\Omega_\varepsilon)}\leq
  o(1)\|\varphi\|_{H_0^1(\Omega_\varepsilon)},
\end{equation*}
and
\begin{equation*}
  \Big|\lambda_n \varepsilon_n^2\int_{\Omega_\varepsilon}\phi_n \varphi dx\Big|\leq \lambda_n \varepsilon_n^2\|\varphi\|_{H_0^1(\Omega_\varepsilon)}=o(1)\|\varphi\|_{H_0^1(\Omega_\varepsilon)},
\end{equation*}
we obtain
\begin{align}\label{3}
  \int_{\Omega_\varepsilon}\nabla \phi_n \cdot \nabla\varphi dx-&(6-\alpha)
  \displaystyle \int_{\Omega_\varepsilon}\int_{\Omega_\varepsilon}\frac{V_n^{5-\alpha}(y)\phi_n(y)V_n^{5-\alpha}(x)\varphi(x)}{|x-y|^\alpha}dxdy
 \nonumber \\&-(5-\alpha)
  \displaystyle \int_{\Omega_\varepsilon}\int_{\Omega_\varepsilon}\frac{V_n^{6-\alpha}(y)V_n^{4-\alpha}(x)\phi_n(x)\varphi(x)}{|x-y|^\alpha}dxdy \nonumber\\
  =&o(1)\|\varphi\|_{H_0^1(\Omega_\varepsilon)}+\tilde{a}_{n} \Big\langle \frac{\partial V_n}{\partial \mu'_n},\varphi\Big\rangle+\sum\limits_{j=1}^3 \tilde{b}_{n,j} \Big\langle \frac{\partial V_n}{\partial \xi'_{n,j}},\varphi\Big\rangle
\end{align}
for some constants $\tilde{a}_n$ and $\tilde{b}_{n,j}$, $j=1,2,3$. In the following, we prove that
\begin{equation}\label{4}
  \Big|\tilde{a}_{n} \Big\langle \frac{\partial V_n}{\partial \mu'_n},\varphi\Big\rangle\Big|=o(1)\|\varphi\|_{H_0^1(\Omega_\varepsilon)}\quad \text{and}\quad\Big|\tilde{b}_{n,j}\Big \langle \frac{\partial V_n}{\partial \xi'_{n,j}},\varphi\Big\rangle\Big|=o(1)\|\varphi\|_{H_0^1(\Omega_\varepsilon)},\quad \text{for $j=1,2,3$}.
\end{equation}

Taking $\varphi=\frac{\partial V_n}{\partial \xi'_{n,k}}$ in \eqref{3}, $1\leq k \leq 3$, since $\phi_n\in K_{\mu'_n,\xi'_n}^\perp$, we obtain
\begin{align}\label{com1}
  &\sum\limits_{j=1}^3 \tilde{b}_{n,j} \Big\langle \frac{\partial V_n}{\partial \xi'_{n,j}},\frac{\partial V_n}{\partial \xi'_{n,k}}\Big\rangle \nonumber\\
  =&-(6-\alpha)
  \displaystyle \int_{\Omega_\varepsilon}\int_{\Omega_\varepsilon}\frac{V_n^{5-\alpha}(y)\phi_n(y)V_n^{5-\alpha}(x)\frac{\partial V_n}{\partial \xi'_{n,k}}(x)}{|x-y|^\alpha}dxdy
\nonumber \\&-(5-\alpha)
  \displaystyle \int_{\Omega_\varepsilon}\int_{\Omega_\varepsilon}\frac{V_n^{6-\alpha}(y)V_n^{4-\alpha}(x)\phi_n(x)\frac{\partial V_n}{\partial \xi'_{n,k}}(x)}{|x-y|^\alpha}dxdy+ o(1)\Big\|\frac{\partial V_n}{\partial \xi'_{n,k}}\Big\|_{H_0^1(\Omega_\varepsilon)} \nonumber\\
  =&:-(6-\alpha)A_1-(5-\alpha)A_2+ o(1)\Big\|\frac{\partial V_n}{\partial \xi'_{n,k}}\Big\|_{H_0^1(\Omega_\varepsilon)}.
\end{align}
From \eqref{propro''}, it follows
\begin{align}\label{5}
  -\Delta \frac{\partial V_n}{\partial {\xi'_{n,k}}}=&\displaystyle(6-\alpha)\Big(\int_{\Omega_\varepsilon}\frac{w_{\mu'_n,\xi'_n}^{5-\alpha}(y)\frac{\partial w_{\mu'_n,\xi'_n}}{\partial \xi'_{n,k} }(y)}{|x-y|^\alpha}dy\Big)w_{\mu'_n,\xi'_n}^{5-\alpha} \nonumber\\
  &+(5-\alpha)\Big(\int_{\Omega_\varepsilon}\frac{w_{\mu'_n,\xi'_n}^{6-\alpha}(y)}{|x-y|^\alpha}dy\Big)w_{\mu'_n,\xi'_n}^{4-\alpha}\frac{\partial w_{\mu'_n,\xi'_n}}{\partial \xi'_{n,k} }+\lambda_n\varepsilon_n^2 \frac{\partial V_n}{\partial {\xi'_{n,k}}}.
  \end{align}
Using \eqref{gu1}, we get
\begin{align*}
  \int_{\Omega_\varepsilon}\Big|\nabla \frac{\partial V_n}{\partial {\xi'_{n,k}}}\Big|^2dx=&\displaystyle(6-\alpha)\int_{\Omega_\varepsilon}\int_{\Omega_\varepsilon}\frac{w_{\mu'_n,\xi'_n}^{5-\alpha}(y)\frac{\partial w_{\mu'_n,\xi'_n}}{\partial \xi'_{n,k} }(y)w_{\mu'_n,\xi'_n}^{5-\alpha}(x)\frac{\partial V_n}{\partial {\xi'_{n,k}}}(x)}{|x-y|^\alpha}dy\\
  &+(5-\alpha)\int_{\Omega_\varepsilon}\int_{\Omega_\varepsilon}\frac{w_{\mu'_n,\xi'_n}^{6-\alpha}(y)w_{\mu'_n,\xi'_n}^{4-\alpha}(x)\frac{\partial w_{\mu'_n,\xi'_n}}{\partial \xi'_{n,k} }(x)\frac{\partial V_n}{\partial {\xi'_{n,k}}}(x)}{|x-y|^\alpha}dy+\lambda_n\varepsilon_n^2\int_{\Omega_\varepsilon}
   \Big|\frac{\partial V_n}{\partial {\xi'_{n,k}}}\Big|^2dx\\
   =:&(6-\alpha)A_3+(5-\alpha)A_4+O(\varepsilon_n).
  \end{align*}
By a direct computation, we obtain $A_3=O(1)$ and $A_4=O(1)$.
Repeating the above estimate, we can also find
\begin{equation}\label{com2}
 \Big \langle \frac{\partial V_n}{\partial \xi'_{n,j}},\frac{\partial V_n}{\partial \xi'_{n,k}}\Big\rangle=O(1).
\end{equation}
On the other hand, by $\phi_n\in K_{\mu'_n,\xi'_n}^\perp$, \eqref{gu1} and \eqref{5}, we have
\begin{align*}
  (6-\alpha)A_1+(5-\alpha)A_2=&(6-\alpha)
  \displaystyle \int_{\Omega_\varepsilon}\int_{\Omega_\varepsilon}\frac{V_n^{5-\alpha}(y)\phi_n(y)V_n^{5-\alpha}(x)\frac{\partial V_n}{\partial \xi'_{n,k}}(x)}{|x-y|^\alpha}dxdy
 \\
 &+(5-\alpha)
  \displaystyle \int_{\Omega_\varepsilon}\int_{\Omega_\varepsilon}\frac{V_n^{6-\alpha}(y)V_n^{4-\alpha}(x)\phi_n(x)\frac{\partial V_n}{\partial \xi'_{n,k}}(x)}{|x-y|^\alpha}dxdy\\
  =&(6-\alpha)
  \displaystyle \int_{\Omega_\varepsilon}\int_{\Omega_\varepsilon}\frac{V_n^{5-\alpha}(y)\phi_n(y)V_n^{5-\alpha}(x)\frac{\partial w_{\mu'_n,\xi'_n}}{\partial \xi'_{n,k}}(x)}{|x-y|^\alpha}dxdy
 \\
 &+(5-\alpha)
  \displaystyle \int_{\Omega_\varepsilon}\int_{\Omega_\varepsilon}\frac{V_n^{6-\alpha}(y)V_n^{4-\alpha}(x)\phi_n(x)\frac{\partial w_{\mu'_n,\xi'_n}}{\partial \xi'_{n,k}}(x)}{|x-y|^\alpha}dxdy\\
  &+(6-\alpha)
  \displaystyle \int_{\Omega_\varepsilon}\int_{\Omega_\varepsilon}\frac{V_n^{5-\alpha}(y)\phi_n(y)V_n^{5-\alpha}(x)\frac{\partial [\varepsilon_n^{1/2}\pi_{\mu_n,\xi_n}(\varepsilon_n x)]}{\partial \xi'_{n,k}}(x)}{|x-y|^\alpha}dxdy
 \\
 &+(5-\alpha)
  \displaystyle \int_{\Omega_\varepsilon}\int_{\Omega_\varepsilon}\frac{V_n^{6-\alpha}(y)V_n^{4-\alpha}(x)\phi_n(x)\frac{\partial [\varepsilon_n^{1/2}\pi_{\mu_n,\xi_n}(\varepsilon_n x)]}{\partial \xi'_{n,k}}(x)}{|x-y|^\alpha}dxdy\\
  =&P+(6-\alpha)
  \displaystyle \int_{\Omega_\varepsilon}\int_{\Omega_\varepsilon}\frac{V_n^{5-\alpha}(y)\phi_n(y)V_n^{5-\alpha}(x)\frac{\partial [\varepsilon_n^{1/2}\pi_{\mu_n,\xi_n}(\varepsilon_n x)]}{\partial \xi'_{n,k}}(x)}{|x-y|^\alpha}dxdy
 \\
 &+(5-\alpha)
  \displaystyle \int_{\Omega_\varepsilon}\int_{\Omega_\varepsilon}\frac{V_n^{6-\alpha}(y)V_n^{4-\alpha}(x)\phi_n(x)\frac{\partial [\varepsilon_n^{1/2}\pi_{\mu_n,\xi_n}(\varepsilon_n x)]}{\partial \xi'_{n,k}}(x)}{|x-y|^\alpha}dxdy\\
  =:&P+(6-\alpha)A_5+(5-\alpha)A_6,
\end{align*}
where
\begin{align*}
  |P|=&\bigg|-\lambda_n\varepsilon_n^2\int_{\Omega_\varepsilon} \frac{\partial V_n}{\partial {\xi'_{n,k}}}\phi_ndx+(6-\alpha)
  \displaystyle \int_{\Omega_\varepsilon}\int_{\Omega_\varepsilon}\frac{V_n^{5-\alpha}(y)\phi_n(y)V_n^{5-\alpha}(x)\frac{\partial w_{\mu'_n,\xi'_n}}{\partial \xi'_{n,k}}(x)}{|x-y|^\alpha}dxdy
 \\
 &+(5-\alpha)
  \displaystyle \int_{\Omega_\varepsilon}\int_{\Omega_\varepsilon}\frac{V_n^{6-\alpha}(y)V_n^{4-\alpha}(x)\phi_n(x)\frac{\partial w_{\mu'_n,\xi'_n}}{\partial \xi'_{n,k}}(x)}{|x-y|^\alpha}dxdy\\
  &-(6-\alpha)
  \displaystyle \int_{\Omega_\varepsilon}\int_{\Omega_\varepsilon}\frac{w_{\mu'_n,\xi'_n}^{5-\alpha}(y)\phi_n(y)w_{\mu'_n,\xi'_n}^{5-\alpha}(x)\frac{\partial w_{\mu'_n,\xi'_n}}{\partial \xi'_{n,k}}(x)}{|x-y|^\alpha}dxdy
 \\
 &-(5-\alpha)
  \displaystyle \int_{\Omega_\varepsilon}\int_{\Omega_\varepsilon}\frac{w_{\mu'_n,\xi'_n}^{6-\alpha}(y)w_{\mu'_n,\xi'_n}^{4-\alpha}(x)\phi_n(x)\frac{\partial w_{\mu'_n,\xi'_n}}{\partial \xi'_{n,k}}(x)}{|x-y|^\alpha}dxdy\bigg|\\
  \leq &C\lambda_n\varepsilon_n^2\Big \|\frac{\partial V_n}{\partial {\xi'_{n,k}}}\Big\|_{H_0^1(\Omega_\varepsilon)} + C\bigg|\int_{\Omega_\varepsilon}\int_{\Omega_\varepsilon}\frac{w_{\mu'_n,\xi'_n}^{4-\alpha}(y)\varepsilon_n^{1/2}\pi_{\mu_n,\xi_n}(\varepsilon_n y)\phi_n(y)w_{\mu'_n,\xi'_n}^{5-\alpha}(x)\frac{\partial w_{\mu'_n,\xi'_n}}{\partial \xi'_{n,k}}(x)}{|x-y|^\alpha}dxdy\bigg|\\
  &+C\bigg| \int_{\Omega_\varepsilon}\int_{\Omega_\varepsilon}\frac{w_{\mu'_n,\xi'_n}^{5-\alpha}(y)\varepsilon_n^{1/2}\pi_{\mu_n,\xi_n}(\varepsilon_n y)w_{\mu'_n,\xi'_n}^{4-\alpha}(x)\phi_n(x)\frac{\partial w_{\mu'_n,\xi'_n}}{\partial \xi'_{n,k}}(x)}{|x-y|^\alpha}dxdy\bigg|\leq C\varepsilon_n.
\end{align*}
Moreover, a direct computation shows that $A_5=O(\varepsilon_n^2)$ and $A_6=O(\varepsilon_n^2)$. Hence, we have
\begin{equation*}
  (6-\alpha)A_1+(5-\alpha)A_2=O(\varepsilon_n).
\end{equation*}
This with \eqref{com1} and \eqref{com2} yields that $|\tilde{b}_{n,j}|=o(1)$ for $j=1,2,3$.
Therefore, we can deduce
\begin{equation*}
 \Big| \tilde{b}_{n,j} \Big\langle \frac{\partial V_n}{\partial \xi'_{n,j}},\varphi\Big\rangle \Big|\leq |\tilde{b}_{n,j}| \times \Big\|\frac{\partial V_n}{\partial \xi'_{n,j}}\Big\|_{H_0^1(\Omega_\varepsilon)}\|\varphi\|_{H_0^1(\Omega_\varepsilon)}= o(1)\|\varphi\|_{H_0^1(\Omega_\varepsilon)}.
\end{equation*}
Similarly, taking $\varphi=\frac{\partial V_n}{\partial \mu'_{n}}$ in \eqref{3}, we can prove that $|\tilde{a}_{n}|=o(1)$. Then we obtain
\begin{equation*}
 \Big| \tilde{a}_{n} \Big\langle \frac{\partial V_n}{\partial \mu'_{n}},\varphi\Big\rangle \Big|\leq |\tilde{a}_{n}| \times\Big\|\frac{\partial V_n}{\partial \mu'_{n}}\Big\|_{H_0^1(\Omega_\varepsilon)}\|\varphi\|_{H_0^1(\Omega_\varepsilon)}= o(1)\|\varphi\|_{H_0^1(\Omega_\varepsilon)}.
\end{equation*}
This completes the proof of \eqref{4}. Consequently, \eqref{3} becomes
\begin{align}\label{become}
  &\int_{\Omega_\varepsilon}\nabla \phi_n \cdot \nabla\varphi dx-(6-\alpha)
  \displaystyle \int_{\Omega_\varepsilon}\int_{\Omega_\varepsilon}\frac{V_n^{5-\alpha}(y)\phi_n(y)V_n^{5-\alpha}(x)\varphi(x)}{|x-y|^\alpha}dxdy
 \nonumber \\&-(5-\alpha)
  \displaystyle \int_{\Omega_\varepsilon}\int_{\Omega_\varepsilon}\frac{V_n^{6-\alpha}(y)V_n^{4-\alpha}(x)\phi_n(x)\varphi(x)}{|x-y|^\alpha}dxdy
  =o(1)\|\varphi\|_{H_0^1(\Omega_\varepsilon)}.
\end{align}

Next, for any $\varphi \in D^{1,2}(\mathbb{R}^3)$, let $\tilde{\varphi}_n(x)=\varphi(x-\xi'_n)$. Then from \eqref{become}, we have
\begin{align*}
  &\int_{\Omega_\varepsilon+\xi'_n}\nabla\tilde{ \phi}_n \cdot \nabla\varphi dx-(6-\alpha)
  \displaystyle \int_{\Omega_\varepsilon+\xi'_n}\int_{\Omega_\varepsilon+\xi'_n}\frac{w_{\mu'_n,0}^{5-\alpha}(y)\tilde{\phi}_n(y)w_{\mu'_n,0}^{5-\alpha}(x){\varphi}(x)}
  {|x-y|^\alpha}dxdy
 \nonumber \\&-(5-\alpha)
  \displaystyle \int_{\Omega_\varepsilon+\xi'_n}\int_{\Omega_\varepsilon+\xi'_n}\frac{w_{\mu'_n,0}^{6-\alpha}(y)w_{\mu'_n,0}^{4-\alpha}(x)\tilde{\phi}_n(x)\varphi(x)}{|x-y|^\alpha}dxdy
  \\
  =&\int_{\Omega_\varepsilon}\nabla{ \phi}_n \cdot \nabla\tilde{\varphi}_n dx-(6-\alpha)
  \displaystyle \int_{\Omega_\varepsilon}\int_{\Omega_\varepsilon}\frac{w_{\mu'_n,\xi'_n}^{5-\alpha}(y){\phi}_n(y)w_{\mu'_n,\xi'_n}^{5-\alpha}(x)\tilde{{\varphi}}_n(x)}
  {|x-y|^\alpha}dxdy
 \nonumber \\&-(5-\alpha)
  \displaystyle \int_{\Omega_\varepsilon}\int_{\Omega_\varepsilon}\frac{w_{\mu'_n,\xi'_n}^{6-\alpha}(y)w_{\mu'_n,\xi'_n}^{4-\alpha}(x){\phi}_n(x)\tilde{\varphi}_n(x)}{|x-y|^\alpha}dxdy
  =o(1)\|\tilde{\varphi}_n\|_{H_0^1(\Omega_\varepsilon)}=o(1)\|\varphi\|_{D^{1,2}(\mathbb{R}^3)}.
\end{align*}
Taking the limit as $n\rightarrow+\infty$, then $\tilde{\phi}$ satisfies
\begin{equation*}
  \displaystyle -\Delta \tilde{\phi}-(6-\alpha)\Big(\int_{\mathbb{R}^3}\frac{w_{\mu',0}^{5-\alpha}(y)\tilde{\phi}(y)}{|x-y|^\alpha}dy\Big)w_{\mu',0}^{5-\alpha}-
 (5-\alpha)\Big(\int_{\mathbb{R}^3}\frac{w_{\mu',0}^{6-\alpha}(y)}{|x-y|^\alpha}dy\Big)w_{\mu',0}^{4-\alpha}\tilde{\phi}=0.
\end{equation*}
This proves \eqref{2}.
From the non-degeneracy of solution $w_{\mu',0}$ and $\tilde{\phi}_n\in K_{\mu'_n,0}^\perp$, we obtain $\tilde{\phi}=0$.
Using \eqref{HLS}, the H\"{o}lder and Sobolev inequalities, we obtain
\begin{align*}
  &(6-\alpha)
  \displaystyle \int_{\Omega_\varepsilon}\int_{\Omega_\varepsilon}\frac{V_n^{5-\alpha}(y)\phi_n(y)V_n^{5-\alpha}(x)\phi_n(x)}{|x-y|^\alpha}dxdy
  +(5-\alpha)
  \displaystyle \int_{\Omega_\varepsilon}\int_{\Omega_\varepsilon}\frac{V_n^{6-\alpha}(y)V_n^{4-\alpha}(x)\phi_n^2(x)}{|x-y|^\alpha}dxdy\\
  \leq &C \Big(\int_{\Omega_\varepsilon}w_{\mu'_n,\xi'_n}^{\frac{6(5-\alpha)}{6-\alpha}}\phi_n^{\frac{6}{6-\alpha}}dx\Big)^{\frac{6-\alpha}{3}}+
   C \Big(\int_{\Omega_\varepsilon}w_{\mu'_n,\xi'_n}^{\frac{6(4-\alpha)}{6-\alpha}}\phi_n^{\frac{12}{6-\alpha}}dx\Big)^{\frac{6-\alpha}{6}}\\
   \leq &C\|\phi_n\|_{L^6(\Omega_\varepsilon)}^2=o(1).
\end{align*}
Then it follows from \eqref{contra} that
\begin{equation*}
  \int_{\Omega_\varepsilon}|\nabla \phi_n|^2 dx=o(1),
\end{equation*}
which is a contradiction. Thus we finish the proof of Lemma \ref{xian}.
\end{proof}
\noindent{\bf Proof of Proposition \ref{fixed}.}
By Lemma \ref{xian}, we can rewrite \eqref{zhuan} as
\begin{equation}\label{map}
  \phi=T(\phi):=(\Pi^\perp_{\mu',\xi'}L)^{-1}\big(\Pi^\perp_{\mu',\xi'}(N(\phi)+E)\big).
\end{equation}
We define a ball
\begin{equation*}
  \mathcal{B}:=\Big\{\phi\in K^\perp_{\mu',\xi'}:\|\phi\|_{H_0^1(\Omega_\varepsilon)}\leq C\varepsilon\Big\}.
\end{equation*}
In the following, we prove that $T$ maps $\mathcal{B}$ to $\mathcal{B}$ and $T$ is a contraction map. Hence, $T$ admits a fixed point $\phi_{\mu',\xi'}\in \mathcal{B}$.

First, by Lemmas \ref{non}-\ref{xian}, for any $\phi\in \mathcal{B}$, we have
\begin{equation*}
  \|T(\phi)\|_{H_0^1(\Omega_\varepsilon)}\leq C\|N(\phi)+E\|_{H_0^1(\Omega_\varepsilon)}\leq C\|N(\phi)\|_{H_0^1(\Omega_\varepsilon)}+C\|E\|_{H_0^1(\Omega_\varepsilon)}\leq C\varepsilon.
\end{equation*}

Second, for any $\phi_1,\phi_2\in \mathcal{B}$, we have
\begin{equation*}
  \|T(\phi_1)-T(\phi_2)\|_{H_0^1(\Omega_\varepsilon)}\leq C\|N(\phi_1)-N(\phi_2)\|_{H_0^1(\Omega_\varepsilon)}.
\end{equation*}
On the other hand, we know
\begin{align*}
  N(\phi_1)-N(\phi_2)=&\Big(\int_{\Omega_\varepsilon}\frac{(V+\phi_1)^{6-\alpha}(y)}{|x-y|^\alpha}dy\Big)(V+\phi_1)^{5-\alpha}
  -\Big(\int_{\Omega_\varepsilon}\frac{(V+\phi_2)^{6-\alpha}(y)}{|x-y|^\alpha}dy\Big)(V+\phi_2)^{5-\alpha}\\
  &-(6-\alpha)\displaystyle\Big(\int_{\Omega_\varepsilon}\frac{V^{5-\alpha}(y)(\phi_1-\phi_2)}{|x-y|^\alpha}dy\Big)V^{5-\alpha}-
  (5-\alpha)\displaystyle\Big(\int_{\Omega_\varepsilon}\frac{V^{6-\alpha}(y)}{|x-y|^\alpha}dy\Big)V^{4-\alpha}(\phi_1-\phi_2)\\
  =&\Big(\int_{\Omega_\varepsilon}\frac{(V+\phi_1)^{6-\alpha}(y)}{|x-y|^\alpha}dy\Big)\big[(V+\phi_1)^{5-\alpha}-(V+\phi_2)^{5-\alpha}\big]\\
  &-(5-\alpha)\Big(\int_{\Omega_\varepsilon}\frac{(V+\phi_1)^{6-\alpha}(y)}{|x-y|^\alpha}dy\Big)\big[V+\phi_1+\vartheta(\phi_1-\phi_2)\big]^{4-\alpha}(\phi_1-\phi_2)
  \\&+\Big(\int_{\Omega_\varepsilon}\frac{\big[(V+\phi_1)^{6-\alpha}-(V+\phi_2)^{6-\alpha}\big]}{|x-y|^\alpha}dy\Big)(V+\phi_2)^{5-\alpha}\\
  &-(6-\alpha)\Big(\int_{\Omega_\varepsilon}\frac{\big[V+\phi_1+\vartheta(\phi_1-\phi_2)\big]^{5-\alpha}(\phi_1-\phi_2)}{|x-y|^\alpha}dy\Big)(V+\phi_2)^{5-\alpha}\\
  &+(5-\alpha)\Big(\int_{\Omega_\varepsilon}\frac{(V+\phi_1)^{6-\alpha}(y)}{|x-y|^\alpha}dy\Big)\big[V+\phi_1+\vartheta(\phi_1-\phi_2)\big]^{4-\alpha}(\phi_1-\phi_2)\\
  &-(5-\alpha)\displaystyle\Big(\int_{\Omega_\varepsilon}\frac{V^{6-\alpha}(y)}{|x-y|^\alpha}dy\Big)V^{4-\alpha}(\phi_1-\phi_2)\\
  &+(6-\alpha)\Big(\int_{\Omega_\varepsilon}\frac{\big[V+\phi_1+\vartheta(\phi_1-\phi_2)\big]^{5-\alpha}(\phi_1-\phi_2)}{|x-y|^\alpha}dy\Big)(V+\phi_2)^{5-\alpha}\\
  &-(6-\alpha)\displaystyle\Big(\int_{\Omega_\varepsilon}\frac{V^{5-\alpha}(y)(\phi_1-\phi_2)}{|x-y|^\alpha}dy\Big)V^{5-\alpha}.
\end{align*}
For any $\varphi\in H_0^1(\Omega_\varepsilon)$, by the mean value theorem, using \eqref{HLS}, the H\"{o}lder and Sobolev inequalities, we have
\begin{align*}
  &\Big|\int_{\Omega_\varepsilon}\big[N(\phi_1)-N(\phi_2)\big]\varphi dx\Big|\\
  \leq &
  \bigg|\int_{\Omega_\varepsilon}\int_{\Omega_\varepsilon}\frac{(V+\phi_1)^{6-\alpha}(y)\big[(V+\phi_1)^{5-\alpha}-(V+\phi_2)^{5-\alpha}\big](x)\varphi(x)}{|x-y|^\alpha}dxdy\\
  &-(5-\alpha)\int_{\Omega_\varepsilon}\int_{\Omega_\varepsilon}\frac{(V+\phi_1)^{6-\alpha}(y)\big[V+\phi_1+\vartheta(\phi_1-\phi_2)\big]^{4-\alpha}(x)(\phi_1-\phi_2)(x)\varphi(x)}{|x-y|^\alpha}dxdy
  \\&+\int_{\Omega_\varepsilon}\int_{\Omega_\varepsilon}\frac{\big[(V+\phi_1)^{6-\alpha}-(V+\phi_2)^{6-\alpha}\big](y)(V+\phi_2)^{5-\alpha}(x)\varphi(x)}{|x-y|^\alpha}dxdy\\
  &-(6-\alpha)\int_{\Omega_\varepsilon}\int_{\Omega_\varepsilon}\frac{\big[V+\phi_1+\vartheta(\phi_1-\phi_2)\big]^{5-\alpha}(y)(\phi_1-\phi_2)(y)(V+\phi_2)^{5-\alpha}(x)\varphi(x)}{|x-y|^\alpha}dxdy\\
  &+(5-\alpha)\int_{\Omega_\varepsilon}\int_{\Omega_\varepsilon}\frac{(V+\phi_1)^{6-\alpha}(y)\big[V+\phi_1+\vartheta(\phi_1-\phi_2)\big]^{4-\alpha}(x)(\phi_1-\phi_2)(x)\varphi(x)}{|x-y|^\alpha}dxdy\\
  &-(5-\alpha)\int_{\Omega_\varepsilon}\int_{\Omega_\varepsilon}\frac{V^{6-\alpha}(y)V^{4-\alpha}(x)(\phi_1-\phi_2)(x)\varphi(x)}{|x-y|^\alpha}dxdy\\
  &+(6-\alpha)\int_{\Omega_\varepsilon}\int_{\Omega_\varepsilon}\frac{\big[V+\phi_1+\vartheta(\phi_1-\phi_2)\big]^{5-\alpha}(y)(\phi_1-\phi_2)(y)(V+\phi_2)^{5-\alpha}(x)\varphi(x)}{|x-y|^\alpha}dxdy\\
  &-(6-\alpha)\int_{\Omega_\varepsilon}\int_{\Omega_\varepsilon}\frac{V^{5-\alpha}(y)(\phi_1-\phi_2)(y)V^{5-\alpha}(x)\varphi(x)}{|x-y|^\alpha}dxdy\bigg|\\
  \leq &C\bigg|\int_{\Omega_\varepsilon}\int_{\Omega_\varepsilon}\frac{(V+\phi_1)^{6-\alpha}(y)\big[V+\phi_1+\kappa(\phi_1-\phi_2)\big]^{3-\alpha}(x)(\phi_1-\phi_2)^2(x)\varphi(x)}{|x-y|^\alpha}dxdy\bigg|\\
  &+C\bigg|\int_{\Omega_\varepsilon}\int_{\Omega_\varepsilon}\frac{\big[V+\phi_1+\kappa(\phi_1-\phi_2)\big]^{4-\alpha}(y)(\phi_1-\phi_2)^2(y)(V+\phi_2)^{5-\alpha}(x)\varphi(x)}{|x-y|^\alpha}dxdy\bigg|\\
  &+C\bigg|\int_{\Omega_\varepsilon}\int_{\Omega_\varepsilon}\frac{V^{6-\alpha}(y)\big[V+\phi_1+\kappa(\phi_1-\phi_2)\big]^{3-\alpha}(x)(\phi_1-\phi_2)^2(x)\varphi(x)}{|x-y|^\alpha}dxdy\bigg|\\
  &+C\bigg|\int_{\Omega_\varepsilon}\int_{\Omega_\varepsilon}\frac{\big[V+\phi_1+\kappa(\phi_1-\phi_2)\big]^{4-\alpha}(y)(\phi_1-\phi_2)^2(y)V^{5-\alpha}(x)\varphi(x)}{|x-y|^\alpha}dxdy\bigg|\\
  \leq &C\|\phi_1-\phi_2\|^2_{H_0^1(\Omega_\varepsilon)}\|\varphi\|_{H_0^1(\Omega_\varepsilon)},
\end{align*}
where $\vartheta,\kappa \in (0,1)$.
Therefore, for any $\phi_1,\phi_2\in \mathcal{B}$, we have
\begin{align*}
  \|T(\phi_1)-T(\phi_2)\|_{H_0^1(\Omega_\varepsilon)}\leq C\|\phi_1-\phi_2\|^2_{H_0^1(\Omega_\varepsilon)}\leq&  C(\|\phi_1\|_{H_0^1(\Omega_\varepsilon)}+\|\phi_1\|_{H_0^1(\Omega_\varepsilon)})\|\phi_1-\phi_2\|_{H_0^1(\Omega_\varepsilon)}\\
  <&\frac{1}{2}\|\phi_1-\phi_2\|_{H_0^1(\Omega_\varepsilon)}.
\end{align*}

Therefore,  by the contraction mapping theorem, we conclude the result. Finally, using the implicit function theorem, we can prove the regularity of  $\phi_{\mu',\xi'}$. Thus we complete the proof.
\qed

\section{$C^1$-estimate}\label{C^1}
It is important, for later purposes, to understand the differentiability of $\phi_{\mu',\xi'}$ (which is given in Proposition \ref{fixed}) with respect to the variables $\mu'$ and $\xi'_i$, $i=1,2,3$, for a fixed $\varepsilon>0$. We have the following result.
\begin{lemma}\label{ti}
Under the conditions of Lemma \ref{non}, the derivative $\nabla_{\mu',\xi'}\partial _{\mu'}\phi_{\mu',\xi'}$ exists and is a continuous function. Besides, we have
\begin{equation*}
  \|\nabla_{\mu',\xi'}\phi_{\mu',\xi'}\|_{H_0^1(\Omega_\varepsilon)}+\|\nabla_{\mu',\xi'}\partial _{\mu'}\phi_{\mu',\xi'}\|_{H_0^1(\Omega_\varepsilon)}\leq C\varepsilon.
\end{equation*}
\end{lemma}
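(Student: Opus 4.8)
Both the differentiability claims and the size bounds will follow by differentiating, in the parameters $\mu'$ and $\xi'_i$, the equation that characterizes $\phi_{\mu',\xi'}$, and by using the uniform coercivity of $\Pi^\perp_{\mu',\xi'}L$ from Lemma \ref{xian}. Write $Z_0=\partial V/\partial\mu'$ and $Z_i=\partial V/\partial\xi'_i$ ($i=1,2,3$); by construction $\phi=\phi_{\mu',\xi'}$ satisfies
\begin{equation*}
L(\phi)=N(\phi)+E+\sum_{j=0}^3 c_j Z_j,\qquad \langle\phi,Z_j\rangle=0\quad(j=0,1,2,3),
\end{equation*}
for scalars $c_j=c_j(\mu',\xi')$. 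Testing against each $Z_k$ and using that $\langle L(\phi),Z_k\rangle=O(\|\phi\|_{H_0^1(\Omega_\varepsilon)})=O(\varepsilon)$ together with the fact that the Gram matrix $(\langle Z_j,Z_k\rangle)_{j,k}$ is invertible with inverse bounded uniformly in $\varepsilon$ (rescale to $\Omega_\mu$ and use Lemma \ref{ker}, controlling the $Z_j$ via \eqref{gu2}, \eqref{gu1}, \eqref{gu3}), one gets $|c_j|\le C\varepsilon$ for all $j$. The full system $(\phi,\mathbf c)\mapsto\big(L\phi-N(\phi)-E-\sum_j c_jZ_j,\,(\langle\phi,Z_j\rangle)_j\big)$ has, at the solution, a $(\phi,\mathbf c)$-differential $(\psi,\mathbf d)\mapsto\big(L\psi-N'(\phi)[\psi]-\sum_j d_jZ_j,\,(\langle\psi,Z_j\rangle)_j\big)$ which, since $N'(\phi)$ has operator norm $O(\|\phi\|)=O(\varepsilon)$, is for small $\varepsilon$ an isomorphism with inverse bounded independently of $\varepsilon$ (on $K^\perp_{\mu',\xi'}$ this is Lemma \ref{xian} up to a small perturbation, and the $\mathbb R^4$-block is the Gram matrix). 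As the data $V$ — hence $L$, $N$, $E$, and the $Z_j$ — depend on $(\mu',\xi')$ in a $C^2$ fashion on the admissible range (full statement of Lemma \ref{piex}, plus \eqref{gu2}, \eqref{gu1}, \eqref{gu3}), the implicit function theorem yields that $(\mu',\xi')\mapsto(\phi_{\mu',\xi'},\mathbf c(\mu',\xi'))$ is $C^2$; in particular $\nabla_{\mu',\xi'}\phi_{\mu',\xi'}$ and $\nabla_{\mu',\xi'}\partial_{\mu'}\phi_{\mu',\xi'}$ exist and are continuous.

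\textbf{First derivatives.} To bound $\partial\phi:=\partial_{\mu'}\phi$ (the argument for $\partial_{\xi'_i}\phi$ is identical), differentiate the defining identity:
\begin{equation*}
\big(L-N'(\phi)\big)(\partial\phi)=-(\partial_{\mu'}L)\phi+(\partial_{\mu'}N)_{\mathrm{expl}}+\partial_{\mu'}E+\sum_j c_j\,\partial_{\mu'}Z_j+\sum_j(\partial_{\mu'}c_j)Z_j=:\mathcal R+\sum_j(\partial_{\mu'}c_j)Z_j,
\end{equation*}
where $(\partial_{\mu'}N)_{\mathrm{expl}}$ is the part from the explicit $\mu'$-dependence of $N$ through $V$. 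Each term in $\mathcal R$ is $O(\varepsilon)$ in $H^{-1}(\Omega_\varepsilon)$: $\|(\partial_{\mu'}L)\phi\|\le C\|\phi\|\le C\varepsilon$; $\|(\partial_{\mu'}N)_{\mathrm{expl}}\|\le C\|\phi\|\le C\varepsilon$ by the computations of Lemma \ref{non}; $\|\partial_{\mu'}E\|_{H_0^1(\Omega_\varepsilon)}\le C\varepsilon$ by the $C^1$-part of Lemma \ref{piex}, argued exactly as in Lemma \ref{err}; and $\|\sum_j c_j\,\partial_{\mu'}Z_j\|\le C\varepsilon$ from $|c_j|\le C\varepsilon$ and \eqref{gu2}, \eqref{gu1}, \eqref{gu3}. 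Split $\partial\phi=\Phi^\perp+\sum_j b_jZ_j$ with $\Phi^\perp\in K^\perp_{\mu',\xi'}$; differentiating $\langle\phi,Z_j\rangle=0$ gives $\sum_k b_k\langle Z_k,Z_j\rangle=-\langle\phi,\partial_{\mu'}Z_j\rangle=O(\varepsilon)$, so $|b_j|\le C\varepsilon$. Applying $\Pi^\perp_{\mu',\xi'}$ annihilates $\sum_j(\partial_{\mu'}c_j)Z_j$; moreover $\|LZ_j\|_{H^{-1}}=O(\varepsilon)$ (it equals a parameter derivative of the difference of the Choquard nonlinearities at $w_{\mu',\xi'}$ and at $V$, cf. \eqref{5}), hence $\|\Pi^\perp_{\mu',\xi'}(L-N'(\phi))\sum_j b_jZ_j\|\le C\varepsilon^2$. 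Thus $\Pi^\perp_{\mu',\xi'}(L-N'(\phi))\Phi^\perp=\Pi^\perp_{\mu',\xi'}\mathcal R+O(\varepsilon^2)$, and since $\|N'(\phi)\|=O(\varepsilon)$, Lemma \ref{xian} gives $\tfrac{\varrho}{2}\|\Phi^\perp\|_{H_0^1(\Omega_\varepsilon)}\le\|\Pi^\perp_{\mu',\xi'}(L-N'(\phi))\Phi^\perp\|\le C\varepsilon$ for $\varepsilon$ small. Therefore $\|\partial\phi\|_{H_0^1(\Omega_\varepsilon)}\le\|\Phi^\perp\|+\sum_j|b_j|\,\|Z_j\|\le C\varepsilon$, i.e. $\|\nabla_{\mu',\xi'}\phi_{\mu',\xi'}\|_{H_0^1(\Omega_\varepsilon)}\le C\varepsilon$.

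\textbf{The mixed second derivative.} Differentiate the displayed equation for $\partial\phi$ once more in $(\mu',\xi')$. On the left one obtains $\big(L-N'(\phi)\big)\nabla_{\mu',\xi'}\partial_{\mu'}\phi$, and on the right a sum of: second explicit parameter-derivatives of $L$ applied to $\phi$, of $E$, and of $N$ (all $O(\varepsilon)$, the $E$-term by the $C^2$-part of Lemma \ref{piex}); bilinear terms of the shape $(\nabla_{\mu',\xi'}[\,\cdot\,])\,\partial_{\mu'}\phi$ and $N''(\phi)[\partial_{\mu'}\phi,\cdot]$ (all $O(\varepsilon^2)$, being products of quantities of size $\le C\varepsilon$); terms $c_j\,\nabla_{\mu',\xi'}\partial_{\mu'}Z_j$ (size $O(\varepsilon)$); and $\sum_j(\nabla_{\mu',\xi'}\partial_{\mu'}c_j)Z_j$, which $\Pi^\perp_{\mu',\xi'}$ kills. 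Proceeding exactly as in the previous step — split off the $K_{\mu',\xi'}$-component using the twice-differentiated orthogonality conditions (whose right-hand side is again $O(\varepsilon)$ by \eqref{gu2}, \eqref{gu1}, \eqref{gu3} and the bounds on $\phi,\partial\phi$), project onto $K^\perp_{\mu',\xi'}$, discard the $O(\varepsilon^2)$ terms, and invoke Lemma \ref{xian} — one concludes $\|\nabla_{\mu',\xi'}\partial_{\mu'}\phi_{\mu',\xi'}\|_{H_0^1(\Omega_\varepsilon)}\le C\varepsilon$, which together with the first-derivative bound gives the asserted inequality. Continuity of $\nabla_{\mu',\xi'}\partial_{\mu'}\phi_{\mu',\xi'}$ is already furnished by the $C^2$ implicit function argument above.

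\textbf{Main obstacle.} The crucial ingredient is that the linearized operator is invertible with norm controlled uniformly in $\varepsilon$, both on the \emph{moving} orthogonal complement $K^\perp_{\mu',\xi'}$ and on the finite-dimensional multiplier block: this is precisely Lemma \ref{xian}, combined with the facts that $N'(\phi)$ is a genuine $O(\varepsilon)$-perturbation and that the Gram matrix of the $Z_j$ is uniformly nondegenerate. Everything else is bookkeeping of parameter derivatives, the one recurring subtlety being to check that every parameter derivative of $E$, up to second order, remains $O(\varepsilon)$ — which is exactly what the higher-order estimates in Lemma \ref{piex} are designed to deliver.
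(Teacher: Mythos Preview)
Your proposal is correct and follows essentially the same strategy as the paper: differentiate the equation characterizing $\phi_{\mu',\xi'}$ in the parameters, bound the resulting forcing terms by $C\varepsilon$ using Lemma \ref{piex} and the estimates behind Lemmas \ref{non}--\ref{err}, and invoke the uniform coercivity of Lemma \ref{xian}.

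The main difference is one of rigor rather than method. The paper works directly with the projected equation $\Pi^\perp_{\mu',\xi'}L(X_i)=\Pi^\perp_{\mu',\xi'}\{\cdots\}$ and applies Lemma \ref{xian} without explicitly addressing that $X_i=\partial_{\xi'_i}\phi$ need not lie in $K^\perp_{\mu',\xi'}$, nor the dependence of $\Pi^\perp_{\mu',\xi'}$ itself on $(\mu',\xi')$. Your Lagrange-multiplier formulation $L(\phi)=N(\phi)+E+\sum_j c_jZ_j$, together with the explicit splitting $\partial\phi=\Phi^\perp+\sum_j b_jZ_j$ and the differentiated orthogonality relations, handles both issues cleanly; the bound $\|LZ_j\|_{H^{-1}}=O(\varepsilon)$ you use is exactly what the paper establishes (implicitly) around \eqref{5} and the estimate of $P$ in Lemma \ref{xian}. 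Your implicit-function-theorem argument also gives the $C^2$ regularity (and hence continuity of $\nabla_{\mu',\xi'}\partial_{\mu'}\phi$) in one stroke, whereas the paper simply asserts this. In short, both proofs rest on Lemma \ref{xian} plus the $O(\varepsilon)$ size of $E$, $N(\phi)$ and their parameter derivatives; yours makes the finite-dimensional bookkeeping explicit.
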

\begin{proof}
Let us consider differentiation with respect to $\xi'_i$, $i=1,2,3$. For notational simplicity, we write $X_i:=\partial_{\xi'_i}$. Then from \eqref{zhuan}, we have
\begin{align}\label{tidu}
  \Pi^\perp_{\mu',\xi'}L(X_i)=&\Pi^\perp_{\mu',\xi'}\bigg\{
  (6-\alpha)\Big(\int_{\Omega_\varepsilon}\frac{(V+\phi_{\mu',\xi'})^{5-\alpha}(y)(\partial_{\xi'_i}V+X_i)(y)}{|x-y|^\alpha}dy\Big)(V+\phi_{\mu',\xi'})^{5-\alpha} \nonumber\\
  &+(5-\alpha)\Big(\int_{\Omega_\varepsilon}\frac{(V+\phi_{\mu',\xi'})^{6-\alpha}(y)}{|x-y|^\alpha}dy\Big)(V+\phi_{\mu',\xi'})^{4-\alpha}(\partial_{\xi'_i}V+X_i)\nonumber\\
  &-(6-\alpha)\Big(\int_{\Omega_\varepsilon}\frac{V^{5-\alpha}(y)X_i(y)}{|x-y|^\alpha}dy\Big)V^{5-\alpha}
  -(5-\alpha)\displaystyle\Big(\int_{\Omega_\varepsilon}\frac{V^{6-\alpha}(y)}{|x-y|^\alpha}dy\Big)V^{4-\alpha}X_i \nonumber\\
  &-(6-\alpha)\Big(\int_{\Omega_\varepsilon}\frac{w_{\mu',\xi'}^{5-\alpha}(y)(\partial_{\xi'_i}w_{\mu',\xi'})(y)}{|x-y|^\alpha}dy\Big)w_{\mu',\xi'}^{5-\alpha}-(5-\alpha)\Big(\int_{\Omega_\varepsilon}\frac{w_{\mu',\xi'}^{6-\alpha}(y)}{|x-y|^\alpha}dy\Big)w_{\mu',\xi'}^{4-\alpha}\partial_{\xi'_i}w_{\mu',\xi'}
  \bigg\}\nonumber\\
  \leq &C\Pi^\perp_{\mu',\xi'}\bigg\{(6-\alpha)\Big(\int_{\Omega_\varepsilon}\frac{(V+\phi_{\mu',\xi'})^{5-\alpha}(y)(\partial_{\xi'_i}V)(y)}{|x-y|^\alpha}dy\Big)(V+\phi_{\mu',\xi'})^{5-\alpha}\nonumber\\
  &+(5-\alpha)\Big(\int_{\Omega_\varepsilon}\frac{(V+\phi_{\mu',\xi'})^{6-\alpha}(y)}{|x-y|^\alpha}dy\Big)(V+\phi_{\mu',\xi'})^{4-\alpha}\partial_{\xi'_i}V\nonumber\\
  &+\Big(\int_{\Omega_\varepsilon}\frac{(V+\phi_{\mu',\xi'})^{5-\alpha}(y)X_i(y)}{|x-y|^\alpha}dy\Big)V^{4-\alpha}\phi_{\mu',\xi'}
  +\Big(\int_{\Omega_\varepsilon}\frac{V^{4-\alpha}(y)\phi_{\mu',\xi'}(y)X_i(y)}{|x-y|^\alpha}dy\Big)V^{5-\alpha}
 \nonumber \\&+\Big(\int_{\Omega_\varepsilon}\frac{(V+\phi_{\mu',\xi'})^{6-\alpha}(y)}{|x-y|^\alpha}dy\Big)V^{3-\alpha}\phi_{\mu',\xi'} X_i+\Big(\int_{\Omega_\varepsilon}\frac{V^{5-\alpha}(y)\phi_{\mu',\xi'}(y)}{|x-y|^\alpha}dy\Big)V^{4-\alpha}X_i\nonumber\\
  &-(6-\alpha)\Big(\int_{\Omega_\varepsilon}\frac{w_{\mu',\xi'}^{5-\alpha}(y)(\partial_{\xi'_i}w_{\mu',\xi'})(y)}{|x-y|^\alpha}dy\Big)w_{\mu',\xi'}^{5-\alpha}-(5-\alpha)\Big(\int_{\Omega_\varepsilon}\frac{w_{\mu',\xi'}^{6-\alpha}(y)}{|x-y|^\alpha}dy\Big)w_{\mu',\xi'}^{4-\alpha}\partial_{\xi'_i}w_{\mu',\xi'}
  \bigg\}
  .
\end{align}
For any $\varphi\in H_0^1(\Omega_\varepsilon)$, using \eqref{HLS}, the H\"{o}lder and Sobolev inequalities, we have
\begin{equation*}
 \bigg|\int_{\Omega_\varepsilon}\int_{\Omega_\varepsilon}\frac{(V+\phi_{\mu',\xi'})^{5-\alpha}(y)X_i(y)V^{4-\alpha}(x)\phi_{\mu',\xi'}(x)\varphi(x)}{|x-y|^\alpha}dxdy\bigg|\leq C\|\phi_{\mu',\xi'}\|_{H_0^1(\Omega_\varepsilon)}\|X_i\|_{H_0^1(\Omega_\varepsilon)}\|\varphi\|_{H_0^1(\Omega_\varepsilon)},
\end{equation*}
\begin{equation*}
 \bigg|\int_{\Omega_\varepsilon}\int_{\Omega_\varepsilon}\frac{V^{4-\alpha}(y)\phi_{\mu',\xi'}(y)X_i(y)V^{5-\alpha}(x)\varphi(x)}{|x-y|^\alpha}dxdy\bigg|\leq C\|\phi_{\mu',\xi'}\|_{H_0^1(\Omega_\varepsilon)}\|X_i\|_{H_0^1(\Omega_\varepsilon)}\|\varphi\|_{H_0^1(\Omega_\varepsilon)},
\end{equation*}
and
\begin{equation*}
 \bigg| \int_{\Omega_\varepsilon}\int_{\Omega_\varepsilon}\frac{(V+\phi_{\mu',\xi'})^{6-\alpha}(y)V^{3-\alpha}(x)\phi_{\mu',\xi'}(x)X_i(x)\varphi(x)}{|x-y|^\alpha}dxdy\bigg|\leq C\|\phi_{\mu',\xi'}\|_{H_0^1(\Omega_\varepsilon)}\|X_i\|_{H_0^1(\Omega_\varepsilon)}\|\varphi\|_{H_0^1(\Omega_\varepsilon)},
\end{equation*}
\begin{equation*}
\bigg|  \int_{\Omega_\varepsilon}\int_{\Omega_\varepsilon}\frac{V^{5-\alpha}(y)\phi_{\mu',\xi'}(y)V^{4-\alpha}(x)X_i(x)\varphi(x)}{|x-y|^\alpha}dxdy\bigg|\leq C\|\phi_{\mu',\xi'}\|_{H_0^1(\Omega_\varepsilon)}\|X_i\|_{H_0^1(\Omega_\varepsilon)}\|\varphi\|_{H_0^1(\Omega_\varepsilon)}.
\end{equation*}
This with $\|\phi_{\mu',\xi'}\|_{H_0^1(\Omega_\varepsilon)}\leq C\varepsilon$ yields that
\begin{align}\label{tidu1}
  \bigg\|&\Big(\int_{\Omega_\varepsilon}\frac{(V+\phi_{\mu',\xi'})^{5-\alpha}(y)X_i(y)}{|x-y|^\alpha}dy\Big)V^{4-\alpha}\phi_{\mu',\xi'}
  +\Big(\int_{\Omega_\varepsilon}\frac{V^{4-\alpha}(y)\phi_{\mu',\xi'}(y)X_i(y)}{|x-y|^\alpha}dy\Big)V^{5-\alpha}
  \nonumber\\&+\Big(\int_{\Omega_\varepsilon}\frac{(V+\phi_{\mu',\xi'})^{6-\alpha}(y)}{|x-y|^\alpha}dy\Big)V^{3-\alpha}\phi _{\mu',\xi'} X_i+\Big(\int_{\Omega_\varepsilon}\frac{V^{5-\alpha}(y)\phi_{\mu',\xi'}(y)}{|x-y|^\alpha}dy\Big)V^{4-\alpha}X_i\bigg\|_{H_0^1(\Omega_\varepsilon)}
  \nonumber\\ \leq& C\|\phi_{\mu',\xi'}\|_{H_0^1(\Omega_\varepsilon)}\|X_i\|_{H_0^1(\Omega_\varepsilon)}\leq C\varepsilon \|X_i\|_{H_0^1(\Omega_\varepsilon)}.
\end{align}
Moreover, for any $\varphi\in H_0^1(\Omega_\varepsilon)$, using \eqref{HLS}, \eqref{gu3}, the H\"{o}lder and Sobolev inequalities, we have
\begin{align*}
  &\bigg|\int_{\Omega_\varepsilon}\int_{\Omega_\varepsilon}\frac{(V+\phi_{\mu',\xi'})^{5-\alpha}(y)(\partial_{\xi'_i}V)(y)(V+\phi_{\mu',\xi'})^{5-\alpha}(x)\varphi(x)}{|x-y|^\alpha}dxdy\\
  &-\int_{\Omega_\varepsilon}\int_{\Omega_\varepsilon}
  \frac{w_{\mu',\xi'}^{5-\alpha}(y)(\partial_{\xi'_i}w_{\mu',\xi'})(y)w_{\mu',\xi'}^{5-\alpha}(x)\varphi(x)}{|x-y|^\alpha}dxdy\bigg|\\
  \leq & C\bigg|\int_{\Omega_\varepsilon}\int_{\Omega_\varepsilon}
  \frac{w_{\mu',\xi'}^{5-\alpha}(y)\frac{\partial [\varepsilon^{1/2}\pi_{\mu,\xi}(\varepsilon y)]}{\partial \xi'_i}w_{\mu',\xi'}^{5-\alpha}(x)\varphi(x)}{|x-y|^\alpha}dxdy\bigg|
  \leq  C\varepsilon^2\|\varphi\|_{H_0^1(\Omega_\varepsilon)},
\end{align*}
and
\begin{align*}
  &\bigg|\int_{\Omega_\varepsilon}\int_{\Omega_\varepsilon}\frac{(V+\phi_{\mu',\xi'})^{6-\alpha}(y)(V+\phi_{\mu',\xi'})^{4-\alpha}(x)(\partial_{\xi'_i}V)(x)\varphi(x)}{|x-y|^\alpha}dxdy\\
  &-\int_{\Omega_\varepsilon}\int_{\Omega_\varepsilon}
  \frac{w_{\mu',\xi'}^{6-\alpha}(y)w_{\mu',\xi'}^{4-\alpha}(x)(\partial_{\xi'_i}w_{\mu',\xi'})(x)\varphi(x)}{|x-y|^\alpha}dxdy\bigg|\\
  \leq& C\bigg|\int_{\Omega_\varepsilon}\int_{\Omega_\varepsilon}
  \frac{w_{\mu',\xi'}^{6-\alpha}(y)w_{\mu',\xi'}^{4-\alpha}(x)\frac{\partial [\varepsilon^{1/2}\pi_{\mu,\xi}(\varepsilon x)]}{\partial \mu'_i}\varphi(x)}{|x-y|^\alpha}dxdy\bigg|\leq  C\varepsilon^2\|\varphi\|_{H_0^1(\Omega_\varepsilon)}.
\end{align*}
Hence, we obtain
\begin{align}\label{tidu2}
  \bigg\|&(6-\alpha)\Big(\int_{\Omega_\varepsilon}\frac{(V+\phi_{\mu',\xi'})^{5-\alpha}(y)(\partial_{\xi'_i}V)(y)}{|x-y|^\alpha}dy\Big)(V+\phi_{\mu',\xi'})^{5-\alpha}\nonumber\\
  &+(5-\alpha)\Big(\int_{\Omega_\varepsilon}\frac{(V+\phi_{\mu',\xi'})^{6-\alpha}(y)}{|x-y|^\alpha}dy\Big)(V+\phi_{\mu',\xi'})^{4-\alpha}\partial_{\xi'_i}V\nonumber\\
  &-(6-\alpha)\Big(\int_{\Omega_\varepsilon}\frac{w_{\mu',\xi'}^{5-\alpha}(y)(\partial_{\xi'_i}w_{\mu',\xi'})(y)}{|x-y|^\alpha}dy\Big)w_{\mu',\xi'}^{5-\alpha}
 \nonumber\\& -(5-\alpha)\Big(\int_{\Omega_\varepsilon}\frac{w_{\mu',\xi'}^{6-\alpha}(y)}{|x-y|^\alpha}dy\Big)w_{\mu',\xi'}^{4-\alpha}\partial_{\xi'_i}w_{\mu',\xi'}\bigg\|_{H_0^1(\Omega_\varepsilon)}\leq C\varepsilon^2.
\end{align}
The conclusion follows from \eqref{tidu}-\eqref{tidu2} and Lemma \ref{xian}. The corresponding result for differentiation with respect to $\mu'$ follows similarly. This finishes the proof.
\end{proof}

We shall next analyse the differentiability of  $N(\phi_{\mu',\xi'})$ with respect to the variables $\mu'$ and $\xi'_i$, $i=1,2,3$.
\begin{lemma}\label{non'}
Under the conditions of Lemma \ref{non},
there holds
\begin{equation*}
  \|\nabla_{\mu',\xi'}N(\phi_{\mu',\xi'})\|_{H_0^1(\Omega_\varepsilon)}+\|\nabla_{\mu',\xi'}\partial _{\mu'}N(\phi_{\mu',\xi'})\|_{H_0^1(\Omega_\varepsilon)}\leq C\varepsilon.
\end{equation*}
\end{lemma}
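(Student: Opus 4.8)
The plan is to differentiate $N(\phi_{\mu',\xi'})$ by viewing it as a composition: the map $N$ depends on $(\mu',\xi')$ both explicitly, through $V=V_{\mu',\xi'}$, and implicitly, through $\phi_{\mu',\xi'}$. Setting $F(u)=\big(\int_{\Omega_\varepsilon}|x-y|^{-\alpha}u^{6-\alpha}(y)\,dy\big)u^{5-\alpha}$, the formula for $N$ in Section \ref{Reduction} reads
\[
  N(\phi)=F(V+\phi)-F(V)-F'(V)[\phi],
\]
so $N(\phi)$ is the second-order Taylor remainder of $F$ at $V$. Regarding $N=N(V,\phi)$, this means its $\phi$-derivative $\partial_\phi N[\psi]=F'(V+\phi)[\psi]-F'(V)[\psi]$ carries at least one factor of $\phi$, while its $V$-derivative $\partial_V N[\chi]=F'(V+\phi)[\chi]-F'(V)[\chi]-F''(V)[\phi,\chi]$ carries at least two factors of $\phi$. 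By the chain rule,
\[
  \partial_{a}\big[N(\phi_{\mu',\xi'})\big]=\partial_V N\big[\partial_{a}V\big]+\partial_\phi N\big[\partial_{a}\phi_{\mu',\xi'}\big],\qquad a\in\{\mu',\xi'_1,\xi'_2,\xi'_3\},
\]
and one differentiates once more for the mixed second derivatives $\partial_b\partial_{\mu'}$. Each resulting term is a finite sum of HLS-type double integrals, handled exactly as in the proofs of Lemmas \ref{non}, \ref{err} and Proposition \ref{fixed}, by the Hardy--Littlewood--Sobolev inequality \eqref{HLS} together with the H\"older and Sobolev inequalities; the only real task is bookkeeping of how many factors of $\phi_{\mu',\xi'}$ (or of its $(\mu',\xi')$-derivatives) each term contains.

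The quantitative input is threefold: $\|\phi_{\mu',\xi'}\|_{H_0^1(\Omega_\varepsilon)}\le C\varepsilon$ from Proposition \ref{fixed}; $\|\nabla_{\mu',\xi'}\phi_{\mu',\xi'}\|_{H_0^1(\Omega_\varepsilon)}+\|\nabla_{\mu',\xi'}\partial_{\mu'}\phi_{\mu',\xi'}\|_{H_0^1(\Omega_\varepsilon)}\le C\varepsilon$ from Lemma \ref{ti} (which also yields the differentiability of the composition); and, by \eqref{gu2}, \eqref{gu1}, \eqref{gu3} together with the constraint $\mu'\in(\delta,\delta^{-1})$, the fact that the derivatives of $V$ in $(\mu',\xi')$ up to second order are bounded in $H_0^1(\Omega_\varepsilon)$ uniformly over the admissible parameters. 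Plugging these into the first-order chain-rule identity, the term $\partial_\phi N[\partial_{a}\phi_{\mu',\xi'}]$ is bounded by $C\|\phi_{\mu',\xi'}\|_{H_0^1(\Omega_\varepsilon)}\|\partial_{a}\phi_{\mu',\xi'}\|_{H_0^1(\Omega_\varepsilon)}\le C\varepsilon^2$, and $\partial_V N[\partial_{a}V]$, being quadratic in $\phi_{\mu',\xi'}$, is bounded by $C\|\phi_{\mu',\xi'}\|_{H_0^1(\Omega_\varepsilon)}^2\|\partial_{a}V\|_{H_0^1(\Omega_\varepsilon)}\le C\varepsilon^2$. Hence $\|\nabla_{\mu',\xi'}N(\phi_{\mu',\xi'})\|_{H_0^1(\Omega_\varepsilon)}\le C\varepsilon^2\le C\varepsilon$.

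For $\nabla_{\mu',\xi'}\partial_{\mu'}N(\phi_{\mu',\xi'})$ I would differentiate the identity above in $b\in\{\mu',\xi'_1,\xi'_2,\xi'_3\}$. This produces terms of the types $\partial_V^2 N[\partial_bV,\partial_{\mu'}V]$, $\partial_V\partial_\phi N[\partial_bV,\partial_{\mu'}\phi]$ (and its symmetric counterpart), $\partial_V N[\partial_b\partial_{\mu'}V]$, $\partial_\phi^2 N[\partial_b\phi,\partial_{\mu'}\phi]$ and $\partial_\phi N[\partial_b\partial_{\mu'}\phi]$. In each of them the combined power of $\phi_{\mu',\xi'}$ and of its $(\mu',\xi')$-derivatives is at least two (for instance $\partial_\phi^2 N$ carries no bare $\phi$, but is bilinear in $\partial_b\phi$ and $\partial_{\mu'}\phi$, both $O(\varepsilon)$ by Lemma \ref{ti}; and $\partial_\phi N[\partial_b\partial_{\mu'}\phi]$ is linear in $\phi$ and linear in $\partial_b\partial_{\mu'}\phi$, again controlled via Lemma \ref{ti}), so the same HLS/H\"older/Sobolev estimates bound every term by $C\varepsilon^2$. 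This yields $\|\nabla_{\mu',\xi'}\partial_{\mu'}N(\phi_{\mu',\xi'})\|_{H_0^1(\Omega_\varepsilon)}\le C\varepsilon$ and completes the proof.

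The main difficulty is purely organizational: expanding $N$ and its parameter derivatives produces a long list of nonlocal double integrals, and one must check that in each of them the factors involving $\phi_{\mu',\xi'}$ (or its derivatives) appear to a combined power at least two --- which is precisely what the structure of $N$ as a second-order Taylor remainder guarantees --- so that the smallness from Proposition \ref{fixed} and Lemma \ref{ti} closes the estimate. No new idea beyond the HLS/H\"older/Sobolev scheme already used throughout Sections \ref{Reduction} and \ref{C^1} is needed; the only delicate point is the absence of derivative loss, which is supplied by the bound $\|\nabla_{\mu',\xi'}\partial_{\mu'}\phi_{\mu',\xi'}\|_{H_0^1(\Omega_\varepsilon)}\le C\varepsilon$ in Lemma \ref{ti}.
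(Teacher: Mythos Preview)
Your proposal is correct and follows essentially the same approach as the paper: differentiate $N(\phi_{\mu',\xi'})$ with respect to the parameters, then control each resulting nonlocal term via the Hardy--Littlewood--Sobolev, H\"older and Sobolev inequalities, using the smallness of $\phi_{\mu',\xi'}$ and its derivatives from Proposition~\ref{fixed} and Lemma~\ref{ti}. Your organization through the Taylor-remainder structure $N(\phi)=F(V+\phi)-F(V)-F'(V)[\phi]$ and the chain rule is a cleaner way of tracking why each term carries the required number of $\phi$-factors, whereas the paper carries out the same bookkeeping by expanding $\partial_{\xi'_i}N(\phi_{\mu',\xi'})$ term by term; the estimates and the conclusion $O(\varepsilon^2)\le C\varepsilon$ are identical.
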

\begin{proof}
Let us consider differentiation with respect to $\xi'_i$, $i=1,2,3$. Then by the definition of $N(\phi_{\mu',\xi'})$, we have
\begin{align*}
  &\partial_{\xi'_i}N(\phi_{\mu',\xi'})\\=&(6-\alpha)\Big(\int_{\Omega_\varepsilon}\frac{(V+\phi_{\mu',\xi'})^{5-\alpha}(y)X_i(y)}{|x-y|^\alpha}dy\Big)(V+\phi_{\mu',\xi'})^{5-\alpha}-(6-\alpha)\Big(\int_{\Omega_\varepsilon}\frac{V^{5-\alpha}(y)X_i(y)}{|x-y|^\alpha}dy\Big)V^{5-\alpha}\\&
  +(5-\alpha)\Big(\int_{\Omega_\varepsilon}\frac{(V+\phi_{\mu',\xi'})^{6-\alpha}(y)}{|x-y|^\alpha}dy\Big)(V+\phi_{\mu',\xi'})^{4-\alpha}X_i
  -(5-\alpha)\Big(\int_{\Omega_\varepsilon}\frac{V^{6-\alpha}(y)}{|x-y|^\alpha}dy\Big)V^{4-\alpha}X_i\\
  &+(6-\alpha)\Big(\int_{\Omega_\varepsilon}\frac{(V+\phi_{\mu',\xi'})^{5-\alpha}(y)(\partial_{\xi'_i} V)(y)}{|x-y|^\alpha}dy\Big)(V+\phi_{\mu',\xi'})^{5-\alpha}\\
  &-(6-\alpha)\Big(\int_{\Omega_\varepsilon}\frac{V^{5-\alpha}(y)(\partial_{\xi'_i} V)(y)}{|x-y|^\alpha}dy\Big)V^{5-\alpha}
-(6-\alpha)(5-\alpha)\Big(\int_{\Omega_\varepsilon}\frac{V^{4-\alpha}(y)(\partial_{\xi'_i}V)(y)\phi_{\mu',\xi'}(y)}{|x-y|^\alpha}dy\Big)V^{5-\alpha}\\&
  -(6-\alpha)(5-\alpha)\Big(\int_{\Omega_\varepsilon}\frac{V^{5-\alpha}(y)(\partial_{\xi'_i}V)(y)}{|x-y|^\alpha}dy\Big)V^{4-\alpha}\phi_{\mu',\xi'}\\&
  +(5-\alpha)\Big(\int_{\Omega_\varepsilon}\frac{(V+\phi_{\mu',\xi'})^{6-\alpha}(y)}{|x-y|^\alpha}dy\Big)(V+\phi_{\mu',\xi'})^{4-\alpha}\partial_{\xi'_i} V\\
  &-(5-\alpha)\Big(\int_{\Omega_\varepsilon}\frac{V^{6-\alpha}(y)}{|x-y|^\alpha}dy\Big)V^{4-\alpha}\partial_{\xi'_i} V
  -(6-\alpha)(5-\alpha)\Big(\int_{\Omega_\varepsilon}\frac{V^{5-\alpha}(y)\phi_{\mu',\xi'}(y)}{|x-y|^\alpha}dy\Big)V^{4-\alpha}\partial_{\xi'_i}V\\
  &-(5-\alpha)(4-\alpha)\Big(\int_{\Omega_\varepsilon}\frac{V^{6-\alpha}(y)}{|x-y|^\alpha}dy\Big)V^{3-\alpha}\phi_{\mu',\xi'}\partial_{\xi'_i}V.
\end{align*}
Hence, for any $\varphi\in H_0^1(\Omega_\varepsilon)$, similar to Lemma \ref{non}, we have
\begin{align*}
  \Big|\int_{\Omega_\varepsilon}\partial_{\xi'_i}N(\phi_{\mu',\xi'})\varphi dx\Big|\leq C\|\phi_{\mu',\xi'}\|_{H_0^1(\Omega_\varepsilon)}\|X_i\|_{H_0^1(\Omega_\varepsilon)}\|\varphi\|_{H_0^1(\Omega_\varepsilon)}+C\|\phi_{\mu',\xi'}\|^2_{H_0^1(\Omega_\varepsilon)}\|\varphi\|_{H_0^1(\Omega_\varepsilon)}.
\end{align*}
This with Lemma \ref{ti} yields that $\|\partial_{\xi'_i}N(\phi_{\mu',\xi'})\|_{H_0^1(\Omega_\varepsilon)}\leq C\varepsilon$. The corresponding result for differentiation with respect to $\mu'$ follows similarly.
\end{proof}

\section{Proof of Theorem \ref{th}}\label{Final}
Let us consider the situation in Theorem \ref{th}. Assume the situation $(a)$ of local minimizer
\begin{equation*}
  0=\inf\limits_{\mathfrak{D}}g_{\lambda_0}<\inf\limits_{\partial\mathfrak{D}}g_{\lambda_0}.
\end{equation*}
Then for $\lambda$ close to $\lambda_0$ and $\lambda>\lambda_0$, we have
\begin{equation*}
  \inf\limits_{ \mathfrak{D}}g_{\lambda}<-A(\lambda-\lambda_0),\quad A>0.
\end{equation*}
Let us consider the shrinking set
\begin{equation*}
  \mathfrak{D}_\lambda=\Big\{x\in \mathfrak{D}:g_\lambda(x)<-\frac{A}{2}(\lambda-\lambda_0)\Big\}.
\end{equation*}
Assume
$\lambda>\lambda_0$ is sufficiently close to $\lambda_0$, then $g_\lambda=-\frac{A}{2}(\lambda-\lambda_0)$ on $\partial \mathfrak{D}_\lambda$.

Now, let us consider the situation of part $(b)$. Since $g_\lambda(\xi)$ has a non-degenerate critical point at $\lambda=\lambda_0$ and $\xi=\xi_0$, this is also the case at a certain critical point $\xi_\lambda$ for all $\lambda$ close to $\lambda_0$, where $|\xi_\lambda-\xi_0|=O(\lambda-\lambda_0)$. Moreover, for some intermediate point $\tilde{\xi}_\lambda$, there holds
\begin{equation*}
  g_\lambda(\xi_\lambda)=g_\lambda(\xi_0)+Dg_\lambda(\tilde{\xi}_\lambda)(\xi_\lambda-\xi_0)\geq A(\lambda-\lambda_0)+o(\lambda-\lambda_0),
\end{equation*}
for a certain $A>0$. Let us consider the ball $B_\rho^\lambda$ with center $\xi_\lambda$ and radius $\rho(\lambda-\lambda_0)$ for fixed and small $\rho>0$. Then we have that $g_\lambda(\xi)>\frac{A}{2}(\lambda-\lambda_0)$ for all $\xi\in B_\rho^\lambda$. In this situation, we set $ \mathfrak{D}_\lambda=B_\rho^\lambda$.

In is convenient to introduce the following relabeling of the parameter $\mu$. Let us set
\begin{equation}\label{mu}
  \mu=-\frac{a_1}{2a_2}\frac{g_\lambda(\xi)}{\lambda}\Lambda,
\end{equation}
where $\xi\in \mathfrak{D}_\lambda$ and $a_1,a_2$ are the constants given by \eqref{changshu}. We have the following result, which was proved in \cite[Lemma 3.3]{dDM}.

\begin{lemma}\label{point}
Assume the validity of one of the conditions $(a)$ or $(b)$ of Theorem \ref{th}, and consider a functional of the form:
\begin{equation*}
  \Psi_\lambda(\Lambda,\xi)=\mathcal{J}_\lambda(U_{\mu,\xi})+g^2_\lambda(\xi)\theta_\lambda(\Lambda,\xi),
\end{equation*}
where $\mu$ is given by \eqref{mu}. Denote $\nabla=(\partial _\Lambda,\partial_\xi)$, for any given $\delta>0$, assume that
\begin{equation*}
  |\theta_\lambda|+|\nabla \theta_\lambda|+|\nabla \partial _\Lambda \theta_\lambda|\rightarrow0, \quad \text{as $\lambda\rightarrow\lambda_0$},
\end{equation*}
uniformly on $\xi\in \mathfrak{D}_\lambda$ and $\Lambda\in (\delta,\delta^{-1})$. Then $\Psi_\lambda$ has a critical point $(\Lambda_\lambda,\xi_\lambda)$ with $\xi_\lambda\in \mathfrak{D}_\lambda$, $\Lambda_\lambda\rightarrow 1$.
\end{lemma}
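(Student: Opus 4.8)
The plan is to insert the expansion of Proposition~\ref{enex} into the definition of $\Psi_\lambda$, eliminate the concentration parameter by completing the square in $\Lambda$, and then reduce the search for a critical point to an elementary perturbation statement for the Robin function. Substituting $\mu=-\frac{a_1}{2a_2}\frac{g_\lambda(\xi)}{\lambda}\Lambda$ (note $g_\lambda(\xi)<0$, hence $\mu>0$, and $|g_\lambda(\xi)|\asymp\lambda-\lambda_0\to0$ on $\mathfrak{D}_\lambda$) one has $a_1\mu g_\lambda(\xi)+a_2\lambda\mu^2=\frac{a_1^2}{4a_2}\frac{g_\lambda^2(\xi)}{\lambda}[(\Lambda-1)^2-1]$, so Proposition~\ref{enex} and the definition of $\Psi_\lambda$ yield
\begin{equation*}
  \Psi_\lambda(\Lambda,\xi)=a_0+\frac{a_1^2}{4a_2}\,\frac{g_\lambda^2(\xi)}{\lambda}\Big[(\Lambda-1)^2-1+\rho_\lambda(\Lambda,\xi)\Big],
\end{equation*}
where $\rho_\lambda$ gathers $\frac{4a_2\lambda}{a_1^2}\theta_\lambda(\Lambda,\xi)$, the contribution of $-a_3\mu^2g_\lambda^2(\xi)$, and $\frac{4a_2\lambda}{a_1^2g_\lambda^2(\xi)}\mu^{5/2-\sigma}\theta(\mu,\xi)$. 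The first piece is controlled by the hypothesis on $\theta_\lambda$; the second is $O(\mu^2)=O((\lambda-\lambda_0)^2)$ together with its $\Lambda$– and $\xi$–derivatives ($g_\lambda$ being smooth on the fixed compact $\overline{\mathfrak{D}}$); the third is $O(|g_\lambda|^{1/2-\sigma})$, its $\Lambda$–derivatives being of the same size by the bounds $\mu^j\frac{\partial^{i+j}}{\partial\xi^i\partial\mu^j}\theta=O(1)$ of Proposition~\ref{enex}, and — only in case $(b)$, where this is needed — also with $\xi$–derivatives $o(1)$, since on $\mathfrak{D}_\lambda=B_\rho^\lambda$ one has $|\nabla g_\lambda|=O(\lambda-\lambda_0)$, which absorbs the singular factor $|g_\lambda|^{-1/2-\sigma}$ produced by differentiating $g_\lambda^{-2}$. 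The upshot is $|\rho_\lambda|+|\partial_\Lambda\rho_\lambda|\to0$ uniformly on $\mathfrak{D}_\lambda\times(\delta,\delta^{-1})$, and under $(b)$ also $|\nabla\rho_\lambda|+|\nabla\partial_\Lambda\rho_\lambda|\to0$.

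In case $(a)$ I would simply minimise $\Psi_\lambda$ over the compact set $[\delta,\delta^{-1}]\times\overline{\mathfrak{D}_\lambda}$ (we may take $\delta<1$; on this set $g_\lambda^2>0$) and show the minimiser $(\Lambda_\lambda,\xi_\lambda)$ is interior. If $\xi_\lambda^\ast$ minimises $g_\lambda$ on $\overline{\mathfrak{D}_\lambda}$, then $\xi_\lambda^\ast\in\mathfrak{D}_\lambda$ and $g_\lambda^2(\xi_\lambda^\ast)=(\inf_{\mathfrak{D}}g_\lambda)^2>A^2(\lambda-\lambda_0)^2$, because $\inf_{\mathfrak{D}}g_\lambda<-A(\lambda-\lambda_0)<-\tfrac A2(\lambda-\lambda_0)$ and $g_\lambda=-\tfrac A2(\lambda-\lambda_0)$ on $\partial\mathfrak{D}_\lambda$; hence $\Psi_\lambda(1,\xi_\lambda^\ast)=a_0-\frac{a_1^2}{4a_2}\frac{g_\lambda^2(\xi_\lambda^\ast)}{\lambda}(1+o(1))$. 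On the other hand, on $[\delta,\delta^{-1}]\times\partial\mathfrak{D}_\lambda$ one has $g_\lambda^2=\tfrac{A^2}{4}(\lambda-\lambda_0)^2$, while on the faces $\{\Lambda=\delta\}$ and $\{\Lambda=\delta^{-1}\}$ one has $(\Lambda-1)^2-1\ge-1+\kappa$ for a fixed $\kappa=\kappa(\delta)>0$. Comparing these lower bounds with $\Psi_\lambda(1,\xi_\lambda^\ast)$ and letting the $o(1)$'s be absorbed by the fixed positive gaps $\kappa$ and $\tfrac34$, one finds that for $\lambda$ close to $\lambda_0$ the minimum is strictly below every value of $\Psi_\lambda$ on the topological boundary of this box. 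Hence $(\Lambda_\lambda,\xi_\lambda)$ is a critical point with $\xi_\lambda\in\mathfrak{D}_\lambda$, and $\partial_\Lambda\Psi_\lambda=0$ forces $\Lambda_\lambda=1-\tfrac12\partial_\Lambda\rho_\lambda(\Lambda_\lambda,\xi_\lambda)\to1$.

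In case $(b)$, since $g_\lambda^2\neq0$ on $\mathfrak{D}_\lambda$ and $\partial_\Lambda^2\rho_\lambda=o(1)$, the implicit function theorem solves $\partial_\Lambda\Psi_\lambda=0$ by a $C^1$ map $\xi\mapsto\Lambda_\lambda(\xi)$ with $\Lambda_\lambda(\xi)\to1$ and $\partial_\xi\Lambda_\lambda=o(1)$. Setting $\widehat\Psi_\lambda(\xi)=\Psi_\lambda(\Lambda_\lambda(\xi),\xi)$ one has $\partial_\xi\widehat\Psi_\lambda=\partial_\xi\Psi_\lambda|_{\Lambda=\Lambda_\lambda(\xi)}$, so $\widehat\Psi_\lambda(\xi)=a_0-\frac{a_1^2}{4a_2}\frac{g_\lambda^2(\xi)}{\lambda}(1+r_\lambda(\xi))$ with $|r_\lambda|+|\partial_\xi r_\lambda|\to0$ on $\mathfrak{D}_\lambda$, and it suffices to find a critical point of $\widehat\Psi_\lambda$ in $B_\rho^\lambda$. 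Now $g_\lambda^2$ has a non-degenerate critical point at the critical point $q_\lambda$ of $g_\lambda$ near $\xi_0$, its Hessian there being $2g_\lambda(q_\lambda)D^2g_\lambda(q_\lambda)$, which is non-singular ($g_\lambda(q_\lambda)\neq0$ and $D^2g_\lambda$ is non-singular near $(\lambda_0,\xi_0)$). The equation $\partial_\xi\widehat\Psi_\lambda(\xi)=0$ reads $\nabla g_\lambda(\xi)=-\frac{g_\lambda(\xi)\,\partial_\xi r_\lambda(\xi)}{2(1+r_\lambda(\xi))}$, whose right-hand side is $o(\lambda-\lambda_0)$; writing $\xi=q_\lambda+(\lambda-\lambda_0)\zeta$, using $\nabla g_\lambda(q_\lambda)=0$ and the uniform invertibility of $D^2g_\lambda(q_\lambda)$, and dividing by $\lambda-\lambda_0$, one obtains a fixed-point equation $\zeta=(D^2g_\lambda(q_\lambda))^{-1}\mathcal{R}_\lambda(\zeta)$ with $\mathcal{R}_\lambda(\zeta)=o(1)$ uniformly for $|\zeta|\le\rho$ and $\mathcal{R}_\lambda$ a contraction there. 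A contraction mapping (or Brouwer degree) argument then gives $\zeta_\lambda\to0$, hence a critical point $\xi_\lambda=q_\lambda+(\lambda-\lambda_0)\zeta_\lambda\in B_\rho^\lambda$ and a critical point $(\Lambda_\lambda(\xi_\lambda),\xi_\lambda)$ of $\Psi_\lambda$ with $\Lambda_\lambda(\xi_\lambda)\to1$ and $\xi_\lambda\to\xi_0$.

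The hard part is the first step: making the control of $\rho_\lambda$ rigorous, because after dividing $\mu^{5/2-\sigma}\theta$ by $g_\lambda^2$ only the slim margin $|g_\lambda|^{1/2-\sigma}$ survives and differentiating in $\xi$ appears to ruin it — this is precisely where the refined pointwise bounds on the $\xi,\mu$–derivatives of the error in Proposition~\ref{enex}, together with the shrinking of $\mathfrak{D}_\lambda$ in case $(b)$, are essential. A secondary point is that in case $(b)$ the Hessian of the reduced functional $\widehat\Psi_\lambda$ degenerates like $\lambda-\lambda_0$, so the persistence of the non-degenerate critical point of $g_\lambda^2$ has to be run in the rescaled variable $\zeta$ rather than by a naive application of the implicit function theorem.
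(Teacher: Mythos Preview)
Your proposal is correct and is essentially the argument of \cite[Lemma~3.3]{dDM}, which is precisely what the paper invokes: in the present paper the lemma is \emph{not} proved at all but simply attributed to that reference. Your write--up reproduces the standard strategy there --- substituting the expansion of Proposition~\ref{enex} with the parametrisation \eqref{mu}, completing the square in $\Lambda$, minimising over the box in case~$(a)$, and in case~$(b)$ first eliminating $\Lambda$ by the implicit function theorem and then locating a critical point of the reduced functional via the non-degeneracy of $D^2g_\lambda$ in the rescaled variable $\zeta$. Two small remarks: in case~$(a)$ you tacitly use that $|g_\lambda|\le C(\lambda-\lambda_0)$ on $\mathfrak{D}_\lambda$ (needed to ensure $\mu\to0$), which follows from $g_{\lambda_0}\ge0$ on $\mathfrak{D}$ together with the $C^1$-dependence of $g_\lambda$ on $\lambda$; and in case~$(b)$ the contraction property of $\mathcal{R}_\lambda$ is not quite obvious, but the Brouwer degree alternative you mention works cleanly since $|\mathcal{R}_\lambda(\zeta)|=o(1)$ uniformly for $|\zeta|\le\rho$ while $|D^2g_\lambda(q_\lambda)\zeta|\ge c\rho$ on $|\zeta|=\rho$.
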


For $\phi_{\mu',\xi'}$ given in Proposition \ref{fixed}, we define
\begin{equation*}
  \tilde{\mathcal{I}}_\lambda(\mu',\xi')=\mathcal{I}_\lambda(V+\phi_{\mu',\xi'}).
\end{equation*}
Then from \cite[Lemma 3.2]{YZ}, we have the following lemma.
\begin{lemma}
Under the conditions of Lemma \ref{non}, point $(\mu',\xi')$ is a critical point of $\tilde{\mathcal{I}}_\lambda(\mu',\xi')$ if and only if $V+\phi_{\mu',\xi'}$ is a critical point of $\mathcal{I}_\lambda(v)$.
\end{lemma}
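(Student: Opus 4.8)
The statement is the standard Lyapunov--Schmidt reduction identity, so the plan is to show that solving the projected equation \eqref{zhuan} amounts to having $\mathcal{I}'_\lambda(V+\phi_{\mu',\xi'})$ lie in the finite-dimensional space $K_{\mu',\xi'}$, and then to use the formula for $\nabla_{\mu',\xi'}\tilde{\mathcal{I}}_\lambda$ to force the associated Lagrange multipliers to vanish.

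First I would record the elementary identity $L(\phi)-N(\phi)-E=\mathcal{I}'_\lambda(V+\phi)$ in $H_0^1(\Omega_\varepsilon)$ (under the Riesz identification): pairing the left-hand side with an arbitrary $\varphi\in H_0^1(\Omega_\varepsilon)$ and using \eqref{propro''} to replace $-\Delta V-\lambda\varepsilon^2 V$ by $\big(\int_{\Omega_\varepsilon}w_{\mu',\xi'}^{6-\alpha}(y)|x-y|^{-\alpha}dy\big)w_{\mu',\xi'}^{5-\alpha}$ exactly reconstitutes $\mathcal{I}'_\lambda(V+\phi)[\varphi]$. Consequently $\phi_{\mu',\xi'}$ being a solution of \eqref{zhuan} in $K^\perp_{\mu',\xi'}$ is equivalent to $\Pi^\perp_{\mu',\xi'}\mathcal{I}'_\lambda(V+\phi_{\mu',\xi'})=0$, i.e. to the existence of constants $c_0,\dots,c_3$ (depending on $\mu',\xi'$) with
\begin{equation*}
  \mathcal{I}'_\lambda(V+\phi_{\mu',\xi'})=\sum_{j=0}^{3}c_j Z_j,\qquad Z_0:=\frac{\partial V}{\partial\mu'},\quad Z_j:=\frac{\partial V}{\partial\xi'_j}\ \ (j=1,2,3).
\end{equation*}

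Next I would differentiate $\tilde{\mathcal{I}}_\lambda(\mu',\xi')=\mathcal{I}_\lambda(V+\phi_{\mu',\xi'})$. Since $\phi_{\mu',\xi'}$ is $C^1$ in $(\mu',\xi')$ by Lemma \ref{ti} and $\mathcal{I}_\lambda$ is $C^1$ on $H_0^1(\Omega_\varepsilon)$, the chain rule yields, for $k\in\{0,1,2,3\}$ (with $\partial_0:=\partial_{\mu'}$, $\partial_k:=\partial_{\xi'_k}$ and $\partial_k V=Z_k$),
\begin{equation*}
  \partial_k\tilde{\mathcal{I}}_\lambda=\big\langle\mathcal{I}'_\lambda(V+\phi_{\mu',\xi'}),\,Z_k+\partial_k\phi_{\mu',\xi'}\big\rangle=\sum_{j=0}^{3}c_j\big(\langle Z_j,Z_k\rangle+\langle Z_j,\partial_k\phi_{\mu',\xi'}\rangle\big).
\end{equation*}
Hence $(\mu',\xi')$ is a critical point of $\tilde{\mathcal{I}}_\lambda$ precisely when $(c_0,\dots,c_3)$ is annihilated by the $4\times4$ matrix $\mathcal{G}+\mathcal{P}$ with $\mathcal{G}_{kj}=\langle Z_j,Z_k\rangle$ and $\mathcal{P}_{kj}=\langle Z_j,\partial_k\phi_{\mu',\xi'}\rangle$; if this matrix is invertible then $c_0=\dots=c_3=0$, so $\mathcal{I}'_\lambda(V+\phi_{\mu',\xi'})=0$, i.e. $V+\phi_{\mu',\xi'}$ is a critical point of $\mathcal{I}_\lambda$. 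The converse implication is immediate, since $\mathcal{I}'_\lambda(V+\phi_{\mu',\xi'})=0$ forces $\partial_k\tilde{\mathcal{I}}_\lambda=\langle\mathcal{I}'_\lambda(V+\phi_{\mu',\xi'}),\partial_k(V+\phi_{\mu',\xi'})\rangle=0$ for all $k$.

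The crux, therefore, is the invertibility of $\mathcal{G}+\mathcal{P}$ uniformly for small $\varepsilon$, and this is the step I expect to be the main obstacle. Splitting $Z_j=\partial_j w_{\mu',\xi'}+\partial_j[\varepsilon^{1/2}\pi_{\mu,\xi}(\varepsilon\,\cdot\,)]$ and invoking the bounds \eqref{gu3}, the $\pi$-parts contribute only $O(\varepsilon)$ to every entry of $\mathcal{G}$; what remains, up to negligible tails, is the Gram matrix of $\{\partial_{\mu'}w_{\mu',\xi'},\partial_{\xi'_1}w_{\mu',\xi'},\partial_{\xi'_2}w_{\mu',\xi'},\partial_{\xi'_3}w_{\mu',\xi'}\}$ in $D^{1,2}(\mathbb{R}^3)$, which by the radial symmetry of $w_{1,0}$ is diagonal and, by the scaling of the Dirichlet integral under $z=(x-\xi')/\mu'$, has diagonal entries bounded above and below by positive constants depending only on $\delta$ (recall $\mu'\in(\delta,\delta^{-1})$); hence $\mathcal{G}$ is uniformly invertible. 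For $\mathcal{P}$, differentiating the orthogonality relations $\langle\phi_{\mu',\xi'},Z_j\rangle=0$ (which hold because $\phi_{\mu',\xi'}\in K^\perp_{\mu',\xi'}$) with respect to $\mu'$ or $\xi'_k$ gives $\langle\partial_k\phi_{\mu',\xi'},Z_j\rangle=-\langle\phi_{\mu',\xi'},\partial_k Z_j\rangle$; since $\|\phi_{\mu',\xi'}\|_{H_0^1(\Omega_\varepsilon)}\leq C\varepsilon$ by Proposition \ref{fixed} and $\|\partial_k Z_j\|_{H_0^1(\Omega_\varepsilon)}=O(1)$ uniformly in small $\varepsilon$ (again by scaling for the $w$-part and \eqref{gu3} for the $\pi$-part), we obtain $\mathcal{P}=O(\varepsilon)$. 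Thus $\mathcal{G}+\mathcal{P}$ is invertible for $\varepsilon$ small, which closes the argument. Note that the very existence and the $C^1$-smallness of $\phi_{\mu',\xi'}$ underpinning these estimates ultimately rely on the non-degeneracy in Lemma \ref{ker}, through Lemma \ref{xian} and Proposition \ref{fixed}.
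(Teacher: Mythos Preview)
Your argument is correct and is the standard Lyapunov--Schmidt reduction proof of this equivalence. The paper itself does not give a proof but simply refers to \cite[Lemma 3.2]{YZ}, whose argument follows the same route you describe.
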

In the following lemma, we find an expansion for $\tilde{\mathcal{I}}_\lambda(\mu',\xi')$.
\begin{lemma}\label{fiex}
Under the conditions of Lemma \ref{non}, the following expansion holds:%
\begin{equation*}
  \tilde{\mathcal{I}}_\lambda(\mu',\xi')=\mathcal{I}_\lambda(V)+\varepsilon^2 \theta(\mu',\xi'),
\end{equation*}
where %
\begin{equation*}
  |\theta|+|\nabla_{\mu',\xi'}\theta|+|\nabla_{\mu',\xi'}\partial _{\mu'}\theta|\leq C.
\end{equation*}
\end{lemma}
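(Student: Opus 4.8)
The plan is to expand $\mathcal{I}_\lambda(V+\phi_{\mu',\xi'})$ along the segment $t\mapsto V+t\phi_{\mu',\xi'}$, exploiting the algebraic relations among $L$, $N$ and $E$. Write $\phi=\phi_{\mu',\xi'}$ and identify $L(\phi)$, $N(\phi)$, $E$ with elements of $H_0^1(\Omega_\varepsilon)$ via Riesz representation, as is already done implicitly when their $H_0^1$–norms are estimated in Lemmas \ref{non} and \ref{err}. Comparing the definitions of $L(\phi)$, $N(\phi)$, $E$ with \eqref{propro''}, one checks the identity
\[
\mathcal{I}_\lambda'(V+t\phi)=-E+tL(\phi)-N(t\phi),\qquad t\in[0,1],
\]
whose three ingredients are: $\mathcal{I}_\lambda'(V)=-E$ by \eqref{propro''}; $L$ is precisely the linearization of $\mathcal{I}_\lambda'$ at $V$; and $N$ collects exactly the remaining superquadratic terms. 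Integrating $t\mapsto\langle\mathcal{I}_\lambda'(V+t\phi),\phi\rangle$ over $[0,1]$ then yields the exact formula
\[
\tilde{\mathcal{I}}_\lambda(\mu',\xi')-\mathcal{I}_\lambda(V)=-\langle E,\phi\rangle+\tfrac{1}{2}\langle L(\phi),\phi\rangle-\int_0^1\langle N(t\phi),\phi\rangle\,dt .
\]

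Next I would estimate the three terms on the right. Proposition \ref{fixed} gives $\|\phi\|_{H_0^1(\Omega_\varepsilon)}\le C\varepsilon$, so Lemma \ref{err} yields $|\langle E,\phi\rangle|\le\|E\|_{H_0^1(\Omega_\varepsilon)}\|\phi\|_{H_0^1(\Omega_\varepsilon)}\le C\varepsilon^2$, while Lemma \ref{non} yields $|\langle N(t\phi),\phi\rangle|\le\|N(t\phi)\|_{H_0^1(\Omega_\varepsilon)}\|\phi\|_{H_0^1(\Omega_\varepsilon)}\le C\|\phi\|_{H_0^1(\Omega_\varepsilon)}^3\le C\varepsilon^3$. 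For the quadratic term I would use that $\phi\in K_{\mu',\xi'}^\perp$ solves \eqref{zhuan}, whence $\langle L(\phi),\phi\rangle=\langle\Pi_{\mu',\xi'}^\perp L(\phi),\phi\rangle=\langle\Pi_{\mu',\xi'}^\perp(N(\phi)+E),\phi\rangle=\langle N(\phi)+E,\phi\rangle$, which is again $O(\varepsilon^2)$ by the two previous bounds. Setting $\theta:=\varepsilon^{-2}\big(\tilde{\mathcal{I}}_\lambda(\mu',\xi')-\mathcal{I}_\lambda(V)\big)$, this gives $|\theta|\le C$ uniformly for $(\mu',\xi')$ in the admissible range of Lemma \ref{non}.

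For the derivative bounds I would differentiate the exact formula above with respect to $\mu'$ and $\xi'_i$. Every resulting term is a pairing of two factors, at least one of which carries a gain $O(\varepsilon)$: the derivatives of $\phi$ are controlled by Lemma \ref{ti} (so $\|\nabla_{\mu',\xi'}\phi\|_{H_0^1(\Omega_\varepsilon)}+\|\nabla_{\mu',\xi'}\partial_{\mu'}\phi\|_{H_0^1(\Omega_\varepsilon)}\le C\varepsilon$), those of $N(\phi)$ by Lemma \ref{non'}, and those of $E$ satisfy $\|\nabla_{\mu',\xi'}E\|_{H_0^1(\Omega_\varepsilon)}+\|\nabla_{\mu',\xi'}\partial_{\mu'}E\|_{H_0^1(\Omega_\varepsilon)}\le C\varepsilon$, which one proves exactly as Lemma \ref{err}: the most singular parts of $V=w_{\mu',\xi'}+\varepsilon^{1/2}\pi_{\mu,\xi}(\varepsilon\,\cdot\,)$ and of $w_{\mu',\xi'}$ cancel in $E$ and in its $(\mu',\xi')$–derivatives — using \eqref{gu1}, \eqref{gu2}, \eqref{gu3} — leaving only terms built from $\varepsilon^{1/2}\pi_{\mu,\xi}(\varepsilon\,\cdot\,)=O(\varepsilon)$ and its derivatives, which are $O(\varepsilon)$ and $O(\varepsilon^2)$. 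Differentiating likewise the identity $\langle L(\phi),\phi\rangle=\langle N(\phi)+E,\phi\rangle$ and combining these facts with $\|\phi\|_{H_0^1(\Omega_\varepsilon)}\le C\varepsilon$, $\|N(\phi)\|_{H_0^1(\Omega_\varepsilon)}\le C\varepsilon^2$, $\|E\|_{H_0^1(\Omega_\varepsilon)}\le C\varepsilon$ keeps every differentiated term of order $\varepsilon^2$, hence $|\nabla_{\mu',\xi'}\theta|+|\nabla_{\mu',\xi'}\partial_{\mu'}\theta|\le C$.

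The main obstacle is this last step: one must differentiate the nonlocal bilinear pairings $\langle N(t\phi),\phi\rangle$ and $\langle L(\phi),\phi\rangle$ in the concentration parameters and verify carefully that no derivative produces a term worse than $O(\varepsilon^2)$, in particular that the derivative falling on $\phi$ inside a Choquard-type bilinear form evaluated against $\phi$ (or $\partial_{\mu'}\phi$) is absorbed by the gains in Lemmas \ref{ti} and \ref{non'}. None of the individual estimates is deep — each is a Hardy–Littlewood–Sobolev/Hölder computation of the kind already performed throughout Section \ref{Reduction} — but organizing them so that the $\varepsilon$–gains land on the correct factors, together with exploiting the cancellation structure of $E$, is the delicate part.
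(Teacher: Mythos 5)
Your proof is correct and takes essentially the same approach as the paper: a Taylor expansion around $V$ exploiting the orthogonality $\langle L(\phi)-N(\phi)-E,\phi\rangle=0$ coming from the projected equation \eqref{zhuan}, together with the HLS/H\"older estimates of Lemmas \ref{non}, \ref{err}, \ref{ti}, \ref{non'}. The paper anchors the second-order Taylor remainder at $V+\phi$ (using $D\mathcal{I}_\lambda(V+\phi)[\phi]=0$) and expands $D^2\mathcal{I}_\lambda(V+s\phi)[\phi^2]$ directly, while you integrate the equivalent first-derivative identity $\mathcal{I}_\lambda'(V+t\phi)=-E+tL(\phi)-N(t\phi)$ and then substitute $\langle L(\phi),\phi\rangle=\langle N(\phi)+E,\phi\rangle$; your explicit note that the bound $\|\nabla_{\mu',\xi'}E\|_{H_0^1(\Omega_\varepsilon)}=O(\varepsilon)$ is needed for the derivative estimate and follows by the same cancellation as in Lemma \ref{err} is a detail the paper leaves implicit.
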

\begin{proof}
By Proposition \ref{fixed}, we know $D \mathcal{I}_\lambda(V+\phi_{\mu',\xi'})[\phi_{\mu',\xi'}]=0$. A Taylor expansions gives
\begin{align*}
 & \mathcal{I}_\lambda(V+\phi_{\mu',\xi'})-\mathcal{I}_\lambda(V)\\
  =&-\int_{0}^1sD^2 \mathcal{I}_\lambda(V+s\phi_{\mu',\xi'})[\phi^2_{\mu',\xi'}]ds\\
  =&-\int_{0}^1s\Big(\int_{\Omega_\varepsilon}\big[N(\phi_{\mu',\xi'})+E\big]\phi _{\mu',\xi'} dx+(6-\alpha)\int_{\Omega_\varepsilon}\int_{\Omega_\varepsilon}\frac{V^{5-\alpha}(y)\phi_{\mu',\xi'}(y)V^{5-\alpha}(x)\phi_{\mu',\xi'}(x)}{|x-y|^\alpha}dxdy\\
  &-(6-\alpha)\int_{\Omega_\varepsilon}\int_{\Omega_\varepsilon}\frac{(V+s\phi_{\mu',\xi'})^{5-\alpha}(y)\phi_{\mu',\xi'}(y)(V+s\phi_{\mu',\xi'})^{5-\alpha}(x)\phi_{\mu',\xi'}(x)}{|x-y|^\alpha}dxdy\\
  &+(5-\alpha)\int_{\Omega_\varepsilon}\int_{\Omega_\varepsilon}\frac{V^{6-\alpha}(y)V^{4-\alpha}(x)\phi^2_{\mu',\xi'}(x)}{|x-y|^\alpha}dxdy\\
  &-(5-\alpha)\int_{\Omega_\varepsilon}\int_{\Omega_\varepsilon}\frac{(V+s\phi_{\mu',\xi'})^{6-\alpha}(y)(V+s\phi_{\mu',\xi'})^{4-\alpha}(x)\phi^2_{\mu',\xi'}(x)}{|x-y|^\alpha}dxdy
  \Big)ds.
\end{align*}
From Lemmas \ref{non}, \ref{err}, and Proposition \ref{fixed}, using \eqref{HLS},  the H\"{o}lder and Sobolev inequalities, we obtain
\begin{equation*}
  \Big|\int_{\Omega_\varepsilon}\big[N(\phi_{\mu',\xi'})+E\big]\phi _{\mu',\xi'} dx\Big|\leq C\big(\|N(\phi_{\mu',\xi'})\|_{H_0^1(\Omega_\varepsilon)}+\|E\|_{H_0^1(\Omega_\varepsilon)}\big)\|\phi_{\mu',\xi'}\|_{H_0^1(\Omega_\varepsilon)}\leq C\varepsilon^2,
\end{equation*}
\begin{align*}
  &\bigg|\int_{\Omega_\varepsilon}\int_{\Omega_\varepsilon}\frac{V^{5-\alpha}(y)\phi_{\mu',\xi'}(y)V^{5-\alpha}(x)\phi_{\mu',\xi'}(x)}{|x-y|^\alpha}dxdy\\
  &-\int_{\Omega_\varepsilon}\int_{\Omega_\varepsilon}\frac{(V+s\phi_{\mu',\xi'})^{5-\alpha}(y)\phi_{\mu',\xi'}(y)(V+s\phi_{\mu',\xi'})^{5-\alpha}(x)\phi_{\mu',\xi'}(x)}{|x-y|^\alpha}dxdy\bigg|\\
  \leq &C \bigg|\int_{\Omega_\varepsilon}\int_{\Omega_\varepsilon}\frac{w_{\mu',\xi'}^{5-\alpha}(y)\phi_{\mu',\xi'}(y)w_{\mu',\xi'}^{5-\alpha}(x)\phi_{\mu',\xi'}(x)}{|x-y|^\alpha}dxdy\bigg|\\
  \leq& C\|\phi_{\mu',\xi'}\|^2_{H_0^1(\Omega_\varepsilon)}\leq C\varepsilon^2,
\end{align*}
and
\begin{align*}
  \bigg|
  &\int_{\Omega_\varepsilon}\int_{\Omega_\varepsilon}\frac{V^{6-\alpha}(y)V^{4-\alpha}(x)\phi^2_{\mu',\xi'}(x)}{|x-y|^\alpha}dxdy\\
  &-\int_{\Omega_\varepsilon}\int_{\Omega_\varepsilon}\frac{(V+s\phi_{\mu',\xi'})^{6-\alpha}(y)(V+s\phi_{\mu',\xi'})^{4-\alpha}(x)\phi^2_{\mu',\xi'}(x)}{|x-y|^\alpha}dxdy
  \bigg|\\
  \leq &C \bigg|\int_{\Omega_\varepsilon}\int_{\Omega_\varepsilon}\frac{w_{\mu',\xi'}^{6-\alpha}(y)w_{\mu',\xi'}^{4-\alpha}(x)\phi^2_{\mu',\xi'}(x)}{|x-y|^\alpha}dxdy\bigg|\\
  \leq& C\|\phi_{\mu',\xi'}\|^2_{H_0^1(\Omega_\varepsilon)}\leq C\varepsilon^2.
\end{align*}
So we have
\begin{equation*}
  \tilde{\mathcal{I}}_\lambda(\mu',\xi')=\mathcal{I}_\lambda(V)+O(\varepsilon^2).
\end{equation*}

Observe that
\begin{align*}
 & \nabla_{\mu',\xi'}\big[\mathcal{I}_\lambda(V+\phi_{\mu',\xi'})-\mathcal{I}_\lambda(V)\big]\\
  =&-\int_{0}^1s\Big[\int_{\Omega_\varepsilon}\big[N(\phi_{\mu',\xi'})+E\big]\nabla_{\mu',\xi'}\phi _{\mu',\xi'} dx+\int_{\Omega_\varepsilon}\phi _{\mu',\xi'}\nabla_{\mu',\xi'}N(\phi_{\mu',\xi'}) dx\\
  &+(6-\alpha)\int_{\Omega_\varepsilon}\int_{\Omega_\varepsilon}\frac{\nabla_{\mu',\xi'}\big[V^{5-\alpha}(y)\phi_{\mu',\xi'}(y)V^{5-\alpha}(x)\phi_{\mu',\xi'}(x)\big]}{|x-y|^\alpha}dxdy\\
  &-(6-\alpha)\int_{\Omega_\varepsilon}\int_{\Omega_\varepsilon}\frac{\nabla_{\mu',\xi'}\big[(V+s\phi_{\mu',\xi'})^{5-\alpha}(y)\phi_{\mu',\xi'}(y)(V+s\phi_{\mu',\xi'})^{5-\alpha}(x)\phi_{\mu',\xi'}(x)\big]}{|x-y|^\alpha}dxdy\\
  &+(5-\alpha)\int_{\Omega_\varepsilon}\int_{\Omega_\varepsilon}\frac{\nabla_{\mu',\xi'}\big[V^{6-\alpha}(y)V^{4-\alpha}(x)\phi^2_{\mu',\xi'}(x)\big]}{|x-y|^\alpha}dxdy\\
  &-(5-\alpha)\int_{\Omega_\varepsilon}\int_{\Omega_\varepsilon}\frac{\nabla_{\mu',\xi'}\big[(V+s\phi_{\mu',\xi'})^{6-\alpha}(y)(V+s\phi_{\mu',\xi'})^{4-\alpha}(x)\phi^2_{\mu',\xi'}(x)\big]}{|x-y|^\alpha}dxdy
  \Big]ds.
\end{align*}
By Lemmas \ref{non}-\ref{err}, \ref{ti}-\ref{non'}, and Proposition \ref{fixed}, using \eqref{HLS}, \eqref{gu2}, \eqref{gu1},  the H\"{o}lder and Sobolev inequalities, we get
\begin{align*}
  &\big|\nabla_{\mu',\xi'}\big[\mathcal{I}_\lambda(V+\phi_{\mu',\xi'})-\mathcal{I}_\lambda(V)\big]\big|\\ \leq & C\big(\|N(\phi_{\mu',\xi'})\|_{H_0^1(\Omega_\varepsilon)}+\|E\|_{H_0^1(\Omega_\varepsilon)}\big)\|\nabla_{\mu',\xi'}\phi_{\mu',\xi'}\|_{H_0^1(\Omega_\varepsilon)}
  +C\|\phi_{\mu',\xi'}\|_{H_0^1(\Omega_\varepsilon)}\|\nabla_{\mu',\xi'}N(\phi_{\mu',\xi'})\|_{H_0^1(\Omega_\varepsilon)}\\
  &+C\|\phi_{\mu',\xi'}\|^2_{H_0^1(\Omega_\varepsilon)}+C\|\phi_{\mu',\xi'}\|_{H_0^1(\Omega_\varepsilon)}\|\nabla_{\mu',\xi'}\phi_{\mu',\xi'}\|_{H_0^1(\Omega_\varepsilon)}\leq C\varepsilon^2.
\end{align*}
A similar computation yields the result.
\end{proof}
\noindent {\bf Proof of Theorem \ref{th}.} Let us choose $\mu$ as in \eqref{mu},
since $\mu'\in (\delta,\delta^{-1})$ for some $\delta>0$, by Lemma \ref{fiex}, we have
\begin{equation*}
  \tilde{\mathcal{I}}_\lambda(\mu',\xi')=\mathcal{I}_\lambda(V)+g_\lambda^2 \theta(\mu',\xi'),
\end{equation*}
with $|\theta|+|\nabla_{\mu',\xi'}\theta|+|\nabla_{\mu',\xi'}\partial _{\mu'}\theta|\leq C$. Define
\begin{equation*}
\Psi_\lambda(\Lambda,\xi)=\tilde{\mathcal{I}}_\lambda(\mu',\xi'),
\end{equation*}
then we have
\begin{equation*}
  \Psi_\lambda(\Lambda,\xi)=\mathcal{I}_\lambda(V)+g_\lambda^2 \theta(\mu',\xi')=\mathcal{J}_\lambda(U_{\mu,\xi})+g_\lambda^2 \theta(\mu',\xi').
\end{equation*}
In view of Lemma \ref{point}, $\Psi_\lambda$ has a critical point. This concludes the proof.
\qed

\section{Proof of Theorem \ref{th1}}\label{Numan}
Arguing as in Section \ref{Pre},
we define $\Theta_{\mu,\xi}$ to be the unique solution of the problem
\begin{align}\label{pi1}
 \left\{
  \begin{array}{ll}
  -\Delta \Theta_{\mu,\xi}=-\lambda\pi_{\mu,\xi}-\lambda w_{\mu,\xi}-\displaystyle\Big(\int_{\mathbb{R}^3\backslash \Omega}\frac{w_{\mu,\xi}^{6-\alpha}(y)}{|x-y|^\alpha}dy\Big)w_{\mu,\xi}^{5-\alpha},
  \ \  &\mbox{in}\ \Omega,\\
 \displaystyle \frac{\partial \Theta_{\mu,\xi}}{\partial \nu}=-\frac{\partial w_{\mu,\xi}}{\partial \nu},
  \ \  &\mbox{on}\ \partial \Omega.
    \end{array}
    \right.
  \end{align}
Fix a small  positive number $\mu$ and a point $\xi\in \Omega$, we consider a first approximation of the solution of the form:
\begin{equation*}
  U_{\mu,\xi}(x)=w_{\mu,\xi}(x)+\Theta_{\mu,\xi}(x).
\end{equation*}
Then $U=U_{\mu,\xi}$ satisfies the equation
\begin{align*}
 \left\{
  \begin{array}{ll}
  -\Delta U=\displaystyle\Big(\int_{\Omega}\frac{w_{\mu,\xi}^{6-\alpha}(y)}{|x-y|^\alpha}dy\Big)w_{\mu,\xi}^{5-\alpha}-\lambda U,
  \ \  &\mbox{in}\ \Omega,\\
  \displaystyle\frac{\partial U}{\partial \nu}=0,
  \ \  &\mbox{on}\ \partial \Omega.
    \end{array}
    \right.
  \end{align*}
Moreover, using estimates contained in Section \ref{Energy} and \cite[Lemmas 3.1 and 3,2]{dMRW}, one can prove the following results.
\begin{lemma}\label{zhan}
For any $\sigma>0$, as $\mu\rightarrow0$, the following expansion holds:
\begin{equation*}
  \mu^{-1/2}\Theta_{\mu,\xi}(x)=-4\pi3^{1/4}H^\lambda(x,\xi)-\mu \mathcal{D}_0(\mu^{-1}({x-\xi}))+\mu^{2-\sigma}\theta(\mu,x,\xi),
\end{equation*}
where for $i=0,1$, $j=0,1,2$, $i+j\leq 2$, the function $\mu^j\frac{\partial ^{i+j}}{\partial \xi^i \partial \mu^j}\theta(\mu,x,\xi)$ is bounded uniformly on $x\in \Omega$, all small $\mu$ and $\xi$ in compact subsets of $\Omega$.
\end{lemma}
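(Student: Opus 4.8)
My plan is to adapt, essentially line by line, the proof of Lemma~\ref{piex} to the Neumann setting, the only genuinely new ingredient being the bookkeeping of the Neumann trace of the ansatz. First I would treat the exterior nonlocal term in \eqref{pi1} exactly as in the opening of the proof of Lemma~\ref{piex}: applying \eqref{HLS} together with the H\"older and Sobolev inequalities shows it has $(H^1(\Omega))'$-norm $O(\mu^{(6-\alpha)/2})$, so that (rewriting \eqref{pi1}) $\Theta_{\mu,\xi}$ satisfies
\begin{equation*}
-\Delta\Theta_{\mu,\xi}+\lambda\Theta_{\mu,\xi}=-\lambda w_{\mu,\xi}+O(\mu^{(6-\alpha)/2})\ \text{ in }\Omega,\qquad \frac{\partial\Theta_{\mu,\xi}}{\partial\nu}=-\frac{\partial w_{\mu,\xi}}{\partial\nu}\ \text{ on }\partial\Omega.
\end{equation*}
As in the Dirichlet case I set $\mathcal{D}_1(x)=\mu\mathcal{D}_0(\mu^{-1}(x-\xi))$; by the defining equation of $\mathcal{D}_0$ it solves $-\Delta\mathcal{D}_1=\lambda\big(\mu^{-1/2}w_{\mu,\xi}-4\pi3^{1/4}\Gamma(\cdot-\xi)\big)$, and since $\mathcal{D}_0(z)\sim|z|^{-1}\log|z|$ while $\xi$ lies in a compact subset of $\Omega$, it satisfies $\mathcal{D}_1=O(\mu^2\log\mu)$ and $\nabla\mathcal{D}_1=O(\mu^2\log\mu)$ on $\partial\Omega$.

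Next I would put $S_1(x)=\mu^{-1/2}\Theta_{\mu,\xi}(x)+4\pi3^{1/4}H^\lambda(x,\xi)+\mathcal{D}_1(x)$ and verify, using the equation above for $\Theta_{\mu,\xi}$, the defining problem for $H^\lambda(\cdot,\xi)$ together with the symmetry of $G^\lambda$, the equation for $\mathcal{D}_1$, and the elementary identity $\mu^{-1/2}w_{\mu,\xi}-4\pi3^{1/4}\Gamma(\cdot-\xi)=3^{1/4}\big((\mu^2+|\cdot-\xi|^2)^{-1/2}-|\cdot-\xi|^{-1}\big)$, that the leading singular parts cancel, leaving
\begin{equation*}
-\Delta S_1+\lambda S_1=\lambda\mathcal{D}_1+O(\mu^{(5-\alpha)/2})\ \text{ in }\Omega,\qquad \frac{\partial S_1}{\partial\nu}=O(\mu^2\log\mu)\ \text{ on }\partial\Omega;
\end{equation*}
the boundary estimate follows because $\partial_\nu\Theta_{\mu,\xi}=-\partial_\nu w_{\mu,\xi}$ and $\partial_\nu H^\lambda(\cdot,\xi)=\partial_\nu\Gamma(\cdot-\xi)$ on $\partial\Omega$, so that $\partial_\nu S_1=-\partial_\nu\big(\mu^{-1/2}w_{\mu,\xi}-4\pi3^{1/4}\Gamma(\cdot-\xi)\big)+\partial_\nu\mathcal{D}_1=O(\mu^2)+O(\mu^2\log\mu)$.

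The case $i=j=0$ then follows from elliptic regularity for the Neumann problem associated with $-\Delta+\lambda$: for $p>3$ one has $\int_\Omega|\mathcal{D}_1|^p\le\mu^{p+3}\int_{\mathbb{R}^3}|\mathcal{D}_0|^p$, hence the right-hand side above has $L^p(\Omega)$-norm $O(\mu^{(p+3)/p})+O(\mu^{(5-\alpha)/2})$, and since $\alpha\in(0,3)$ the $W^{2,p}$-estimate (see \cite{GT}) together with $W^{2,p}(\Omega)\hookrightarrow L^\infty(\Omega)$ gives $\|S_1\|_{L^\infty(\Omega)}=O(\mu^{2-\sigma})$ for every $\sigma>0$, uniformly for $\xi$ in compact subsets of $\Omega$; this is precisely the asserted expansion with $\theta(\mu,x,\xi):=\mu^{\sigma-2}S_1(x)$. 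For the derivative bounds I would differentiate these identities exactly as for $S_2=\partial_\xi S_1$ and $S_3=\mu\partial_\mu S_1$ in the proof of Lemma~\ref{piex}: they solve Neumann problems for $-\Delta+\lambda$ with right-hand sides $-\lambda\nabla\mathcal{D}_0(\mu^{-1}(\cdot-\xi))$, respectively $\lambda\mu(\mathcal{D}_0+z\cdot\nabla\mathcal{D}_0)(\mu^{-1}(\cdot-\xi))+O(\mu^{(5-\alpha)/2})$, and with boundary data still $O(\mu^2\log\mu)$, so the same $L^p$/elliptic argument yields $\|S_2\|_{L^\infty(\Omega)}+\|S_3\|_{L^\infty(\Omega)}=O(\mu^{2-\sigma})$, and applying $\mu\partial_\mu$ once more to the equations for $S_2$ and $S_3$ covers the cases $i+j=2$, just as in \cite[Lemmas 3.1, 3.2]{dMRW}.

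The interior cancellations, the choice of $\mathcal{D}_0$, and the uniformity and differentiability bookkeeping are identical to the Dirichlet computation, so the part I expect to need the most care is the treatment of the \emph{Neumann} trace of the ansatz: one must check carefully that $\partial_\nu S_1$ and all of its $\mu,\xi$-derivatives are of the higher order $O(\mu^2\log\mu)$, and one must invoke the $L^p$ and Schauder theory for the Neumann Laplacian (as in \cite[Lemmas 3.1, 3.2]{dMRW}) rather than for the Dirichlet one.
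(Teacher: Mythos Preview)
Your proposal is correct and follows precisely the route the paper indicates: the paper's proof of this lemma consists solely of the sentence ``We argue as in the proof of Lemma~\ref{piex},'' and you have carried out exactly that adaptation, with the correct sign change in front of $\mathcal{D}_1$ (reflecting the operator $-\Delta+\lambda$ in place of $-\Delta-\lambda$) and the appropriate replacement of the Dirichlet trace by the Neumann trace, invoking \cite[Lemmas 3.1, 3.2]{dMRW} for the Neumann elliptic estimates just as the paper's preamble to Section~\ref{Numan} suggests.
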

\begin{proof}
We argue as in the proof of Lemma \ref{piex}.
\end{proof}
\begin{lemma}
For any $\sigma>0$, as $\mu\rightarrow0$, the following expansion holds:
\begin{equation*}
  \mathcal{J}_\lambda(U_{\mu,\xi})=a_0+a_1\mu g^\lambda(\xi)-a_2\lambda\mu^2 -a_3\mu^2 (g^\lambda)^2(\xi)+\mu^{\frac{5}{2}-\sigma}\theta(\mu,\xi),
\end{equation*}
where for $i=0,1$, $j=0,1,2$, $i+j\leq 2$, the function $\mu^j\frac{\partial ^{i+j}}{\partial \xi^i \partial \mu^j}\theta(\mu,\xi)$ is bounded uniformly on all small $\mu$ and $\xi$ in compact subsets of $\Omega$. The $a_j$'s are explicit constants, given by \eqref{changshu}.
\end{lemma}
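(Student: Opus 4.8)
The strategy is to repeat the proof of Proposition~\ref{enex} almost verbatim, keeping track of the two structural changes between the Neumann problem \eqref{L} and the Dirichlet problem \eqref{propro}: the boundary condition is now $\partial U_{\mu,\xi}/\partial\nu=0$ on $\partial\Omega$ rather than $U_{\mu,\xi}=0$, and the linear term appears with the opposite sign ($-\lambda u$ in place of $+\lambda u$), so the energy functional $\mathcal{J}_\lambda$ associated with \eqref{L} carries $+\tfrac{\lambda}{2}\int_\Omega u^2$ instead of $-\tfrac{\lambda}{2}\int_\Omega u^2$. Write $U_{\mu,\xi}=w_{\mu,\xi}+\Theta_{\mu,\xi}$, with $w_{\mu,\xi}$ solving \eqref{lim2} and $\Theta_{\mu,\xi}$ solving \eqref{pi1}, and decompose $\mathcal{J}_\lambda(U_{\mu,\xi})=I+II+III+IV+V+VI$ exactly as in the proof of Proposition~\ref{enex}, with $\Theta_{\mu,\xi}$ playing the role of $\pi_{\mu,\xi}$.

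First I would handle the boundary terms. Since $w_{\mu,\xi}$ is a bubble that does not obey a homogeneous Neumann condition on $\partial\Omega$, one integrates by parts moving derivatives onto $\Theta_{\mu,\xi}$, which carries the adapted data $\partial_\nu\Theta_{\mu,\xi}=-\partial_\nu w_{\mu,\xi}$ on $\partial\Omega$; the boundary integrals produced by $I$, $II$, $III$, $IV$ then cancel in the same pattern as in the Dirichlet computation, and $a_0=\tfrac{5-\alpha}{2(6-\alpha)}S_{H,L}^{(6-\alpha)/(5-\alpha)}$ comes from the interior integral $\tfrac{5-\alpha}{2(6-\alpha)}\int_{\mathbb{R}^3}w_{\mu,\xi}^6$, the tails over $\mathbb{R}^3\setminus\Omega$ being $O(\mu^3)$.

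Next I would expand the lower-order terms using Lemma~\ref{zhan}: $\mu^{-1/2}\Theta_{\mu,\xi}$ is the expansion for $\mu^{-1/2}\pi_{\mu,\xi}$ with $H_\lambda$ replaced by $H^\lambda$, an overall sign change on the $\mu\mathcal{D}_0$-corrector, and the sign flip on $\lambda$. Combining this with the analogues of \cite[Lemma~2.1]{dMRW} (the expansions of $\int_\Omega w_{\mu,\xi}^5\Theta_{\mu,\xi}$ and $\int_\Omega w_{\mu,\xi}^4\Theta_{\mu,\xi}^2$ in terms of $g^\lambda$, with $C^2$-controlled remainders $R_1,R_2=O(\mu^{3-\sigma})$ and the $\lambda\mu^2$-term carrying the sign opposite to the Dirichlet case), and substituting into $IV$ and $V$, I obtain the $a_1\mu g^\lambda(\xi)$, $-a_2\lambda\mu^2$ and $-a_3\mu^2(g^\lambda)^2(\xi)$ contributions; the sign of the $a_2$-term is precisely the one inherited from the $+\tfrac{\lambda}{2}\int_\Omega u^2$ term of $\mathcal{J}_\lambda$ (equivalently, from the signs in Lemma~\ref{zhan}), while $a_1$ and $a_3$ keep their Dirichlet signs since both pieces of $a_3$ enter as in \eqref{changshu}.

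Finally I would estimate the purely nonlocal remainders: the term $VI$ and the remainder $R_3$ arising from the double-convolution pieces of $V$ are treated as in the proof of Proposition~\ref{enex}, by passing to the rescaled variable $z=\mu^{-1}(x-\xi)$, applying \eqref{HLS} and \eqref{CS} together with the H\"{o}lder and Sobolev inequalities, and bounding $\mu^{-1/2}\Theta_{\mu,\xi}(\xi+\mu z)$ by Lemma~\ref{zhan}; all such remainders are $O(\mu^{5/2-\sigma})$ with the stated uniform control of $\mu^j\partial^{\,i+j}_{\xi^i\mu^j}$. Collecting, $\mathcal{J}_\lambda(U_{\mu,\xi})=a_0+a_1\mu g^\lambda(\xi)-a_2\lambda\mu^2-a_3\mu^2(g^\lambda)^2(\xi)+\mu^{5/2-\sigma}\theta(\mu,\xi)$ with the constants \eqref{changshu}. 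The one place that needs genuine care — not mechanical transcription — is verifying that the boundary-term cancellation survives the passage from Dirichlet to Neumann and that the flip $\lambda\mapsto-\lambda$, together with the sign flip of the $\mathcal{D}_0$-corrector in Lemma~\ref{zhan}, conspires to give exactly $-a_2\lambda\mu^2$ while leaving $a_1$ and $a_3$ untouched.
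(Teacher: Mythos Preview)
Your proposal is correct and matches the paper's own proof, which simply reads ``We argue as in the proof of Lemma~\ref{enex}, using Lemma~\ref{zhan}.'' You have faithfully spelled out what that entails --- the same $I+\cdots+VI$ decomposition, the boundary-term cancellation under the Neumann condition, the replacement of $H_\lambda$ by $H^\lambda$ via Lemma~\ref{zhan}, and the sign tracking that flips only the $a_2$ term --- with the one cosmetic caveat that the external input you cite as \cite[Lemma~2.1]{dMRW} is referenced in the paper as \cite[Lemmas~3.1 and 3.2]{dMRW}.
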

\begin{proof}
We argue as in the proof of Lemma \ref{enex}, using Lemma \ref{zhan}.
\end{proof}

We consider the situation $(a)$ of local maximizer in Theorem \ref{th1}
\begin{equation*}
  0=\sup\limits_{\mathcal{U}}g^{\lambda^0}>\sup\limits_{\partial \mathcal{U}}g^{\lambda^0}.
\end{equation*}
Then for $\lambda$ close to $\lambda^0$ and $\lambda>\lambda^0$, we have
\begin{equation*}
  \sup\limits_{ \mathcal{U}}g^{\lambda}>A(\lambda-\lambda^0),\quad A>0.
\end{equation*}
Define the shrinking set
\begin{equation*}
  \mathcal{U}^\lambda=\Big\{x\in \mathcal{U}:g^\lambda(x)>\frac{A}{2}(\lambda-\lambda^0)\Big\}.
\end{equation*}
Assume
$\lambda>\lambda^0$ is sufficiently close to $\lambda^0$, then $g^\lambda=\frac{A}{2}(\lambda-\lambda^0)$ on $\partial \mathcal{U}^\lambda$.

\medskip

\noindent {\bf Proof of Theorem \ref{th1}.} The proof is similar to that of Theorem \ref{th}, so we omit it.
\qed

\medskip
\subsection*{Acknowledgments}
The research has been supported by Natural Science Foundation of Chongqing, China (No. CSTB2024 NSCQ-LZX0038) and National Natural Science Foundation of China (No. 12371121).

\subsection*{Statements and Declarations}
{\bf Conflict of interest} On behalf of all authors, the corresponding author states that there
is no conflict of interest. The manuscript has no associated data.

\end{document}